\documentclass[12pt]{amsart}
\usepackage{amsmath}
\usepackage{amssymb}
\usepackage{amsthm}
\usepackage{cancel}
\usepackage{tcolorbox}
\usepackage[margin=1.2in]{geometry}
\usepackage{tikz,tikz-cd}
\usetikzlibrary{external}
\usepackage{cases}
\usepackage[]{mdframed}
\usepackage{url}
\usepackage{hyperref}

\newcommand{\N}{\mathbb{N}}
\newcommand{\Z}{\mathbb{Z}}

\newcommand{\R}{\mathbb{R}}

\newcommand{\cat}[1]{\mathrm{CAT}(#1)}
\newcommand{\isom}[1]{\mathrm{Isom}(#1)}

\newcounter{EQNR}
\renewcommand{\contentsname}{Contents\\
{\footnotesize\normalfont(A table of contents should normally not
be included)}}

\theoremstyle{plain}
\newtheorem{thm}{Theorem}[section]

\newtheorem{lem}[thm]{Lemma}
\newtheorem{prop}[thm]{Proposition}
\newtheorem{cor}[thm]{Corollary}

\newtheorem{conjecture}[thm]{Conjecture}

\theoremstyle{definition}
\newtheorem{dfn}{Definition}

\theoremstyle{remark}
\newtheorem{rem}{Remark}

\numberwithin{equation}{section}
\numberwithin{dfn}{section}
\numberwithin{thm}{section}
\numberwithin{rem}{section}

\title[Torsion and fixed-point rigidity]{ Torsion subgroups and fixed-point rigidity \\ in  CAT(0) geometry  }

\author{Jiankun Hou}
\address{Qiuzhen College, Tsinghua University, Beijing, P. R. China}
\email{houjk21@mails.tsinghua.edu.cn}

\author{Hiroyasu Izeki}
\address{Department of Mathematics, Keio University, Kohoku-ku, Yokohama 223-8522, Japan}
\email{izeki@math.keio.ac.jp}

\author{Ran Ji}
\address{Academy for Multidisciplinary Studies, Capital Normal University, Beijing 100048,
P. R. China}
\email{jiran.jbm@gmail.com}

\author{Anders Karlsson 
}
\address{Section de mathématiques, University of Geneva, rue du Conseil-Général 7-9, 1205 Geneva, Switzerland;  Mathematics Department, Uppsala University, Box 256, 751 05, Uppsala, Sweden}
\email{anders.karlsson@unige.ch}

\author{Yunhui Wu}
\address{
Tsinghua University \& BIMSA, Beijing 100084, P. R. China}
\email{yunhui\_wu@mail.tsinghua.edu.cn}

\date{\today}

\begin{document}

\begin{abstract}
We develop new methods for studying groups acting on CAT(0) spaces, which lead to several general structural results.

First, we prove that every torsion subgroup of a CAT(0) group is finite, resolving a question of Swenson from the 1990s. The proof is based on showing 
that random walks on any finitely generated torsion group with bounded exponent acting on a CAT(0) space have zero drift. 
This is then combined with the fixed-point rigidity that we develop.

Second, we show that any finitely generated torsion group of bounded exponent has a global fixed point whenever it acts properly by isometries on a CAT(0) space of bounded geometry, or, without the properness assumption, by isometries on a finite‑dimensional CAT(0) space.

Third, we establish a Kazhdan‑type rigidity principle that underlies many of our results: let $\Gamma$ be a finitely generated group such that every isometric action of $\Gamma$ on $\mathbb{R}^n$ has a fixed point. Then every fixed‑point‑free action of $\Gamma$ on a geodesically complete $n$-dimensional CAT(0) space of bounded geometry has joint minimal displacement uniformly bounded away from zero. In particular, almost fixed points imply a global fixed point. This applies in particular to groups with property (T), torsion groups, certain branch groups, and mapping class groups.

Fourth, we establish the following alternative for any finitely generated amenable group: either every action on a finite‑dimensional CAT(0) space has a global fixed point, or the group has non‑vanishing virtual first Betti number.

Further consequences include that finitely generated torsion groups cannot act without a global fixed point on geodesically complete CAT(0) spaces of bounded geometry that are either visibility spaces or have compact Tits boundary. The methods involve mechanisms that upgrade almost fixed points to genuine fixed points, and rely on Euclidean fixed‑point properties, scalings of actions by ultralimits, and random walks.
\end{abstract}
\maketitle


\section{Introduction}
\subsection{Main results} \label{ssec:mainresults}
Determining which finitely generated groups admit non-degenerate isometric actions on various classes of complete CAT(0) spaces is a central problem in Riemannian geometry and geometric group theory.
For example, the classical theorems of Preissmann and Myers in Riemannian geometry demonstrated how curvature strongly constrains the algebraic structure of fundamental groups. Together with Cartan's fixed point theorem, these results suggest a strong tension between torsion and nonpositive curvature, 
but the extent to which torsion groups can act on CAT(0) spaces has remained an important open problem. In group theory, an early example is Serre’s \emph{property} FA, the fixed-point property for actions on trees, which is closely related to amalgamated products.

The seminal rigidity theory of Mostow and Margulis concerns groups acting by isometries on symmetric spaces of nonpositive curvature. The representation-theoretic property (T) of Kazhdan plays an important role in that theory and in many other areas as well.
This property can in turn be reformulated as \emph{property} FH, the fixed point property on Hilbert spaces. 

In this paper, we develop a new set of methods for the study of isometric actions on spaces of nonpositive curvature. Two fundamental ingredients are random walks and the scaling of actions via ultralimits. 

A \emph{CAT(0) group} is a discrete group that admits a proper and cocompact action on a proper $\cat{0}$ space. The following theorem asserts that every infinite subgroup of a $\cat{0}$ group contains an infinite order element. 

\begin{thm} \label{cor:Swenson-intro}
     Every torsion subgroup of a $\cat{0}$ group is finite.
\end{thm}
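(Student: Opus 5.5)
Let $G$ act properly and cocompactly by isometries on a proper $\cat{0}$ space $X$, and let $H\le G$ be a torsion subgroup. The plan is to reduce to the statement that every finitely generated torsion subgroup of $G$ fixes a point of $X$, and then conclude by properness.

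We use two standard consequences of the cocompact, proper setting.
\begin{itemize}
\item[(i)] $G$ has only finitely many conjugacy classes of finite subgroups. Every finite subgroup fixes a point (Cartan's circumcenter), and finite subgroups are point stabilizers' subgroups. Cocompactness gives a compact set $K$ meeting every orbit, and properness makes $\{g: gK\cap K\neq\emptyset\}$ finite. Hence the orders of finite subgroups of $G$ are uniformly bounded, say by $N$.
\item[(ii)] Every torsion element has order at most $N$, so $H$ has exponent dividing $N!$. Thus $H$ is a torsion group of bounded exponent.
\end{itemize}

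Next, let $F\le H$ be finitely generated. Then $F$ is a finitely generated torsion group of bounded exponent, acting properly on $X$. Moreover $X$ has bounded geometry, since it is proper and cocompact. By the fixed-point result announced in the abstract, $F$ has a global fixed point in $X$, so $F$ is finite with $|F|\le N$. That result is obtained as follows.
\begin{itemize}
\item The random walk driven by a symmetric finitely supported measure on $F$ has zero drift, because of the bounded exponent.
\item Zero drift gives almost fixed points, meaning the joint displacement of the generators can be made arbitrarily small.
\item The Kazhdan-type rigidity then upgrades this to a genuine fixed point. It applies because a torsion group of bounded exponent fixes a point in every isometric action on $\R^n$: the image in $O(n)$ is a bounded-exponent linear group, hence finite by Burnside–Schur, and the translational part then has a fixed point.
\end{itemize}

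Finally, $H$ is the directed union of its finitely generated subgroups, each of order at most $N$. A directed union of groups of uniformly bounded order is finite: take a finitely generated subgroup of maximal order, and note that it must contain every element of $H$. Hence $H$ is finite.

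The main obstacle is the middle step: proving that finitely generated bounded-exponent groups fix a point when acting properly on a bounded-geometry $\cat{0}$ space. Two points need care.
\begin{itemize}
\item The Kazhdan-type rigidity as stated requires geodesic completeness and finite dimension. A cocompact proper $\cat{0}$ space has finite dimension, but it need not be geodesically complete. I would first try to pass to a closed convex invariant subspace, such as the set where the minimal displacement is attained. Failing that, I would replace the geodesically complete hypothesis by a direct ultralimit argument.
\item In that ultralimit argument, rescaling a fixed-point-free action with small displacements yields a limit action on a space of dimension at most $\dim X$. Properness should force this limit to be a Euclidean-type cone where the Burnside–Schur argument applies, giving a contradiction.
\end{itemize}
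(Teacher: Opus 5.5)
Your reductions are sound and agree with the paper:
\begin{itemize}
\item the uniform bound $N$ on orders of finite subgroups, hence bounded exponent of $H$;
\item zero drift from bounded exponent (Lemma~\ref{lem:block-swap});
\item almost fixed points from zero drift, bounded geometry and Schur's theorem (Proposition~\ref{lem:strongly_fixed});
\item the final directed-union argument.
\end{itemize}
The gap is exactly the step you flag: upgrading almost fixed points of a finitely generated $F\le H$ to a genuine fixed point. The rigidity principle (Theorem~\ref{thmD}) uses geodesic completeness essentially. It enters in Proposition~\ref{prop:tits_boundary}, to show that blow-up ultralimits $\omega\text{-}\lim(r_nX,p_n)$ with $r_n\to\infty$ keep bounded geometry and have compact Tits boundary. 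Neither of your repairs supplies it. The set where $x\mapsto\max_{s\in S}d(x,sx)$ attains its infimum is empty in precisely the case you must exclude (infimum $0$, no fixed point). Closed convex subsets are in any case not geodesically complete in general. The ultralimit fallback also fails as stated. After blowing up, the limit action has no reason to be proper, so properness cannot force a Euclidean cone. Without geodesic completeness the blow-up need not even have bounded geometry. For example, take $\R\times C$ with $C$ a star of countably many edges of lengths $1/k$, blown up at the centre of the star.

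The missing idea is that no rigidity principle is needed here: cocompactness of the ambient group $G$ upgrades almost fixed points directly. Let $S$ be a finite generating set of $F$, with $d(sx_n,x_n)<1/n$ for all $s\in S$. Choose $g_n\in G$ with $y_n:=g_nx_n\in K$, where $K$ is compact and $GK=X$. Then $d(g_nsg_n^{-1}y_n,y_n)<1$ with $y_n\in K$, so by properness the conjugates $g_nsg_n^{-1}$ range over a finite subset of $G$. Pass to a subsequence along which $g_nsg_n^{-1}=t_s$ for every $s\in S$ and $y_n\to y\in K$. Then $t_sy=y$. Hence, for any $m$ in the subsequence, $g_m^{-1}y$ is fixed by every $s\in S$, hence by $F$, and properness gives $|F|<\infty$. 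This is the paper's proof, and it uses neither geodesic completeness nor the Breuillard--Green--Tao lemma. Simply citing Theorem~\ref{thm:proper-torsion} as a black box would also close your argument. However, the paper proves that result via Breuillard--Green--Tao, not by the route you sketch.
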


This answers a longstanding question first raised by Swenson and subsequently included in several open problem lists in geometric group theory, see \cite{bestvina, caprace-2014, norin-osajda-przytycki}. For example, \cite[Problem 24]{kapovich} refers to it as a conjecture of Swenson. To the best of our knowledge, this problem remained open even in the case where the CAT(0) group is the orbifold fundamental group of a compact nonpositively curved Riemannian orbifold.
The case of dimension 1 is treated in \cite[section 6.5]{serre} and for cube complexes it is established in  \cite{sageev}, without the need for cocompactness. Norin, Osajda, and Przytycki \cite{norin-osajda-przytycki} recently proved it for groups acting on $2$-dimensional CAT(0) complexes, without requiring an embedding into a CAT(0) group.

Our second main result, without the cocompactness assumption, is as follows.
\begin{thm} \label{thm: torsion-intro}
    Let $\Gamma$ be a finitely generated group for which there exists an integer $N$ such that $g^N=e$ for all $g\in \Gamma$. Then we have
     \begin{itemize}
 \setlength{\itemsep}{2pt}
     \item[{\rm (1)}]  Whenever $\Gamma$ acts properly on a complete $\cat{0}$ space of bounded geometry, it must be a finite group.
     \item[{\rm (2)}] Whenever $\Gamma$ acts on a complete finite-dimensional $\cat{0}$ space $Y$, there is a global fixed point in $Y$.
 \end{itemize}
\end{thm}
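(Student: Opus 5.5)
The plan follows the strategy sketched in the abstract. First show that random walks of a bounded-exponent group have zero drift. Then use this to build almost fixed points, and upgrade almost fixed points to genuine fixed points by rigidity.

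\textbf{Step 1 (zero drift).} Fix a finite symmetric generating set $S$ of $\Gamma$ and let $\mu$ be the uniform measure on $S$. For an action on a complete $\cat{0}$ space $Y$ and a basepoint $x$, the drift $\ell=\lim_n \frac1n \mathbb{E}\, d(x, Z_n x)$ exists by subadditivity. Suppose $\ell>0$. Then a Karlsson--Margulis type theorem gives, for almost every path, a geodesic ray $\gamma$ with $d(Z_n x,\gamma(\ell n))=o(n)$. We aim to contradict $g^N=e$. The idea is that a linear-rate escape produces a boundary point $\xi$ and a Busemann function $b_\xi$. Along the walk, $b_\xi(Z_n x)\approx -\ell n$. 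Stationarity and the cocycle identity should then yield a homomorphism-like quantity: a nonzero ``Busemann character'' on some subgroup, or an asymptotically linear function on $\Gamma$. Such a function must vanish on torsion elements, since $h(g^N)=Nh(g)=0$. The delicate part is that the Busemann cocycle is only a cocycle, not a homomorphism. I would therefore pass to an ultralimit of rescaled actions $Y_\omega=\lim_\omega (Y,\frac1n d)$. There the linear drift becomes an honest action of $\Gamma$ on a $\cat{0}$ space with a translation-like behaviour along a line. Projecting via Busemann functions to $\R$ gives an isometric action of $\Gamma$ on $\R$, hence a homomorphism into $\isom{\R}$. Because every element has order dividing $N$, this action lands in a finite group and has a fixed point. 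This contradicts the positive drift, so $\ell=0$.

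\textbf{Step 2 (almost fixed points).} Zero drift implies $\inf_{y}\max_{s\in S} d(y,sy)=0$: the joint minimal displacement of the action vanishes. Concretely, zero drift gives points $Z_n x$, and after averaging (barycenters of the pushforward measures) we obtain points moved arbitrarily little relative to scale. Rescaling by the displacement and taking ultralimits produces a new action that either has a fixed point at the relevant scale or is an action with displacement exactly $1$. I would repeat the Step 1 argument in the ultralimit to exclude the latter.

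\textbf{Step 3 (upgrade to fixed points).} This step uses the Kazhdan-type rigidity principle announced in the introduction. Every isometric action of $\Gamma$ on $\R^n$ has a fixed point: the linear part gives a finite image in $O(n)$ by Burnside--Schur, since a finitely generated linear group of bounded exponent is finite. Passing to a finite-index subgroup, the action is by translations, which must be trivial by torsion. Hence a fixed-point-free action on a finite-dimensional space, or on a bounded-geometry space, would have joint minimal displacement bounded away from zero, contradicting Step 2. For (2), I would reduce to the geodesically complete case by standard finite-dimensional arguments: pass to a minimal invariant closed convex subset, or use that the ultralimit tangent cones are Euclidean-like. Then a fixed point exists. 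For (1), the fixed point has a finite stabilizer by properness. Since the fixed point's stabilizer is all of $\Gamma$, the group $\Gamma$ is finite.

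\textbf{Main obstacle.} The hardest step is Step 1, making positive drift produce a genuine isometric action on a Euclidean factor or on $\R$. It is also where finite-dimensionality or bounded geometry must enter to control the ultralimit and keep it from degenerating.
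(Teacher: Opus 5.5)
\textbf{Step 1 has a genuine gap, and it is the heart of the proof.} The Karlsson--Margulis limit point $\xi$ is random and need not be $\Gamma$-invariant, so $b_\xi$ gives only a cocycle. Your rescaled limit does not repair this. In $\omega\text{-}\lim (Y,\frac1n d,x)$, every fixed $g\in\Gamma$ satisfies $d(x,gx)/n\to 0$, so $\Gamma$ fixes the basepoint and there is no line to project onto. Moreover, the drift lives in the sequence $Z_n$, not in any element of $\Gamma$. So even an action of $\Gamma$ on $\R$ with a fixed point would not contradict $\ell>0$. The paper uses a Busemann-function argument of this kind only where a $\Gamma$-fixed boundary point attracting the walk is already available (compact Tits boundary, visibility spaces).

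The missing idea is the block-swap lemma. Take $U_m,V_m$ independent with law $\mu^{*m}$. Since $(U_m,V_m)$ and $(V_m,U_m)$ have the same law, both $U_mV_m$ and $V_mU_m$ have law $\mu^{*2m}$. Hence one can fix $u,v$ with $d(y,uy),d(y,vy)=\ell m+o(m)$ and $d(y,uvy),d(y,vuy)=2\ell m+o(m)$. Then $w=uv$ has order $q\le N$. In the closed $2q$-gon with vertices $w^ry, w^ruy$, consecutive vertices are about $\ell m$ apart and vertices two apart are about $2\ell m$ apart; the swapped product $vu$ is exactly what controls the odd vertices. In $\omega\text{-}\lim(Y,m^{-1}d,y)$ this polygon becomes a closed local geodesic of length $2q\ell>0$, which cannot exist in a $\cat{0}$ space. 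This step needs no finite-dimensionality or bounded geometry.

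\textbf{Step 2 does not go through as written.} Zero drift alone does not give $\delta(\rho)=0$: simple random walk on $\Z$ acting by translations on $\R$ has zero drift but $\delta=1$. Your barycenter averaging is a Liouville-type argument, unavailable for non-amenable groups such as free Burnside groups. Rerunning Step 1 in a rescaled limit only returns zero drift again, which is compatible with displacement $1$.

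The paper's route from zero drift to almost fixed points has three ingredients:
\begin{itemize}
\item Izeki's theorem: zero drift implies a fixed point in $\partial Y$ or an invariant flat.
\item Property $\mathrm F\R^n$, which you correctly derive from Schur.
\item Resolving a boundary fixed point by an ultralimit along a ray. This produces $Z_1\times\R$, kills the $\R$-factor because $\mathrm{Hom}(\Gamma,\R)=0$, and terminates by dimension or by a capacity count of $n_{r,\varepsilon}$.
\end{itemize}

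\textbf{Step 3 also fails as written.} Theorem~\ref{thmD} needs geodesic completeness and bounded geometry. Neither part of the statement supplies geodesic completeness, and (2) does not supply bounded geometry either. Passing to a minimal invariant convex subset does not provide them.

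The paper avoids Theorem~\ref{thmD} here.
\begin{itemize}
\item For (2): Bourdon's rescaling gives an action with $\delta>0$. Proposition~\ref{prop:no_fixed_point_in_bdry} then gives an action on some $Z$ with $\dim Z\le\dim Y$ and no fixed point in $Z\cup\partial Z$. Block-swap plus Izeki forces an invariant flat, where Schur and Cartan give a fixed point, a contradiction.
\item For (1): almost fixed points plus discreteness are upgraded by the Breuillard--Green--Tao Margulis lemma. A point moved less than $\varepsilon(K_Y)$ by every generator makes $\rho(\Gamma)$ virtually nilpotent, hence finite. Properness makes $\ker\rho$ finite, so $\Gamma$ is finite.
\end{itemize}
Your own argument for (1) never actually produces the fixed point to which you want to apply properness.
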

Part (1) of Theorem \ref{thm: torsion-intro}, whose proof relies on the generalized Margulis lemma due to Breuillard-Green-Tao \cite{breuillard-green-tao},  can be viewed as a generalization of Theorem \ref{cor:Swenson-intro}. Part (2) of Theorem \ref{thm: torsion-intro} states that every finitely generated group acting on a finite-dimensional CAT(0) space without a global fixed point possesses elements of arbitrarily large orders. This result partially addresses a conjecture, as formulated, for instance, in \cite[Conjecture 1.5]{norin-osajda-przytycki}, which asserts that \emph{every finitely generated group acting on a finite-dimensional CAT(0) complex without a global fixed point contains an element of infinite order}. It also implies that Burnside groups cannot act without fixed points on finite-dimensional complete $\cat{0}$ spaces, in contrast to Osajda's construction of  fixed point free actions of the free Burnside groups on certain infinite-dimensional $\cat{0}$ cube complexes \cite{osajda}.


Note that Part (2) of Theorem \ref{thm: torsion-intro}, like most of the results below, does not require the action to be discrete, nor the space to be a complex or admit a cocompact group of isometries.

The proofs of Theorems \ref{cor:Swenson-intro} and \ref{thm: torsion-intro} are based on the key Lemma \ref{lem:block-swap}, which shows that random walks on groups of bounded exponent must have zero drift in $\cat{0}$ spaces, and the application of certain fixed-point rigidity principles that we now describe.\\

The third main result is a rigidity statement for CAT(0) spaces that can be thought of as a nonlinear analogue of Kazhdan’s property (T). 
Sometimes it can serve as a replacement for a generalized Margulis lemma, like the one of Breuillard-Green-Tao \cite[Cor. 1.15]{breuillard-green-tao}, when the action is not properly discontinuous, see subsections \ref{ssec:BGT} and \ref{ssec:visibility} for illustrations of this point. 

Let $\Gamma$ be a group generated by a finite set $S$ and let $\rho$ be an action on a CAT(0) space $Y$ by isometries. Define the \emph{joint minimal displacement} of this action as
\begin{equation*}
 \delta (\rho)=\inf_{x \in Y} \max_{s \in S} d(x, \rho(s)x).
\end{equation*}
We say that the action has {\it almost fixed points} if $\delta (\rho )=0$. Note that this is a property of the action and is independent of the chosen finite generating set. 

A group is said to have {\it property} $\mathrm F \mathbb R ^n$, or the {\it fixed-point property for $\R^n$}, if any isometric action on $\mathbb R^n$ has a global fixed point. It is immediate that $\mathrm F \mathbb R ^n$ implies $\mathrm F \mathbb R ^m$ if $n>m$. A group is said to have {\it property} $\mathrm F \mathbb R ^{<\infty}$ if it has property $\mathrm F \mathbb R ^n$ for every $n \in \mathbb{N}$. In particular, property FH implies property $\mathrm F \mathbb R ^{<\infty}$.

\begin{thm}\emph{($\cat{0}$ Kazhdan constants)}\label{thmD}
Let $\Gamma$ be a group generated by a finite set $S$ and $\mathcal{Y}$ a family of geodesically complete $\cat{0}$ spaces with uniformly bounded geometry. 
 Suppose either
 \begin{itemize}
 \setlength{\itemsep}{2pt}
     \item[{\rm (i)}] that $\Gamma$ has property $\mathrm F \mathbb R ^{<\infty}$, or
     \item[{\rm (ii)}] that there exists a positive integer $n$ such that $\Gamma$ has property $\mathrm F \mathbb R ^{n}$ and $\dim Y \leq n$ for all $Y \in \mathcal{Y}$. 
 \end{itemize}
 Then there is a constant $\kappa (S, \mathcal{Y})>0$ such that, for any $Y\in \mathcal{Y}$ and homomorphism $\rho :\Gamma \rightarrow \mathrm{Isom}(Y)$ that provides a fixed-point-free action, the inequality
 $\delta(\rho ) \geq \kappa (S ,\mathcal{Y})$ holds.
\end{thm}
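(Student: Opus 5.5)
We argue by contradiction and rescale. Suppose no such $\kappa$ exists. Then there are spaces $Y_k\in\mathcal{Y}$ and fixed-point-free actions $\rho_k:\Gamma\to\isom{Y_k}$ with $\delta(\rho_k)\to 0$. Fix $x_k\in Y_k$ with $\max_{s\in S} d(x_k,\rho_k(s)x_k)\le 2\delta(\rho_k)$. The aim is to produce, in the limit, an isometric action of $\Gamma$ on some $\R^m$ without a fixed point. The bound on $m$ is $m\le n$ in case (ii), and some finite $m$ in case (i). This contradicts property $\mathrm F\mathbb R^{n}$, resp. $\mathrm F\mathbb R^{<\infty}$.

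The key step is choosing the right scale. Since $\rho_k$ has no fixed point, the displacement function
\[
D_k(x)=\max_{s\in S} d(x,\rho_k(s)x)
\]
is not identically small on large balls. More precisely, if every orbit were bounded there would be a fixed point (circumcenter), so $D_k$ is unbounded in a suitable sense. We want a point $y_k$ and a scale $r_k\to 0$ such that $D_k(y_k)\approx r_k$, while $D_k\ge c\,r_k$ on the ball $B(y_k, R r_k)$ for a fixed $c>0$ and every $R$. A Gromov-type "point selection" lemma does this. Starting at $x_k$, whenever there is a point within distance $R\cdot D_k(\text{current})$ whose displacement is less than half of $D_k(\text{current})$, we move to it. The moves sum geometrically, so if the process ran forever it would converge to a point of displacement $0$, a fixed point. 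Hence it stops. Arranging this uniformly in $R$ needs a diagonal choice with $R_k\to\infty$.

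We then rescale the metric on $Y_k$ by $1/D_k(y_k)$ and take an ultralimit $Y_\omega$ based at $y_k$. $Y_\omega$ is a complete CAT(0) space, and $\Gamma$ acts on it, because each generator moves the basepoint a bounded amount in the rescaled metric. By the selection step, every point of $Y_\omega$ is moved by some generator at least $c>0$, so the limit action has no fixed point. Since $D_k(y_k)\to 0$ while the geometry is uniformly bounded, we are blowing up at infinitesimal scale. Bounded geometry together with geodesic completeness should force the limit to be a tangent-cone-type object. The candidate tool is the Lytchak–Nagano structure theory of geodesically complete CAT(0) spaces with bounded geometry: tangent cones are Euclidean cones of dimension at most $n$, and iterated blow-ups of a point in a space of dimension at most $n$ are eventually Euclidean factors.

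\textbf{Main obstacle.} The main difficulty is passing from this limit to an honest $\R^m$ action. The rescaled ultralimit is a Euclidean cone or product $\R^j\times C$, with $C$ a cone of lower complexity. We would argue by induction on dimension. If the action preserves the splitting, the Euclidean factor either carries a fixed-point-free action, which contradicts $\mathrm F\mathbb R^j$ with $j\le n$, or $\Gamma$ fixes a point there. In the latter case we restrict to the fiber, a lower-complexity cone, and repeat the rescaling argument. Termination uses the dimension bound $n$ in case (ii). In case (i), the uniform bounded geometry should bound the dimension of all $Y\in\mathcal Y$ anyway, so we reduce to (ii). The delicate points are two. First, the fixed-point-freeness (the constant $c$) must survive each rescaling. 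Second, in a genuine cone the action must fix the cone point, or else be pushed into the Euclidean factor. We would handle the second by noting that a group action on a cone $C$ with no Euclidean factor preserves the tip, which is then a fixed point. That contradicts $c>0$ unless the Euclidean factor is nontrivial.
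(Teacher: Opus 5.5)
\textbf{There is a genuine gap: nothing you propose forces the rescaled limit $Y_\omega$ to be a Euclidean cone or of the form $\R^j\times C$, and your argument depends on that.}

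Your rescaling step is sound and is essentially the paper's: Bourdon-type point selection when $\delta(\rho_k)=0$, and direct rescaling by $1/\delta(\rho_k)$ otherwise. One small slip: if $\delta(\rho_k)=0$ you cannot pick $x_k$ with $D_k(x_k)\le 2\delta(\rho_k)$, since that would be a fixed point. Take $D_k(x_k)\le 1/k$ and let the selection do the work.

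The trouble is the next step. $Y_\omega$ is not a tangent cone. The basepoints $y_k$ need not converge: they can escape to infinity in a non-cocompact $Y$, or lie in different members of $\mathcal Y$. The scale $D_k(y_k)$ is set by the action, not by the local geometry. So the Lytchak--Nagano theory of (iterated) tangent cones at a fixed point of a fixed space does not apply.

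Here is a concrete failure. Attach to $\R$ two rays, one at the point $k$ and one at $k+2^{-k}$, for every $k\ge 1$. This tree $Y$ is geodesically complete and has bounded geometry. Yet $\omega\text{-}\lim(2^kY,k)$ is a line with two rays attached at two points distance $1$ apart. That space is neither a Euclidean cone nor splits off a Euclidean factor. If $Y_\omega$ really were a Euclidean cone, your endgame would work: the set of tips is an invariant flat, and $\mathrm F\mathbb R^j$ gives a fixed point. But that structure is not available in general.

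Your induction cannot rescue this. After restricting to a fiber, the joint displacement is still $\ge c$ everywhere, so there are no small-displacement points to blow up around. A further blow-up makes the generators move the basepoint infinitely far, so there is no limit action. A blow-down makes the limit action fix the basepoint. So the proposal has no mechanism that produces an $\R^m$-action from the limit.

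The missing idea is to extract only a weaker, always-valid structure from the blow-up and then argue dynamically. The paper shows (Proposition \ref{prop:tits_boundary}) that any rescaled ultralimit with $r_n\to\infty$ of geodesically complete spaces with uniformly bounded geometry has bounded geometry and compact Tits boundary. The proof compares $Y_\omega$ with a Gromov--Hausdorff limit of the balls $\bar B(p_n,r)$ and uses convexity of distance along geodesics issuing from $p_n$.

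The rest of the argument, due to the paper, runs as follows.
\begin{itemize}
\item Compactness of $\partial_T Y_\omega$ makes $\isom{\partial_T Y_\omega}$ compact.
\item Karlsson--Margulis then forces zero drift for any symmetric random walk with finite second moment. Positive drift would make the stationary boundary measure a Dirac mass at a $\Gamma$-fixed point $\xi$, with $b_\xi(g_np,p)\to-\infty$. That is impossible: $\Gamma$ has no nontrivial homomorphism to $\R$, so it preserves horospheres.
\item Zero drift, Izeki's theorem (Theorem \ref{thm:IK23}) and the iterated-ultralimit resolution of boundary fixed points give $\delta(\rho_\omega)=0$ (Proposition \ref{prop:bdd_geom_almost_fix}).
\item This contradicts the lower bound on displacement in the limit.
\end{itemize}
Your reduction of case (i) to case (ii) via a uniform dimension bound is plausible, but this route does not need it.
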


In particular, every action of $\Gamma$ with almost fixed points must have a global fixed point. In analogy with the classical linear theory, we call $\kappa (S, Y)$ a \emph{$\cat{0}$ Kazhdan constant}.
The assumption of bounded geometry is necessary, see Remark \ref{remark-boundedgeometry}.
We also note here that a geodesically complete $\cat{0}$ space with bounded geometry is finite-dimensional, according to \cite[Theorem 4.9]{cavallucci-sambusetti}.

Theorem \ref{thmD} applies to many groups of a variety of origins, from Lie group theory, algebra, and topology.\\


The fourth main theorem is the following:

\begin{thm} \emph{(CAT(0) alternative for amenable groups)} \label{thm:amenable-intro}
  Let $\Gamma$ be a finitely generated amenable group.  Then exactly one of the following holds:
 \begin{itemize}
  \item[{\rm (1)}]  every isometric action of $\Gamma$ on 
   a complete finite-dimensional $\cat{0}$ space
  has a global fixed point;
  \item[{\rm (2)}] $\Gamma$ admits an isometric action on some $\R^n$ without a global fixed point, or equivalently, $\Gamma$ has nonvanishing virtual first Betti number.
 \end{itemize}
 Moreover, the minimal dimension of a complete $\cat{0}$ space admitting a fixed-point-free action by $\Gamma$ equals the minimal dimension of a Euclidean space admitting such an action.
 \end{thm}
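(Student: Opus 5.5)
Theorem \ref{thm:amenable-intro} has three parts: the dichotomy (1) versus (2), the equivalence inside (2), and the statement on minimal dimensions. I would treat them in that order, relying on a scaling argument via ultralimits and on the fixed-point property for $\R^n$.

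\textbf{The equivalence inside (2).} Suppose $\Gamma$ acts on $\R^n$ without a fixed point. An amenable group acting on $\R^n$ with unbounded orbits has a finite-index subgroup acting by translations. To see this, note that the linear part lands in $O(n)$, whose image has a connected solvable (hence abelian, so a torus) closed subgroup of finite index by Tits/amenability considerations. Passing to a finite-index $\Gamma'$, a nonzero translation cocycle then yields a homomorphism $\Gamma'\to\R$, which is nonzero when there is no fixed point; so $b_1(\Gamma')>0$. Conversely, a surjection $\Gamma'\to\Z$ gives an action of $\Gamma'$ on $\R$ by translations. Inducing it to $\Gamma$ gives an action on $\R^{[\Gamma:\Gamma']}$ that permutes the coordinate lines, and this action has no fixed point. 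The two alternatives are mutually exclusive, since $\R^n$ is itself a complete finite-dimensional CAT(0) space.

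\textbf{(1) or (2).} Assume (2) fails, i.e.\ $\Gamma$ has $\mathrm F\R^n$ for all $n$. Suppose $\Gamma$ acts on a complete CAT(0) space $Y$ of dimension $\le n$ without a global fixed point; we aim for a contradiction. The steps are as follows.
\begin{itemize}
\item First I would reduce to the case of unbounded orbits. A bounded orbit has a circumcenter, which is a fixed point.
\item Next I would exploit amenability. By the Adams--Ballmann theorem, an amenable group acting on a finite-dimensional (hence proper-like, after reduction) CAT(0) space either fixes a point at infinity or preserves a flat. Since there is no fixed point in $Y$ and there are unbounded orbits, the flat case gives an action on some $\R^k$, $k\le n$, without a fixed point, contradicting $\mathrm F\R^k$. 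In the boundary-fixed case I would use the Busemann function $b_\xi$ at the fixed point $\xi$. It transforms by a homomorphism $\Gamma\to\R$. If that homomorphism is nonzero, then $\Gamma$ acts on $\R$ without a fixed point, contradicting $\mathrm F\R^1$. So $\Gamma$ preserves every horosphere, and I would pass to the transverse space. This is the space of asymptote classes of rays toward $\xi$, which is CAT(0) of dimension at most $n-1$ (Caprace--Lytchak, Leeb). The induced action again has no fixed point; otherwise we would obtain almost fixed points on the horosphere.
\item Induction on dimension then closes the argument, with the base case of a $0$-dimensional space being trivial.
\end{itemize}
Because Adams--Ballmann in full generality needs properness, for non-proper $Y$ I would instead use the Caprace--Lytchak result that a finite-dimensional CAT(0) space has a point at infinity fixed by any amenable action with no fixed point.

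\textbf{Main obstacle.} The hardest step is the transverse-space step. One must show that the absence of a fixed point persists, while also keeping track of almost fixed points, since the transverse action can have almost fixed points without a genuine one. This is exactly where I would use the scaling/ultralimit mechanism. Suppose the joint displacement tends to zero along a sequence of points. Rescaling by $1/\delta$ at those points gives an ultralimit that is a finite-dimensional CAT(0) space with a fixed-point-free action at displacement $1$. I would iterate the argument on that space, possibly with an Euclidean de Rham factor split off. Finite dimension bounds the depth of this induction.

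\textbf{Minimal dimensions.} The "moreover" clause follows by running the same induction while tracking dimension. Every reduction step either produces a fixed-point-free action on some $\R^k$ with $k$ at most the current dimension, or lowers the dimension. Hence a fixed-point-free action on a space of dimension $m$ yields one on $\R^k$ with $k\le m$, and the reverse inequality is trivial.
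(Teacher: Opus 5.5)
\textbf{Gap in the equivalence inside (2).} Your argument that a fixed-point-free action on some $\R^n$ forces $vb_1(\Gamma)>0$ rests on the claim that some finite-index subgroup acts by translations. That claim is false.

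Take $\Gamma=\langle R_\theta,T_v\rangle\le\mathrm{Isom}(\R^2)$, where $R_\theta$ is rotation by an irrational multiple of $\pi$ and $T_v$ is a nonzero translation. This group is finitely generated and metabelian, and it has no fixed point. The linear part of every finite-index subgroup is $\langle R_\theta^k\rangle$ for some $k\ge 1$, which fixes no nonzero vector. So no finite-index subgroup acts by translations.

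Nor can you rescue the argument by passing to a subgroup whose linear part lies in a torus and projecting the translation cocycle onto the invariant vectors. Here that projection is zero and the cocycle is not a homomorphism, yet there is still no fixed point. In this example $b_1>0$ comes from the linear part $\Gamma\to\langle R_\theta\rangle\cong\Z$, and your argument never considers that case.

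A correct version splits into two cases. By the Tits alternative, the linear image $L(\Gamma)\le O(n)$ is virtually solvable, hence virtually abelian, since the identity component of its closure is a torus.
\begin{itemize}
\item If $L(\Gamma)$ is infinite, a finite-index abelian subgroup of it is infinite and finitely generated, so it surjects onto $\Z$. Its preimage in $\Gamma$ then has $b_1>0$.
\item If $L(\Gamma)$ is finite, then $\ker L$ has finite index and acts by translations. Its translation homomorphism is nonzero, since otherwise $\Gamma$ would act through a finite group and fix a point.
\end{itemize}
The paper argues differently: it applies the Tits alternative to the image $G\le\mathrm{Isom}(\R^n)$ and descends the derived series of a finite-index solvable subgroup until an infinite abelianization appears.

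\textbf{The rest of the proof.} The dichotomy and the dimension count otherwise follow the paper's argument. That argument uses Bourdon's rescaling to obtain positive joint displacement. It then reduces along a fixed boundary point, using that $\Gamma$ has no nontrivial homomorphism to $\R$, so the dimension drops strictly. Finally it applies Adams--Ballmann or Caprace--Lytchak to get an invariant flat. Where you use the transverse space, the paper uses an ultralimit along a ray (Lemmas~\ref{lem:ultralimit} and~\ref{lem:splitting}), and it mentions the transverse space as an alternative.

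Two points need tightening.
\begin{itemize}
\item The rescaling must be done at points chosen by Bourdon's lemma, not at arbitrary points of small displacement. It is cleanest to rescale once at the outset. Once $\delta(\rho)>0$, that lower bound is inherited by every transverse space, so the dimension drops with no further rescaling. This is Proposition~\ref{prop:no_fixed_point_in_bdry}.
\item The Caprace--Lytchak statement you quote omits its invariant-flat alternative, which you need anyway. For example, $\Z^2\rtimes\Z/4$ acting on $\R^2$ fixes no point of $\R^2\cup\partial\R^2$.
\end{itemize}
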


In infinite dimensions, every infinite finitely generated amenable group acts on a Hilbert space without a global fixed point \cite{bekka-cherix-valette}. 
Our theorem significantly generalizes \cite{izeki-karlsson} and should be compared with the foundational theorems by Burger-Schroeder \cite{BS87}, Adams-Ballmann \cite{AB98} and Caprace-Lytchak \cite{caprace-lytchak}, which state that actions by amenable groups either stabilize a flat or fix a boundary point at infinity. 
Theorem \ref{thm:amenable-intro} strengthens this partly by identifying Euclidean actions as the unique obstruction to global fixed points in the space for amenable groups acting 
on finite-dimensional $\cat{0}$ spaces.
Amenable group actions on CAT(0) spaces have been studied extensively by Caprace and Monod, often under the assumption that the full isometry group acts cocompactly, see \cite{caprace, caprace-monod1, caprace-monod2}. For interesting related results in the case of CAT(0) cube complexes, we refer to the recent paper by Genevois \cite{genevois}.

\subsection{A selection of corollaries}
\subsubsection{Torsion subgroups of CAT(0) groups}
Theorem \ref{cor:Swenson-intro} asserts that every infinite subgroup of a $\cat{0}$ group contains an infinite order element. 
Swenson's original interest in this question was that assuming this was true, he could prove that one-ended $\cat{0}$ groups have no cut-point in the boundary. The latter was subsequently proved by Papasoglu-Swenson in \cite{papasoglu-swenson} using another approach. For a larger perspective on these types of results, we refer to \cite{haettel-osajda}.

A further consequence of Theorem \ref{cor:Swenson-intro} is that it removes the assumption that torsion subgroups are finite in some of the results of M\"oller-Varghese in \cite{moeller-varghese}, in particular their Corollary E, which concerns the continuity of abstract group homomorphisms.

\subsubsection{Finitely generated torsion groups} Since a CAT(0) group admits only finitely many conjugacy classes of finite subgroups, its elliptic elements have uniformly bounded orders (see e.g. \cite{bridson-haefliger}). It suffices for proving Theorem \ref{cor:Swenson-intro} to consider finitely generated torsion subgroups of bounded exponent. Consequently, Part (1) of Theorem \ref{thm: torsion-intro} can be viewed as a generalization of Theorem \ref{cor:Swenson-intro}. We note that the proof of Theorem \ref{cor:Swenson-intro} does not rely on the generalized Margulis lemma due to Breuillard-Green-Tao \cite{breuillard-green-tao}, which is, however, essential in the proof of Part (1) of Theorem \ref{thm: torsion-intro}. This in particular implies that Burnside groups cannot act properly without fixed points on complete $\cat{0}$ spaces of bounded geometry.

\subsubsection{Kazhdan-type rigidity principle}

Theorem \ref{thmD} applies to groups with Kazhdan's property (T), such as higher rank lattices and $\mathrm{Aut}(F_n)$, $n\geq 5$, and groups all of whose finite-dimensional real linear representations have finite image, since the isometry group of a Euclidean space is linear. The latter class includes torsion groups and nonlinear just infinite groups, and there are many such examples within the family of weakly branch groups \cite{abert, bartholdigrigorchuksunik}.
Moreover, since finitely generated linear groups are residually finite, groups with no nontrivial finite quotients, such as infinite simple groups, cannot act on Euclidean spaces without a fixed point. 

Furthermore, many important groups, for example mapping class groups, do not admit fixed-point-free Euclidean actions in small dimensions, see \cite{bridson} for a discussion on related dimension restrictions. 

Theorem \ref{thmD} has many consequences, see section \ref{sec:rigidityconsequences} for the proofs of the following assertions.
It can also be viewed as a nonlinear extension of one part of Weil's celebrated cohomological criterion for local rigidity, \cite{weil}:

\begin{cor}
    Let $\Gamma$ be a finitely generated group with property $\mathrm F \mathbb R ^{<\infty}$. Given a finite generating set, there is a constant $\kappa_n >0$ for each dimension $n$, so that for any representation  $\rho :\Gamma \rightarrow \mathrm{GL}_n(\mathbb{C})$ that is unbounded, $\delta(\rho)\geq \kappa_n$. In particular, any small deformation of a unitary representation of $\Gamma$ among linear representations is unitarizable. 
\end{cor}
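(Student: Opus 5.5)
We deduce this corollary from Theorem \ref{thmD}(i), applied to the family of symmetric spaces $Y_n = \mathrm{GL}_n(\mathbb{C})/\mathrm{U}(n)$ (or $\mathrm{SL}_n(\mathbb{C})/\mathrm{SU}(n)$ together with a Euclidean factor), for fixed $n$.

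First, $Y_n$ is a complete Riemannian manifold of nonpositive curvature, hence a geodesically complete $\cat{0}$ space. It is homogeneous, so it has bounded geometry: the packing constants at a fixed scale are the same at every point. Thus the one-element family $\mathcal{Y}=\{Y_n\}$ satisfies the hypotheses of Theorem \ref{thmD}. Because $\Gamma$ has property $\mathrm F\mathbb R^{<\infty}$, alternative (i) applies, and we obtain $\kappa_n=\kappa(S,\{Y_n\})>0$.

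Second, we translate between representations and actions. A representation $\rho:\Gamma\to \mathrm{GL}_n(\mathbb{C})$ acts on $Y_n$ (identified with positive definite Hermitian matrices) by $g\cdot P = \rho(g)P\rho(g)^*$. The action has a global fixed point $P$ exactly when $\rho(\Gamma)$ preserves the Hermitian form $P^{-1}$. That is, $\rho$ is conjugate into $\mathrm U(n)$, which by compactness of $\mathrm U(n)$ and Weyl's unitary trick is equivalent to $\rho(\Gamma)$ being bounded. So if $\rho$ is unbounded, the action is fixed-point-free, and Theorem \ref{thmD} gives $\delta(\rho)\geq\kappa_n$. Here $\delta(\rho)$ is understood as the joint minimal displacement of this induced action on $Y_n$.

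Third, for the deformation statement, let $\rho_0$ be unitary, so $\rho_0(S)\subset \mathrm U(n)$ fixes the base point $o=I$. If $\rho$ is close to $\rho_0$ on the finite set $S$, then $d(o,\rho(s)o)$ is small for each $s\in S$, since the orbit map $\mathrm{GL}_n(\mathbb{C})\to Y_n$ is continuous. In particular, once $\rho(s)$ lies close enough to $\rho_0(s)$ for all $s\in S$, we get $\delta(\rho)\le\max_s d(o,\rho(s)o)<\kappa_n$. The dichotomy from the previous step then forces $\rho$ to be bounded, hence unitarizable.

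The only step needing some care is the first: we must check that the symmetric space genuinely satisfies the paper's notion of bounded geometry, and handle the flat center $\mathbb{R}$ coming from $\det$. The flat factor causes no problem, since $\mathrm F\mathbb R^{<\infty}$ already covers Euclidean factors. The fixed-point/boundedness dictionary is classical.
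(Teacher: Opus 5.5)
Your proposal is correct and follows the paper's argument. You apply Theorem \ref{thmD}(i) to the single space $\mathrm{GL}_n(\mathbb{C})/\mathrm{U}(n)$, identify a global fixed point with unitarizability (equivalently, boundedness), and observe that a small deformation of a unitary representation has $\delta(\rho)<\kappa_n$; you simply fill in details the paper leaves implicit, such as homogeneity giving bounded geometry, the explicit action $P\mapsto\rho(g)P\rho(g)^*$, and continuity of the orbit map.
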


 The last statement is a weaker version of the local rigidity of unitary representations coming from Weil's criterion as employed by Rapinchuk in \cite{rapinchuk} for property (T) groups. For recent extensions of the Selberg-Calabi-Weil local rigidity theorems, in particular to a $\cat{0}$ setting, we refer to Gelander-Levit \cite{gelander-levit}.

Another consequence is a drift dichotomy for random walks in nonpositive curvature, see Theorem \ref{thm:F}. Here is an illustration of this in a special setting of considerable importance in rigidity theory and dynamical systems, see \cite[§3]{margulis}, \cite[Sect. 4.7]{benoist-quint}, and \cite{bochinavas}. Note that in contrast to the standard results, see for example \cite[Theorem 4.31]{benoist-quint}, our compactness-versus-positive Lyapunov exponent dichotomy needs no (strong) irreducibility or proximality assumption, instead requiring a group theoretic property:

\begin{cor} \label{cor:Lyapunov}
Let $\mu$ be a symmetric, Borel probability measure on $SL_n(\mathbb{R})$ with finite second moment. Assume that the group generated by the support of $\mu$ is a finitely generated group with property $\mathrm F \mathbb R^{n(n+1)/2-1}$, and that it is unbounded in $SL_n(\mathbb{R})$.  Then the top Lyapunov exponent $\lambda_{1,\mu}$ is strictly positive.  
\end{cor}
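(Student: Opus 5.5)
This corollary follows from the drift dichotomy (Theorem~\ref{thm:F}), which rests on Theorem~\ref{thmD}. The plan is to realize the random walk on $SL_n(\mathbb{R})$ as a random walk of isometries of the symmetric space $Y=SL_n(\mathbb{R})/SO(n)$, and to identify the drift there with the top Lyapunov exponent.

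The space $Y$ is a complete, proper, geodesically complete $\cat{0}$ manifold of dimension $n(n+1)/2-1$. It is homogeneous, so it has bounded geometry. Let $\Gamma$ be the finitely generated group generated by $\mathrm{supp}(\mu)$, acting on $Y$ through $g\cdot hSO(n)=ghSO(n)$. The hypotheses of Theorem~\ref{thmD}(ii) are then exactly met, with $\mathcal{Y}=\{Y\}$ and property $\mathrm F\mathbb R^{\dim Y}$.

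The action has no global fixed point. Otherwise $\Gamma$ would lie in a conjugate of the compact group $SO(n)$, contradicting unboundedness. Theorem~\ref{thmD} therefore gives a positive joint displacement $\delta(\rho)\geq\kappa>0$.

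I would then invoke the drift dichotomy (Theorem~\ref{thm:F}). For a symmetric measure with finite second moment whose support generates a group acting on $Y$ with $\delta(\rho)>0$, it yields positive drift:
\[
\ell=\lim_{k\to\infty}\tfrac1k d(o,Z_k o)>0,
\]
where $Z_k$ is the random walk product. The mechanism I expect is the following. If the drift vanished, then by the scaling/ultralimit argument and a Kazhdan-type inequality there would be points of arbitrarily small joint displacement, i.e.\ almost fixed points. Theorem~\ref{thmD} upgrades these to a global fixed point, which is a contradiction. Symmetry and the second moment are the hypotheses under which that dichotomy is stated.

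Finally I would relate $\ell$ to $\lambda_{1,\mu}$. With the basepoint $o=SO(n)$ and the normalized trace metric, $d(o,go)=\bigl(\sum_i(\log\sigma_i(g))^2\bigr)^{1/2}$ up to a constant, where $\sigma_i(g)$ are the singular values. Since $\det g=1$, we have $\sum_i\log\sigma_i=0$. Hence
\[
\sqrt{\textstyle\sum_i(\log\sigma_i)^2}\le C_n\log\sigma_1(g)=C_n\log\|g\|,
\]
because each $|\log\sigma_i|\le (n-1)\log\sigma_1$. Therefore
\[
\lambda_{1,\mu}=\lim_k \tfrac1k\log\|Z_k\|\ge \ell/C_n>0.
\]
The finite first moment implied by the second moment ensures that these limits exist almost surely, by Kingman's theorem.

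The main obstacle is checking that Theorem~\ref{thm:F} applies verbatim: it needs a measure that is not finitely supported, only with finite second moment. If it is stated only for finitely supported measures, I would have to redo the zero-drift-implies-almost-fixed-points step. This uses the harmonic-type estimate $\sum_g \mu(g)\, d(x,gx)^2$, which is controlled by the second moment.
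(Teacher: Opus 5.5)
Your proposal is correct and follows the paper's proof. You apply the drift dichotomy (Theorem~\ref{thm:F}) to $SL_n(\mathbb{R})/SO(n)$, whose dimension $n(n+1)/2-1$ matches the $\mathrm{F}\mathbb{R}$ hypothesis and where unboundedness rules out a fixed point, and then use $\det g=1$ to turn positive drift into $\lambda_{1,\mu}>0$. Your explicit bound $d(o,go)\le C_n\log\|g\|$ is a quantitative form of the paper's remark that positive drift means a nonzero Lyapunov vector with $\sum_i\lambda_i=0$; the detour through Theorem~\ref{thmD} and your guess at the inner mechanism of Theorem~\ref{thm:F} are unnecessary, since the dichotomy is used exactly as stated.
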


Note that if the orbit is unbounded, Theorem \ref{thmD} gives a gap for the joint minimal displacement. This can then be made into a condition on the spectral radius on a finite number of individual matrices for when the top Lyapunov exponent is positive, using \cite[Lemma 1.7]{breuillard-fujiwara} and an inequality due to Bochi, see \cite{bochi, breuillard-fujiwara}.

Another consequence of Theorem \ref{thmD} is
a result for torsion groups that complements \cite[Theorem 1.5]{ji-wu_tits_alt} since we do not assume that the action is properly discontinuous, on the other hand, we insist on geodesic completeness, which is not assumed in \cite{ji-wu_tits_alt}.
\begin{cor}
     Let $Y$ be a geodesically complete visibility space
 with bounded geometry, and let $\Gamma$ be a finitely generated torsion
 group. Whenever $\Gamma$ acts isometrically on $Y$, there is a global fixed point in $Y$.
\end{cor}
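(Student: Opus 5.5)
We argue by contradiction using Theorem \ref{thmD}. Suppose $\Gamma$, generated by a finite set $S$, acts on $Y$ with no global fixed point. Every finite-dimensional real linear representation of a finitely generated torsion group has finite image: by Burnside--Schur a finitely generated torsion linear group is finite. Since $\isom{\R^n}$ is linear, every isometric action of $\Gamma$ on $\R^n$ factors through a finite group, so it fixes the circumcenter of an orbit. Hence $\Gamma$ has property $\mathrm F\R^{<\infty}$. Recall also that $Y$ is finite-dimensional by \cite[Theorem 4.9]{cavallucci-sambusetti}. Theorem \ref{thmD}(i), applied with the family $\mathcal Y=\{Y\}$, then gives $\delta(\rho)\ge\kappa>0$. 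The task is therefore to show that a torsion group acting on a visibility space must have almost fixed points or a fixed point.

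\textbf{Step 1 (locate an unbounded orbit).} Every element of $\Gamma$ has finite order, so each single element is elliptic. If some orbit were bounded, the circumcenter would be a global fixed point. So all orbits are unbounded, and we pick a sequence $g_k\in\Gamma$ with $d(x_0,g_kx_0)\to\infty$.

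\textbf{Step 2 (obtain a boundary fixed point).} In a visibility space, the $\Gamma$-orbit accumulates on $\partial Y$. For a finitely generated group with no fixed point and positive joint displacement $\delta(\rho)\ge\kappa$, we use the displacement gap to control the minimal sets $\mathrm{Min}$. Take points $x_k$ nearly minimizing the function $x\mapsto\max_{s\in S}d(x,sx)$ on larger and larger balls, or, alternatively, consider the rescaled ultralimit of the action. This yields either a fixed point at infinity $\xi\in\partial Y$ or an action on a Euclidean cone. In the ultralimit the displacements stay bounded while the basepoints escape, so the limiting action on the asymptotic cone is again by a torsion group with almost fixed points. The cleaner route is the boundary: the finite-order elements are elliptic, and any fixed-point-free action of a finitely generated group on a proper CAT(0) space with $\delta>0$ and no bounded orbit must contain a hyperbolic element or fix a point of $\partial Y$. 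This is a standard dichotomy obtained by iterating generators along the near-minimizers. Since there are no hyperbolic elements, $\Gamma$ fixes some $\xi\in\partial Y$.

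\textbf{Step 3 (use visibility to get almost fixed points).} Fix $\xi$ and a geodesic ray $c$ toward $\xi$. Each generator $s$ is elliptic with fixed point set $F_s$, which is convex, closed, and contains a ray to $\xi$, because $s$ fixes $\xi$ and has a fixed point. Along $c(t)$, the function $t\mapsto d(c(t),sc(t))$ is convex and bounded, hence nonincreasing. The visibility property makes the ray $sc$ asymptotic to $c$, and in a visibility space with bounded geometry we show that $d(c(t),sc(t))\to0$. The argument is that if the distance stayed bounded below by $\epsilon>0$, the rays $c$ and $sc$ would bound a flat strip in the limit, using bounded geometry and compactness via Arzel\`a--Ascoli on translates. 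Visibility spaces contain no flat half-strips of positive width, so this is impossible. Thus $\max_{s\in S} d(c(t),sc(t))\to 0$, so $\delta(\rho)=0$, which contradicts the gap $\delta(\rho)\ge\kappa$. Therefore $\Gamma$ has a global fixed point.

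\textbf{Main obstacle.} I expect Step 2 to be the hardest part: producing a common boundary fixed point for a torsion group without assuming properness. I would first try extracting it from the Caprace--Lytchak-type results for finite-dimensional spaces, applied to the filtering family of fixed-point sets of finite subsets of $\Gamma$. The plan is to show that if these sets have empty intersection, then they define a point at infinity fixed by $\Gamma$.
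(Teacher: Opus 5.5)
Your outer reduction matches the last step of the paper's proof. Schur gives $\mathrm F\R^{<\infty}$, so by Theorem \ref{thmD} a fixed-point-free action has $\delta(\rho)\ge\kappa>0$, and it remains to show $\delta(\rho)=0$. However, both steps you use to get there have genuine gaps.

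\textbf{Step 2.} The ``standard dichotomy'' you invoke is not a known result. As stated, it says that a finitely generated group acting on a proper CAT(0) space with $\delta>0$ and no hyperbolic element must fix a point of $\partial Y$. Combined with Lemma \ref{lem:bounded geometry} (which removes a boundary fixed point while keeping bounded geometry and $\delta>0$) and Theorem \ref{thmD}, it would give a fixed point for every finitely generated torsion group acting on any geodesically complete CAT(0) space of bounded geometry. That is a case of Conjecture \ref{conj:nop} which the paper reaches only under extra hypotheses. Your Caprace--Lytchak fallback also fails. The filtering-family theorem needs a family of nonempty convex sets, which is what one has for locally finite groups. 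Here, for finite $F\subset\Gamma$, $\mathrm{Fix}(F)=\mathrm{Fix}(\langle F\rangle)$ can be empty; for $F\supseteq S$ it equals $\mathrm{Fix}(\Gamma)$, the very set in question. The boundary fixed point has to come from visibility. The paper uses the Ji--Wu theorem (Proposition \ref{prop:visibility_fixed_point}): all unbounded convergent sequences $g_nx_0$ converge to a single $\xi\in\partial Y$, which is therefore $\Gamma$-fixed.

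\textbf{Step 3.} This step rests on a false claim: visibility rules out flat half-planes, not flat strips. Take the universal cover of a closed nonpositively curved surface of genus $\ge 2$ that contains a flat cylinder. It is geodesically complete, has bounded geometry, and is Gromov hyperbolic, hence a visibility space, yet it contains strips $\R\times[0,w]$. For a suitably symmetric surface, an isometric involution reflecting such a strip in its midline fixes the endpoint $\xi$ of the midline and preserves horospheres. Yet it moves the edge ray $t\mapsto(t,0)$ to $t\mapsto(t,w)$, so $d(c(t),sc(t))=w$ for all $t$. In any case, your compactness argument could only produce a strip in a pointed limit space. That limit is not $Y$ (there is no cocompact action to pull it back), and it need not be a visibility space, so no contradiction arises. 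Your Step 3 would go through in CAT($-1$) spaces, where asymptotic rays starting on a common horosphere do converge.

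\textbf{How the paper gets $\delta(\rho)=0$.} It never tries to show that displacement decays along rays. Since $\Gamma$ fixes $\xi$ and has no homomorphism to $\R$, it preserves the Busemann function $b_\xi$. Positive drift would force a random walk to converge to $\xi$ with $b_\xi\to-\infty$ (Karlsson--Ledrappier), so the drift vanishes (Proposition \ref{prop:visibility_torsion_drift0}). Proposition \ref{lem:strongly_fixed}, which combines Izeki's zero-drift theorem with the iterated ultralimits of Lemma \ref{lem:bounded geometry}, then converts zero drift into $\delta(\rho)=0$. Only after that does Theorem \ref{thmD} conclude.
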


The proof of this last corollary in subsection \ref{ssec:visibility} illustrates how Theorem \ref{thmD} 
can sometimes be used when no generalized Margulis lemma is applicable.

\subsubsection{Amenable groups}

Since the isometry groups of Euclidean spaces are linear, the following is an immediate corollary of Theorem \ref{thm:amenable-intro} that responds partially to a general question raised by Margulis in \cite[§2, Problem 4]{margulis} about the linear nature of groups acting on Cartan-Hadamard manifolds:

\begin{cor}
   Any finitely generated amenable group that admits a fixed-point free action on a finite-dimensional complete CAT(0) space must possess an infinite-image, finite-dimensional linear representation. 
\end{cor}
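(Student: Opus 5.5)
The corollary will follow directly from Theorem \ref{thm:amenable-intro}, once we know that affine isometry groups of Euclidean spaces are linear. Let $\Gamma$ be finitely generated and amenable, and suppose it acts on a complete finite-dimensional $\cat{0}$ space without a global fixed point. Then alternative (1) of Theorem \ref{thm:amenable-intro} fails. Since exactly one alternative holds, (2) must hold: there is an isometric action $\rho:\Gamma\to\isom{\R^n}$ with no global fixed point. The remaining task is to turn $\rho$ into a finite-dimensional linear representation with infinite image.

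We use the standard embedding $\isom{\R^n}=\R^n\rtimes O(n)\hookrightarrow \mathrm{GL}_{n+1}(\R)$, which sends $x\mapsto Ax+b$ to the block matrix with $A$ in the upper-left corner, $b$ in the last column, and $1$ in the bottom-right entry. This is an injective homomorphism, so composing gives a linear representation $\tilde\rho:\Gamma\to \mathrm{GL}_{n+1}(\R)$. Its image is isomorphic to $\rho(\Gamma)$.

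Next we show that $\rho(\Gamma)$ is infinite. Suppose it were finite. Then every orbit $\rho(\Gamma)x$ is a finite, hence bounded, set. The circumcenter of a bounded set in $\R^n$ (or simply its barycenter, since the action is affine) is fixed by every isometry preserving the set. This would give a global fixed point, contradicting the choice of $\rho$. Hence $\tilde\rho$ has infinite image. If a complex representation is preferred, we complexify, and the image stays infinite.

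There is no genuine obstacle here. The only point to check is the finite-implies-fixed-point step, which is the elementary version of Cartan's fixed point theorem in $\R^n$; all the real content lies in Theorem \ref{thm:amenable-intro}. Alternatively, one can use the stated equivalence with nonvanishing virtual first Betti number. A finite-index subgroup surjecting onto $\Z$ gives, by induction, a representation of $\Gamma$ with infinite image: induce the translation action of $\Z$ on $\R$ up to $\Gamma$, which yields a fixed-point-free action on $\R^{[\Gamma:\Gamma']}$.
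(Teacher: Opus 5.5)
Your proposal is correct and follows the same route as the paper, which calls this an immediate corollary of Theorem \ref{thm:amenable-intro} because $\isom{\R^n}$ is linear. You have simply made explicit the two steps the paper leaves implicit: the embedding $\isom{\R^n}\hookrightarrow \mathrm{GL}_{n+1}(\R)$, and the barycenter argument showing that a fixed-point-free Euclidean action must have infinite image.
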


This leads to (Corollaries \ref{cor:C} and \ref{cor:D}):
\begin{cor} 
Any finitely generated amenable group, which is either 
\begin{itemize}
 \setlength{\itemsep}{2pt}
     \item torsion, or
     \item virtually simple, or 
     \item nonlinear and just infinite,
 \end{itemize}
must fix a point whenever acting on finite-dimensional complete $\cat{0}$ spaces.
\end{cor}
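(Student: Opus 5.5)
By Theorem \ref{thm:amenable-intro}, it suffices to show that a finitely generated amenable group $\Gamma$ of each of the three listed types has vanishing virtual first Betti number. Equivalently, we must show that every isometric action of $\Gamma$ on every $\R^n$ has a global fixed point. Once this is established, alternative (2) fails, so alternative (1) holds, and this is exactly the assertion.

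The common mechanism is linearity of $\isom{\R^n}=\R^n\rtimes O(n)\subset \mathrm{GL}_{n+1}(\R)$. Suppose $\Gamma$ acts on $\R^n$ without a fixed point. Then the image $\rho(\Gamma)$ is infinite, because a finite group of isometries fixes the barycenter of any orbit. Moreover $\rho(\Gamma)$ is a finitely generated linear group, so by Malcev's theorem it is residually finite. Alternatively, we can use the Betti number form directly: some finite index subgroup $\Gamma'$ surjects onto $\Z$.

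Each case now yields a contradiction.

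\textbf{Torsion case.} Here $\rho(\Gamma)$ is a finitely generated linear torsion group, so by Schur's theorem it is finite. This contradicts the fact that $\rho(\Gamma)$ is infinite. More directly, a torsion group has no finite index subgroup surjecting onto $\Z$, because the image of a torsion element in $\Z$ is torsion, hence zero.

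\textbf{Virtually simple case.} Let $\Gamma_0\le\Gamma$ be a finite index simple subgroup. If $\Gamma_0$ is finite, then $\Gamma$ is finite and has a fixed point, so assume $\Gamma_0$ is infinite. The image $\rho(\Gamma_0)$ is finitely generated (as a finite index subgroup of the finitely generated $\Gamma$), linear, and therefore residually finite. Since $\Gamma_0$ is simple, $\rho|_{\Gamma_0}$ is either injective or trivial. An infinite residually finite group has proper finite index normal subgroups, which an infinite simple group does not, so $\rho(\Gamma_0)$ cannot be an infinite simple group. Hence $\rho|_{\Gamma_0}$ is trivial, and $\rho(\Gamma)$ is finite, a contradiction. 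In Betti number form: a finite index subgroup $\Gamma'$ surjecting onto $\Z$ would give $\Gamma'\cap\Gamma_0$, of finite index in $\Gamma_0$, with infinite abelian image. The normal core of $\Gamma'\cap\Gamma_0$ in $\Gamma_0$ is a finite index normal subgroup, so by simplicity it equals $\Gamma_0$. Thus $\Gamma_0\le\Gamma'$ maps to a finite index subgroup of $\Z$ with nontrivial image, contradicting simplicity.

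\textbf{Nonlinear just infinite case.} Here every proper quotient is finite. If $\ker\rho\neq 1$, then $\rho(\Gamma)$ is finite, a contradiction. If $\ker\rho=1$, then $\Gamma$ embeds in $\mathrm{GL}_{n+1}(\R)$, contradicting nonlinearity.

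The only point needing care is the reduction from fixed-point-free Euclidean actions to infinite image. This is where the circumcenter or barycenter of a bounded orbit supplies the fixed point, so the argument is essentially routine given Theorem \ref{thm:amenable-intro}. The main substance lies in that theorem, which we are permitted to assume.
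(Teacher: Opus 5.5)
Your proof is correct and follows essentially the same route as the paper. It reduces, via the amenable alternative (Theorem~\ref{thm:amenable-intro}), to showing property $\mathrm F\mathbb R^n$ for every $n$, using linearity of $\isom{\R^n}$, Schur's theorem for the torsion case, and residual finiteness of finitely generated linear groups (Malcev) for the other two cases; the paper additionally cites the Tits alternative there, but your case analysis shows it is not needed.
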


Groups of these three types that are infinite exist and have generated a great deal of interest, see the references in section \ref{sec:an-altern-amen}.

\subsection{Methods.}
This paper contains several new tools and novel combinations of these techniques with more standard methods. We feel that their power is well illustrated by the results obtained and we believe that they will be useful for other problems as well.

Note that contrary to many existing results, our methods do not require the action to be proper or discrete, nor do they rely on the presence of a large isometry group of the space. The main theorems and most of their consequences below are new even in the setting of proper actions.

One fundamental methodology involves random walks, even if all main results are entirely nonrandom. The zero drift case is handled by the paper of Izeki \cite{Izeki_2023}, in the form of Theorem \ref{thm:IK23} below (harmonic maps play an important role there), and for the positive
drift case, the main theorem of Karlsson-Margulis \cite{karlsson-margulis}
is applied in several ways. 

The second principal device is rescaling of the actions and metric spaces using ultra limits, for example in combination with random walks as in the proof of Theorem \ref{thm: torsion-intro}, or for Theorem \ref{thm:amenable-intro} using an argument due to Gromov (see the first paragraph of section \ref{sec:an-altern-amen}). In the proof of that latter theorem 
a result of Caprace-Lytchak \cite{caprace-lytchak} is crucial.

The block-swap lemma, Lemma \ref{lem:block-swap}, is perhaps the single most original argument.

Another novel ingredient is a mechanism for obtaining compactness of Tits boundaries and exploiting this property. 

Note that although we make extensive use of the visual boundary at infinity and Busemann functions, as is standard in the subject, all of our main results concern fixed points in the space itself rather than on the boundary. This contrasts with some parts of the existing literature.

\section{CAT(0) geometry}
\label{sec:resol-bound-fixed}
In this section, we make some preparations.
\subsection{A resolution of boundary fixed points}
Let $\omega$ be a non-principal ultrafilter on $\mathbb{N}$. Given a sequence of complete $\cat{0}$ spaces $Y_n$ with basepoints $p_n$, the associated ultralimit $(Y_\infty,p_\infty)=\omega\text{-}\lim (Y_n,p_n)$ is a complete $\cat{0}$ space \cite[Chapter II.3 Theorem 3.9]{bridson-haefliger}. If the dimensions of the spaces $Y_n$ are at most $D$, then the dimension of $Y_\infty$  is also at most $D$ \cite[Proposition 6]{izeki-karlsson}.

The following ultralimit space $Z$, associated with an isometric group action $\Gamma$ on a complete $\cat{0}$ space $Y$, is fundamental and useful in this work. One may consult \cite[Proposition 2.8]{Leeb} and \cite[Proposition 3.3]{caprace-monod1} for constructions without using ultralimits.
\begin{lem}
\label{lem:ultralimit}
Let $Y$ be a complete $\cat{0}$ space, and let $\Gamma$ be a finitely generated group acting on $Y$ via a homomorphism $\rho:\Gamma \rightarrow \isom{Y}$. Suppose that 
$\rho(\Gamma)$ fixes a point $\xi$ in $\partial Y$. Let $c:[0,\infty] \to Y$ be a geodesic ray with $c(\infty)=\xi$. Then the ultralimit 
 \begin{equation*}
 (Z,o')=\omega\text{-}\lim (Y,c(n))
 \end{equation*} 
 is a complete $\cat{0}$ space on which $\Gamma$ acts via a homomorphism $\rho':\Gamma \rightarrow \isom{Z}$ satisfying 
 \begin{equation*}
 \inf_{q \in Z} \max_{s \in S} d(q, \rho'(s) q)\geq \inf_{p \in Y} \max_{s \in S} d(p, \rho(s) p)
 \end{equation*}
 for any finite generating set $S \subset \Gamma$.
Moreover, $Z$ contains a $\rho'(\Gamma)$-invariant convex subset $Z'$ that splits isometrically as $Z_0 \times \mathbb{R}$, on which the action of $\rho'(\Gamma)$ preserves the splitting.
 \end{lem}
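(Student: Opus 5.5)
Fix a non-principal ultrafilter $\omega$ and the geodesic ray $c$ with $c(\infty)=\xi$. The plan is to define $\rho'$ pointwise on sequences by $\rho'(\gamma)[(y_n)]=[(\rho(\gamma)y_n)]$, prove that this is well defined, compare displacements directly from the definition of the ultralimit metric, and then obtain the splitting from the Busemann function of $\xi$, which becomes affine in the limit.

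\emph{Well-definedness.} We must check that the basepoint sequence has bounded displacement. Since $\rho(\gamma)$ fixes $\xi$, the ray $\rho(\gamma)c$ is asymptotic to $c$. By convexity of $t\mapsto d(c(t),\rho(\gamma)c(t))$ (asymptotic rays in a CAT(0) space), this function is non-increasing. Hence $d(c(n),\rho(\gamma)c(n))\le d(c(0),\rho(\gamma)c(0))$ for all $n$. So $\rho(\gamma)$ maps sequences at bounded distance from $(c(n))$ to such sequences, and preserves the ultralimit pseudo-distance. This gives an isometric action $\rho'$ of $\Gamma$ on $Z$, and $Z$ is a complete CAT(0) space by the cited ultralimit theorem. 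For the displacement inequality, take $q=[(y_n)]$. Then $\max_{s\in S}d(q,\rho'(s)q)=\omega\text{-}\lim_n \max_{s\in S} d(y_n,\rho(s)y_n)$, because $S$ is finite. Each term is at least $\delta(\rho)$, so the limit is too.

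\emph{Splitting.} Let $b$ be the Busemann function of $\xi$ normalized by $b(c(0))=0$. For each $\gamma$, $b\circ\rho(\gamma)^{-1}-b$ is a constant $h(\gamma)$, because $\rho(\gamma)$ fixes $\xi$. Define $b'$ on $Z$ by $b'([(y_n)])=\omega\text{-}\lim\,(b(y_n)+n)$; this is finite because $b(c(n))=-n$ and $b$ is $1$-Lipschitz. Then $b'$ is a $1$-Lipschitz convex function, and $b'\circ\rho'(\gamma)^{-1}=b'+h(\gamma)$. The key claim is that $b'$ is affine along geodesics. The reason is that, viewed from $c(n)$ with $n\to\infty$, any bounded configuration sees $\xi$ from a point far out along $c$. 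Concretely, we would compare $b$ with $x\mapsto d(x,c(n+R))-R$ for large $R$, a distance function from a point far away; in the limit, a distance function to a point at distance $\to\infty$ becomes affine on bounded sets, by comparison triangles. I expect this to be the main technical step. The comparison triangle estimate should be carried out on a uniform scale: for points within distance $L$ of $c(n)$, the defect from affineness is $O(L^2/n)$. As a fallback I would invoke the known fact that a sequence of Busemann-type functions based at points escaping to infinity converges in the ultralimit to an affine function, as in Leeb's construction.

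\emph{Building $Z'$.} The limit ray $t\mapsto[(c(n-t))]$, $t\in\R$, is a geodesic line $\ell$ in $Z$ along which $b'$ has slope $1$. The gradient lines of $b'$ should be the translates of lines parallel to $\ell$: for each $z$ in $Z$, the sequence $c_{y_n}$ of rays from $y_n$ to $\xi$, extended backward using the points $c(n-t)$, gives a line through $z$ parallel to $\ell$. I would therefore take $Z'$ to be the union of all lines parallel to $\ell$. By the flat strip theorem (Bridson--Haefliger II.2.14), $Z'$ is closed, convex, and splits as $Z_0\times\R$. Each $\rho'(\gamma)$ maps $\ell$ to a line with the same endpoint class, since it preserves $b'$ up to the constant $h(\gamma)$, and hence to a line parallel to $\ell$. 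So $Z'$ is invariant, and isometries preserving the set of lines parallel to $\ell$ preserve the product decomposition. If producing lines through every point turns out to be delicate, it is enough that $\ell\subset Z'$, so that $Z'\neq\emptyset$: invariance and the splitting then follow from the parallel-set theorem.
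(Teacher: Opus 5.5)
Your well-definedness and displacement arguments are correct and match the paper's. The gap is in the splitting step. The claim that $b'$ is affine is false, and the reason you give for $\rho'(\Gamma)$-invariance of $Z'$ does not work.

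\emph{The affineness claim fails.} Take $Y=\mathbb{H}^2$ in the upper half-plane model, $\xi=\infty$, and $c(s)=(0,e^s)$. The dilations $T_n(x,y)=(e^nx,e^ny)$ fix $\xi$ and send $c(0)$ to $c(n)$. Hence $[(y_n)]\mapsto \omega\text{-}\lim T_n^{-1}y_n$ identifies $Z$ with $\mathbb{H}^2$. Since $b\circ T_n=b-n$, your $b'$ becomes $b(x,y)=-\log y$. Along the geodesic $x^2+y^2=1$, parametrized by arclength $s$, this equals $\log\cosh s$, which is strictly convex. Comparison triangles give only one-sided (convexity) control, so no $O(L^2/n)$ affineness estimate exists: a distance function to a receding point converges to a Busemann function, not to an affine one.

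\emph{The invariance inference fails too.} Preserving $b'$ up to a constant does not force $\rho'(\gamma)\ell$ to be parallel to $\ell$. The parabolic $(x,y)\mapsto(x+1,y)$ preserves $b$ but sends $\{x=0\}$ to $\{x=1\}$, which is only asymptotic to it, not parallel. In the actual limit this parabolic becomes trivial, since $T_n^{-1}PT_n(x,y)=(x+e^{-n},y)$. So parallelism comes from the recentring, not from $b'$. In this example $Z'=\ell$, so your claim that every point of $Z$ lies on a line parallel to $\ell$ also fails. Your fallback (``it is enough that $\ell\subset Z'$'') does give the splitting, but invariance still needs an argument you have not supplied.

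\emph{The fix uses only the estimate you already proved,} applied at all real times rather than integer times. The function $s\mapsto d(c(s),\rho(\gamma)c(s))$ is non-increasing on $[0,\infty)$. So for fixed $t\in\R$ and all $n\geq t$, $d(\rho(\gamma)c(n-t),c(n-t))\le d(\rho(\gamma)c(0),c(0))$. Hence $d(\rho'(\gamma)\ell(t),\ell(t))\le d(\rho(\gamma)c(0),c(0))$ for every $t$, so $\rho'(\gamma)\ell$ is parallel to $\ell$. Parallelism is an equivalence relation, so $\rho'(\gamma)$ maps the parallel set $Z'$ of $\ell$ onto itself and sends $\R$-fibres to $\R$-fibres. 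It therefore preserves the decomposition given by Bridson--Haefliger II.2.14.

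This is exactly the paper's argument. It recentres the rays $c_\gamma=\rho(\gamma)c$ at time $n$ to obtain lines $c'_\gamma$ satisfying $\rho'(\gamma')c'_\gamma=c'_{\gamma'\gamma}$ and $d(c'_\gamma(t),c'_{\gamma'}(t))\le d(c_\gamma(0),c_{\gamma'}(0))$ for all $t\in\R$. No Busemann function is needed.
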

 \begin{proof}
Without loss of generality, we may assume that $c$ has unit speed. Take $o=c(0)$, and denote
 by $c_{\gamma}$ the unit speed geodesic ray starting from $\rho(\gamma)o$ and
 terminating at $\xi$, $\gamma \in \Gamma$. Note that $c_e=c$, where $e$ is the identity element of $\Gamma$.
 
 Since $d(c(n), \rho(\gamma)c(n))\leq d(c(0),c_{\gamma}(0))$, we
 have a well-defined homomorphism 
 $\rho'\colon \Gamma\rightarrow \isom{Z}$.  For any $q=\omega\text{-}\lim p_n\in Z$ with $p_n \in Y$, we have
 \begin{equation*}
  d(q,\rho'(\gamma)q)=\omega\text{-}\lim d(p_n, \rho(\gamma)p_n),
 \end{equation*}
this implies
\begin{equation*}
 \inf_{q \in Z} \max_{s \in S} d(q, \rho'(s) q) \geq \inf_{p \in Y} \max_{s \in S} d(p, \rho(s) p).
\end{equation*}

Let us define $c^{(n)}_{\gamma}\colon \mathbb{R} \rightarrow Y$ by
\begin{equation*}
 c^{(n)}_{\gamma}(t) =
  \begin{cases}
   c_{\gamma}(0) & t \leq -n \\
   c_{\gamma}(t+n) & t> -n.
  \end{cases}
\end{equation*}
 Note that we have, for any $t\in \R$, and $\gamma, \gamma' \in \Gamma$,
 \begin{equation}
 \label{eq:bounded}
 d(c_{\gamma}^{(n)}(t),c_{\gamma'}^{(n)}(t))\leq
  d(c_{\gamma}(0),c_{\gamma'}(0)),  
 \tag{$\star$}
 \end{equation}
 and $c_{\gamma}^{(n)}$ restricted to $[-n,\infty)$ is a geodesic ray. 
 Then
 \begin{equation*}
  \omega\text{-}\lim c_{\gamma}^{(n)}(0)=\omega\text{-}\lim
 c_{\gamma}(n)=\omega\text{-}\lim \rho(\gamma)c_e(n)=\rho'(\gamma)o', 
 \end{equation*} 
 and therefore $c'_{\gamma}\colon t\mapsto \omega\text{-}\lim c_{\gamma}^{(n)}(t)$
 is a geodesic defined on all of $\mathbb{R}$ passing through
 $\rho'(\gamma)o'$. 
 Since 
 $\rho(\gamma')c_{\gamma}^{(n)}(t)=c_{\gamma'\gamma}^{(n)}(t)$, 
 we get 
 \begin{equation*}
 \rho'(\gamma')c'_{\gamma}(t)
 =\omega\text{-}\lim \rho(\gamma')c_{\gamma}^{(n)}(t)
 =\omega\text{-}\lim c_{\gamma'\gamma}^{(n)}(t)
 = c'_{\gamma'\gamma}(t). 
 \end{equation*}

 Recalling (\ref{eq:bounded}), we see that
 $d(c'_{\gamma}(t),c'_{\gamma'}(t))\leq d(c_{\gamma}(0),c_{\gamma'}(0))$
 holds for any $t\in \mathbb{R}$, that is, 
 $c'_{\gamma}$ and $c'_{\gamma'}$ are parallel to each other. 
 We denote by $\xi'$ the point in $\partial Z$ given by
 $c'_{\gamma}(\infty)$ for any $\gamma \in \Gamma$; $\rho'(\Gamma)$
 fixes $\xi'$. 

 Let $Z'$ be the union of the images of geodesics parallel to
 $c'_e$. $Z'$ is a closed convex subset of $Z$ and hence a complete $\cat{0}$ space.
 By \cite[p.~183, 2.14 A Product Decomposition
 Theorem]{bridson-haefliger}, we have a splitting 
 $Z'=Z_0 \times \R$, and $\{z\}\times \R$
 corresponds to a geodesic parallel to $c_{e}'$ for each 
 $z\in Z_0$. This completes the proof.
 \end{proof}

 \begin{lem}
\label{lem:splitting}
 Let $Y$, $\Gamma$, $\xi$ be as in Lemma \ref{lem:ultralimit}. Assume further that
 \[\delta:= \inf_{p \in Y} \max_{s \in S} d(p, \rho(s) p)>0\]
 for some finite generating set $S\subset \Gamma$, that $\Gamma$ admits no nontrivial homomorphism into $\mathbb{R}$, and that $Y$ has finite dimension. Then there exists a complete $\cat{0}$ space $Z_0$ and a homomorphism $\rho_0:\Gamma \rightarrow \isom{Z_0}$ such that
 \vspace{-0.1cm}
\begin{itemize}
 \setlength{\itemsep}{2pt} 
 \item[{\rm (1)}] $\dim Z_0 < \dim Y$;
 \item[{\rm (2)}] $\rho_0(\Gamma)$ does not fix any point in 
	      $Z_0 \cup \partial Z_0$.
\end{itemize}
\end{lem}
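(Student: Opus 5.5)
The plan is to apply Lemma \ref{lem:ultralimit} repeatedly, each time lowering the dimension by one while keeping the joint minimal displacement at least $\delta$, and to stop as soon as we reach a space with no fixed point in the space or at infinity. The hypothesis that $\Gamma$ has no nontrivial homomorphism to $\R$ is what removes the Euclidean factor.

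\emph{Case 1: no fixed point at infinity.} Suppose first that $\rho(\Gamma)$ fixes no point of $\partial Y$. Since $\delta>0$, it also fixes no point of $Y$. Then we are done, except that we want strict inequality in (1). So the real content is the case where $\rho(\Gamma)$ fixes some $\xi\in\partial Y$. We then set up an induction that produces strictly smaller dimension at every step. This is needed anyway, because the first step already lowers the dimension.

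\emph{Case 2: the one-step reduction.} Suppose $\rho(\Gamma)$ fixes $\xi\in\partial Y$. Apply Lemma \ref{lem:ultralimit} to obtain $(Z,o')$, the action $\rho'$, and the invariant convex subset $Z'=Z_0\times\R$ whose splitting is preserved by $\rho'(\Gamma)$. The steps are as follows.
\begin{enumerate}
\item[(a)] \emph{The $\R$-factor gives a homomorphism.} Each $\rho'(\gamma)$ preserves the splitting, so it acts as $(z,t)\mapsto(\rho_0(\gamma)z,\,\tau_\gamma(t))$ with $\tau_\gamma\in\isom{\R}$. From the proof of Lemma \ref{lem:ultralimit}, $\rho'(\Gamma)$ fixes the endpoint $\xi'$ of the lines $\{z\}\times\R$. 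Hence $\tau_\gamma$ fixes $+\infty$, so it is a translation $t\mapsto t+\beta(\gamma)$, and $\beta:\Gamma\to\R$ is a homomorphism.
\item[(b)] \emph{The action on $Z_0$.} By hypothesis $\beta\equiv 0$. Therefore $d((z,t),\rho'(\gamma)(z,t))=d_{Z_0}(z,\rho_0(\gamma)z)$, and $\rho_0:\Gamma\to\isom{Z_0}$ is a homomorphism.
\item[(c)] \emph{Displacement is preserved.} Since $Z'\subset Z$, Lemma \ref{lem:ultralimit} gives
\[\inf_{z\in Z_0}\max_{s\in S} d(z,\rho_0(s)z)\;\ge\;\inf_{q\in Z}\max_{s\in S}d(q,\rho'(s)q)\;\ge\;\delta>0.\]
In particular $\rho_0(\Gamma)$ fixes no point of $Z_0$.
\item[(d)] \emph{Dimension drops.} The paper cites \cite[Proposition 6]{izeki-karlsson} for $\dim Z\le\dim Y$. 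Since $Z'\subset Z$ is convex, $\dim Z'\le \dim Z$. For a product with a line, $\dim(Z_0\times\R)=\dim Z_0+1$. Hence $\dim Z_0<\dim Y$.
\end{enumerate}

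\emph{Iteration.} If $\rho_0(\Gamma)$ fixes no point of $\partial Z_0$, we are done. Otherwise, apply the same procedure to $(Z_0,\rho_0)$. All hypotheses persist: $Z_0$ is a complete finite-dimensional $\cat{0}$ space, the displacement is at least $\delta>0$, and $\Gamma$ still has no homomorphism to $\R$. Each iteration lowers the dimension strictly, so the process must stop. A $0$-dimensional $\cat{0}$ space is a single point, and a point would be fixed, contradicting displacement $\ge\delta>0$. So we reach some $Z_0^{(k)}$ with $\dim Z_0^{(k)}<\dim Y$ on which $\Gamma$ fixes no point of $Z_0^{(k)}\cup\partial Z_0^{(k)}$.

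\emph{Expected obstacle.} I expect the main obstacle to be the dimension bookkeeping in step (d). One must make sure the notion of dimension used, as in \cite{izeki-karlsson}, is monotone under passing to closed convex subsets and additive for products with $\R$. If this needs care, I would invoke the geometric dimension of Kleiner, for which both properties are standard. Step (a) is only a minor point: fixing $\xi'$ rules out reflections on the $\R$-factor.
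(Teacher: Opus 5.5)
Your proposal is correct and follows the paper's proof. The structure is the same: apply Lemma \ref{lem:ultralimit} once, kill the translation part on the $\R$-factor using the absence of nontrivial homomorphisms $\Gamma\to\R$, pass the displacement bound $\geq\delta$ and the dimension drop to $Z_0$, and iterate until no boundary point is fixed. Your termination argument (a $0$-dimensional $\cat{0}$ space is a point, which would be fixed) rephrases the paper's final step, where an ultralimit of the $1$-dimensional stage yields a line $\R$. Your Case 1 is vacuous, since $\rho(\Gamma)$ fixing $\xi$ is part of the hypothesis; this matters, because $Z_0=Y$ would not give the strict inequality in (1).
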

\begin{proof}
Take $o \in Y$ and let $c$ be the geodesic ray starting from $o$ and terminating at $\xi$. Consider $(Z,o')=\omega\text{-}\lim (Y,c(n))$. By Lemma \ref{lem:ultralimit}, there exists a complete $\cat{0}$ space $Z'=Z_1 \times \mathbb{R} \subset Z$, on which $\Gamma$ acts via a homomorphism $\rho':\Gamma \to \isom{Z'}$. We have $\dim Z_1 < \dim Z' \leq \dim Y$.

Let $\pi: Z'=Z_1 \times \mathbb{R} \to \mathbb{R}$ be the projection onto the second factor. Since the action of $\rho'(\Gamma)$ on $Z_1 \times \mathbb{R}$ preserves the splitting (fixing the distinguished endpoint thus excluding reflections), the map
\begin{equation*}
\gamma \mapsto \pi(\rho'(\gamma)o')-\pi(o')
\end{equation*}
gives a homomorphism from $\Gamma$ into $\R$. By our assumption on $\Gamma$, this homomorphism must be trivial. It follows that $\rho'(\Gamma)$ induces an action on $Z_1$ (still denoted by $\rho'(\Gamma)$) that satisfies
\begin{equation*}
 \inf_{q \in Z_1} \max_{s \in S} d(q, \rho'(s) q)\geq \delta.
\end{equation*} 
In particular, $\rho'(\Gamma)$ does not have fixed points in $Z_1$.

Suppose that $\rho'(\Gamma)$
fixes a point in $\partial Z_1$. 
We can then repeat the procedure described above to obtain 
a complete $\cat{0}$ space $Z_2$ with $\dim Z_2 < \dim Z_1$, together with a
homomorphism $\rho'':\Gamma \rightarrow \isom{Z_2}$  satisfying
\begin{equation*}
\inf_{q \in Z_2} \max_{s \in S} d(q, \rho''(s) q)\geq \delta,
\end{equation*}
which implies that the action does not fix any point
 in $Z_2$. 
Iterating this process, in a finite number of steps, say $k$, 
we get a $\cat{0}$ space $Z_k$ on which $\Gamma$ acts without any fixed points
in $Z_k \cup \partial Z_k$.  
Otherwise, eventually we reach a one-dimensional complete $\cat{0}$ space $\tilde{Z}$ on
which $\Gamma$ acts without fixed points in $\tilde{Z}$ but  fixes a point in $\partial \tilde{Z}$. Then again by taking an ultralimit $\tilde{Z}'$ of $\tilde{Z}$, we obtain a subset $\tilde{Z}'' \subset \tilde{Z}'$ consisting of the images of parallel geodesic lines. 
Since $\dim \tilde{Z}'' \leq 1$, we have $\tilde{Z}''=\R$. Therefore, the action of $\Gamma$ on $\tilde{Z}''$, either has a fixed point, or induces a nontrivial homomorphism from $\Gamma$ to $\mathbb{R}$, both of which contradict our assumptions on $\Gamma$.
\end{proof}

\begin{rem}
 We cannot remove the assumption that $\Gamma$ does not admit a nontrivial
 homomorphism into $\R$. Indeed, let $g \in \isom{Y}$ be a hyperbolic
 element and $\Gamma =\langle g \rangle \cong \Z$.  Then $\Gamma$
translates every axis of $g$.  By the Caprace-Lytchak theorem (\cite{caprace-lytchak}) we cannot find a finite-dimensional
 $\cat{0}$ space $Z$ and a fixed-point-free action of $\Gamma \cong \Z$ on $Z$
 that does not fix any point in $\partial Z$. 
\end{rem}

\subsection{Bounded geometry and drift} 
Let $X=(X,d)$ be a metric space and let
$\varepsilon>0$. 
A subset $N$ of $X$ is called {\it $\varepsilon$-sparse} if
$d(x,x')\geq \varepsilon$ for all distinct $x,x'\in N$. 
For $A \subset X$, we denote by $n_{\varepsilon}(A)$ the maximal cardinality of an $\varepsilon$-sparse subset of $A$. 
Note that if $N\subset A$ is an $\varepsilon$-sparse subset of maximal
cardinality, then $A\subset \bigcup_{x\in N}B(x,\varepsilon)$, where
$B(x,\varepsilon)$ denotes the open ball centered at $x$ with radius
$\varepsilon$. For $r>0$ and $\varepsilon >0$, we set
\begin{equation*}
 n_{r,\varepsilon}(X)=\sup_{x\in X} n_{\varepsilon}(B(x,r)).  
\end{equation*}

\begin{dfn}[{\cite[\S 4.1]{caprace}},\cite{cavallucci-sambusetti}]
\label{defn:bdd_geometry}
A metric space $X=(X,d)$ is said to have {\it bounded geometry} if, for all
 $r>\varepsilon>0$, $n_{r,\varepsilon}(X)$ is finite. In this case, we call $n_{r,\varepsilon}(X)$ the {\it $(r,\varepsilon)$-capacity} of $X$.
\end{dfn}

We note that if a complete metric space $X$ has bounded geometry, then $X$ is proper (\cite[Lemma
 4.1(i)]{caprace}). 

 \begin{rem} \label{rem:boundedgeometry}
It is known that a $\cat{0}$ space admitting a
geometric group action always has bounded geometry (\cite[Lemma
4.1(ii)]{caprace}).   
A detailed proof can be found, for example, in \cite{toyoda}. 
\end{rem}

Let $\mu$ be a symmetric probability measure on $\Gamma$ with finite first moment, whose support generates $\Gamma$. For any isometric action $\rho:\Gamma \to \isom{X,d}$, the {\it drift $l_{\mu}(\rho)$} of $\rho(\Gamma)$ with respect to $\mu$ is defined as 
\begin{equation*}
 l_{\mu}(\rho):=\lim_{n\to \infty}
 \frac 1n \int_{\Gamma}d(x,\rho(\gamma)x) d\mu^{*n}(\gamma), 
\end{equation*}
 where  $\mu^{*n}$ denotes the $n$th convolution of $\mu$. It is well-known that the limit always exists and is independent of the choice of $x \in X$, see \cite{karlsson-linear} for more information.

The following result is analogous to Lemma \ref{lem:splitting}, where the finite-dimensional condition forces the iteration of ultralimits to terminate. We show that the same conclusion holds for spaces with bounded geometry.
\begin{lem}
  \label{lem:bounded geometry}
 Let $\Gamma$ be a finitely generated group that admits no nontrivial
 homomorphism into $\R$. Suppose that $\Gamma$ acts on 
 a complete $\cat{0}$ space $Y$ with bounded geometry via
 a homomorphism $\rho:\Gamma \rightarrow \isom{Y}$. Then there exists a complete $\cat{0}$ space $Z_0$ and a homomorphism $\rho_0:\Gamma \rightarrow \isom{Z_0}$ such that
 \vspace{-0.1cm}
\begin{itemize}
 \setlength{\itemsep}{2pt} 
 \item[{\rm (1)}] $Z_0$ has bounded geometry;
 \item[{\rm (2)}] $\rho_0(\Gamma)$ does not fix any point in 
	      $\partial Z_0$.
\item[{\rm (3)}] $\rho_0$ satisfies
  \begin{equation*}
   \inf_{q \in Z_0} \max_{s \in S} d(q, \rho_0(s) q)\geq \inf_{p \in Y} \max_{s \in S} d(p, \rho(s) p), 
 \end{equation*}
for any finite generating set $S \subset \Gamma$.
\end{itemize}
 Furthermore, 
 \vspace{-0.1cm}
\begin{itemize}
        \item[{\rm (4)}] $l_{\mu}(\rho_0) \leq l_{\mu}(\rho)$, where both drifts are computed with respect to the same symmetric probability measure with finite first moment.
 \end{itemize}
\end{lem}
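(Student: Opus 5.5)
We will follow the iteration scheme in the proof of Lemma \ref{lem:splitting}, replacing the dimension count by a capacity argument. If $\rho(\Gamma)$ fixes no point of $\partial Y$, take $Z_0=Y$. Otherwise $\rho(\Gamma)$ fixes some $\xi\in\partial Y$, and Lemma \ref{lem:ultralimit} gives $(Z,o')=\omega\text{-}\lim(Y,c(n))$ with a convex invariant subset $Z'=Z_1\times\R$ on which the action preserves the splitting. The translation part along the $\R$ factor is a homomorphism $\Gamma\to\R$, which must vanish by hypothesis. Hence $\Gamma$ acts on $Z_1$.

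For property (3) we use the displacement inequality of Lemma \ref{lem:ultralimit}. For a point $(z,t)\in Z'$, the displacement under $\rho'(s)$ equals the displacement of $z$ in $Z_1$, because the $\R$-coordinate is preserved. So the joint displacement on $Z_1$ is at least that on $Y$.

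For property (4), fix $q=\omega\text{-}\lim p_n$. Then
\[
d(q,\rho'(\gamma)q)=\omega\text{-}\lim d(p_n,\rho(\gamma)p_n)\le d(p_n,\rho(\gamma)p_n)+2d(p_n,c(0))
\]
termwise, and in fact $\le d(c(0),\rho(\gamma)c(0))$ for the choice $p_n=c(n)$. Integrating against $\mu^{*n}$ and dividing by $n$ gives $l_\mu(\rho_1)\le l_\mu(\rho)$, using the independence of the basepoint.

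For property (1), recall that $n_{r,\varepsilon}$ is not increased by passing to ultralimits. An $\varepsilon$-sparse set in a ball of $Z$ comes from points that are $\omega$-almost surely $(\varepsilon-\eta)$-sparse in a ball of radius $r+\eta$ in $Y$. Hence $n_{r,\varepsilon}(Z)\le n_{r+\eta,\varepsilon-\eta}(Y)$. Taking convex subsets and factors of a product only decreases capacities, so $Z_1$ has bounded geometry, and moreover $n_{r,\varepsilon}(Z_1)\le n_{r+\eta,\varepsilon-\eta}(Y)$. Completeness and the CAT(0) property pass to $Z_1$ as well.

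We then iterate this construction while boundary fixed points persist, obtaining $Z_1,Z_2,\dots$. The main obstacle is termination, since dimension may not be available here (no geodesic completeness is assumed). Our plan is to use capacities as the decreasing quantity. Each step splits off a line: $Z_k\times\R$ sits convexly in an ultralimit of $Z_{k-1}$. A product with $\R$ gains sparse points along the line. Concretely, a sparse set $N$ in a ball of radius $r/2$ in $Z_k$, crossed with about $r/\varepsilon$ points on $\R$, gives
\[
n_{r,\varepsilon}(Z_{k-1})\gtrsim (r/\varepsilon)\, n_{r/2,\varepsilon}(Z_k).
\]
After $k$ steps, with the parameters tracked carefully, $n_{r,\varepsilon}(Y)$ must be at least roughly $(r/(2\varepsilon))^k$ times a lower bound of 1. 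Fixing $r=4\varepsilon$, say, bounds $k$ by a function of $n_{r',\varepsilon'}(Y)$ for slightly perturbed parameters. It is this finiteness of $n_{r,\varepsilon}(Y)$ that forces the process to stop.

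Alternatively, we could pass to the limit $Z_\infty$ after infinitely many steps via a diagonal ultralimit. That limit would contain isometric copies of $\R^k$ for all $k$, contradicting bounded geometry, since $\R^k$ has unbounded $(2,1)$-capacity as $k$ grows. We expect this second route to be cleaner. Making the nested embeddings $Z_k\times\R^{k}\hookrightarrow$ ultralimits of $Y$ precise, and checking that iterated ultralimits are again ultralimits of $Y$ with controlled capacity, is the delicate step.

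When the process stops at some $Z_k$ with no fixed boundary point, we set $Z_0=Z_k$. Properties (1)–(4) hold at each stage, and the inequalities in (3) and (4) compose across steps.
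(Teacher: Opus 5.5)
Your one-step analysis matches the paper: the splitting from Lemma~\ref{lem:ultralimit}, killing the $\R$-translation, and properties (1), (3), (4) for $Z_1$. In (4), the bound with $+2d(p_n,c(0))$ is vacuous. What you actually use is $d(c(n),\rho(\gamma)c(n))\le d(c(0),\rho(\gamma)c(0))$, together with $d(o_1,\rho'(\gamma)o_1)=d(o',\rho'(\gamma)o')$ for $o'=(o_1,0)$.

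The gap is termination, which you rightly flag as the main obstacle, and your primary argument for it fails. The estimate $n_{r,\varepsilon}(Z_{k-1})\gtrsim (r/\varepsilon)\,n_{r/2,\varepsilon}(Z_k)$ halves the radius. Since $n_{r/2,\varepsilon}(Z_k)$ is not bounded below by $n_{r,\varepsilon}(Z_k)$, the next step must be run at radius $r/2$, then $r/4$, and so on. Once the radius falls below $\varepsilon$, the $\R$-direction contributes a factor $1$ and the chain yields nothing more; with $r=4\varepsilon$ it controls only about two steps, not $k$. Letting $r$ depend on $k$ gives only $n_{2^k\varepsilon,\varepsilon}(Y)\gtrsim 2^{k(k+1)/2}$. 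That inequality holds for every $k$ in $\mathbb{H}^2$ or a regular tree, where capacities grow exponentially in the radius. Bounded geometry only asserts that each capacity is finite, so this inequality cannot bound $k$.

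What closes the gap is an additive gain at essentially fixed scale, as in the paper. If $\{x_1,\ldots,x_N\}$ is $\varepsilon'$-sparse in $B(q,r')\subset Z_k$, then $\{(x_i,0)\}\cup\{(q,\pm\varepsilon')\}$ is $\varepsilon'$-sparse in $B((q,0),r')\subset Z_k\times\R$; only $\varepsilon'<r'$ is needed. Pulling back to $Z_{k-1}$ costs an arbitrarily small loss in the parameters, and spreading these losses over the steps yields $n_{r,\varepsilon}(Y)\ge 2k$.

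Your second route is the same idea in different packaging: the points $0,\pm e_1,\ldots,\pm e_k$ give $n_{2,1}(\R^k)\ge 2k+1$. It does work, but no diagonal limit over infinitely many steps is needed. For each $k$ it suffices to check two things:
\begin{enumerate}
\item $Z_k\times\R^k$ embeds isometrically in a $k$-fold iterated ultralimit $W_k$ of $Y$, using that the ultralimit of $Z\times\R^j$ based at $(z_n,0)$ is $\omega\text{-}\lim(Z,z_n)\times\R^j$;
\item $n_{r,\varepsilon}(\omega\text{-}\lim X_n)\le\sup_n n_{r,\varepsilon-\eta}(X_n)$ for all $\eta>0$.
\end{enumerate}
Together these give $2k+1\le n_{2,1}(W_k)\le n_{2,1/2}(Y)$. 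As submitted, though, these verifications are precisely what you defer as ``the delicate step,'' so termination is not yet established.
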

\begin{proof}
If $\rho(\Gamma)$ does not fix any point in $\partial Y$, then the lemma is proved by setting $Z_0=Y$ and $\rho_0=\rho$.

It remains to consider the case that $\rho(\Gamma)$ fixes a point $\xi \in \partial Y$.
Let $c:[0,\infty] \to Y$ be a geodesic ray with $c(0)=o$ and $c(\infty)=\xi$. Consider the ultralimit $(Z,o')=\omega\text{-}\lim (Y,c(n))$.

By Lemma \ref{lem:ultralimit}, there exists a complete $\cat{0}$ space $Z'=Z_1 \times \mathbb{R} \subset Z$, on which $\Gamma$ acts via a homomorphism $\rho':\Gamma \rightarrow \isom{Z'}$ that preserves the splitting. Since $\Gamma$ admits no nontrivial homomorphism into $\mathbb{R}$, the action of $\rho'(\Gamma)$ on the $\mathbb{R}$-factor is trivial. Therefore, $\rho'(\Gamma)$ induces an isometric action of $\Gamma$ on $Z_1$, which we still denote by $\rho'$. Write $o'=(o_1,0)$ with $o_1 \in Z_1$.

Repeating the argument in the proof of Lemma \ref{lem:ultralimit} yields that 
\begin{equation*}
\inf_{q \in Z_1} \max_{s \in S} d(q, \rho'(s) q)\geq \inf_{p \in Y} \max_{s \in S} d(p, \rho(s) p).
\end{equation*} 
Moreover, we have $d(o_1,\rho'(\gamma) o_1)=d(o',\rho'(\gamma) o')\leq d(o,\rho(\gamma) o)$ for every $\gamma \in \Gamma$. It follows that
\begin{eqnarray*}
l_{\mu}(\rho')&=& \lim_{n\to \infty} 
 \frac{\int_{\Gamma}d(o_1,\rho'(\gamma)o_1) d\mu^{*n}(\gamma)}{n}\\
 &\leq& \lim_{n\to \infty} 
 \frac{\int_{\Gamma}d(o,\rho(\gamma)o) d\mu^{*n}(\gamma)}{n}=l_{\mu}(\rho).
\end{eqnarray*}

For any $r>r'>\varepsilon'>\varepsilon>0$ and any $q \in Z_1$, let $\{x_1,\cdots,x_N\} \subset B(q,r')$ be an $\varepsilon'$-sparse set in $Z_1$. Set $z_i=(x_i,0) \in Z', 1\leq i \leq N$ and $z_{N+1}=(q,\varepsilon'), z_{N+2}=(q,-\varepsilon') \in Z'$. It is easy to verify that $\{z_1,\cdots,z_{N+2}\}$ is an $\varepsilon'$-sparse set in $B((q,0),r') \subset Z'$. It follows from the definition of ultralimits that we can find in $Y$ an $\varepsilon$-sparse set of cardinality $N+2$, contained in a ball of radius $r$. Since $Y$ has bounded geometry, let $n_{r,\varepsilon}=n_{r,\varepsilon}(Y)$ be its $(r,\varepsilon)$-capacity, then we have
$$N+2 \leq n_{r,\varepsilon},$$ which implies that $Z_1$ has bounded geometry and its $(r',\varepsilon')$-capacity is at most $n_{r,\varepsilon}-2$. 

The space $Z_1$ associated with the action $\rho'$ satisfies all conditions of a complete CAT(0) space required in Lemma \ref{lem:bounded geometry} except possibly condition (2). If $\rho'(\Gamma)$ does not fix any point in $\partial Z_1$, then we are done by setting $Z_0=Z_1$; otherwise, we can iterate the preceding process described above. At step $k \leq n_{r,\varepsilon}$, suppose that there exists a $(\varepsilon+\frac{k(\varepsilon'-\varepsilon)}{n_{r,\varepsilon}})$--sparse set of cardinality $K$ contained in a ball of radius $\left(r-\frac{k(r-r')}{n_{r,\varepsilon}}\right)$ in $Z_k$. Then by the discussion in the preceding paragraph, we can find a $\left(\varepsilon+\frac{(k-1)(\varepsilon'-\varepsilon)}{n_{r,\varepsilon}}\right)$--sparse set of cardinality $K+2$ contained in a ball of radius $\left(r-\frac{(k-1)(r-r')}{n_{r,\varepsilon}}\right)$ in $Z_{k-1}$. Iterating backward to $k=1$ we obtain
\begin{equation*}
n_{r,\varepsilon} \geq K+2k,
\end{equation*}
which yields $$k \leq n_{r,\varepsilon}/2,$$ and hence the iteration terminates in finitely many steps. We eventually obtain a complete $\cat{0}$ space $Z_0$ associated with an isometric action $\rho_0:\Gamma \rightarrow \isom{Z_0}$ that fulfills all conditions $(1)-(4)$.
\end{proof}
\subsection{Existence of almost fixed points}
Let $\Gamma$ be a finitely generated group. 
We say that $\Gamma$ has the {\it fixed-point property for $\R^n$} if any isometric
action of $\Gamma$ on $\R^n$ has a fixed point. Suppose $\Gamma$ acts on a metric space $X=(X,d)$ via a homomorphism 
$\rho \colon \Gamma \rightarrow \isom{X}$. Set
\begin{equation*}
 \delta (\rho):=\inf_{x \in X} \max_{s \in S} d(x, \rho(s)x). 
\end{equation*}
We say that $\rho(\Gamma)$ has {\it almost fixed points} if $\delta(\rho)=0$ for some (hence any) finite generating set $S$ of $\Gamma$. 

The following formulation of a result of Izeki \cite[Theorem A]{Izeki_2023} is essential for the proofs in this subsection.
\begin{thm}
\label{thm:IK23}
Let $Y$ be a complete $\cat{0}$ space that is either proper or of finite dimension, and $\Gamma$ a finitely generated group equipped with a symmetric generating probability measure $\mu$ with finite second moment. Suppose that $\Gamma$ acts on $Y$ via a homomorphism $\rho: \Gamma \rightarrow \isom{Y}$ satisfying $l_{\mu}(\rho)=0$. Then either $\rho(\Gamma)$ fixes a point in $\partial Y$, or it preserves a flat subspace of $Y$.
\end{thm}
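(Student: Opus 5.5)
The plan is to obtain Theorem \ref{thm:IK23} by specializing Izeki's Theorem A \cite{Izeki_2023} to the present setting. The only work is to check that its hypotheses hold, and that its conclusion becomes the dichotomy ``fixed point in $\partial Y$ or invariant flat''. I would argue by contraposition. Assume $\rho(\Gamma)$ fixes no point of $\partial Y$; the goal is then to produce an invariant flat from $l_\mu(\rho)=0$.

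\emph{Step 1: an equivariant harmonic point.} Consider the $\mu$-energy $E(x)=\frac12\int_\Gamma d(x,\rho(\gamma)x)^2\,d\mu(\gamma)$. It is finite because $\mu$ has finite second moment and $\Gamma$ is finitely generated, so $d(x,\rho(\gamma)x)\le C|\gamma|_S$. Take a minimizing sequence $x_k$.
\begin{itemize}
\item If $(x_k)$ is unbounded, then, since the displacements $d(x_k,\rho(s)x_k)$ stay bounded for $s\in S$, a standard argument produces a $\rho(\Gamma)$-fixed point in $\partial Y$. In the proper case this follows by passing to a limit of the geodesics $[x_0,x_k]$. In the finite-dimensional case one uses the Caprace--Lytchak theorem \cite{caprace-lytchak} on nested bounded-displacement sets. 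This case is excluded by assumption.
\item Hence $(x_k)$ is bounded. Convexity of $d^2$ together with the CAT(0) midpoint inequality then forces $(x_k)$ to be Cauchy, so it converges to a minimizer $x_0$.
\end{itemize}
The minimizer $x_0$ is $\mu$-harmonic: it is the barycenter of the pushforward measure $\rho_*\mu\cdot x_0$.

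\emph{Step 2: drift versus energy.} Let $Z_n$ be the $\mu$-random walk. Harmonicity and the CAT(0) comparison inequality for barycenters give a lower bound
\[
\mathbb E\, d(x_0,\rho(Z_n)x_0)^2 \ \ge\ 2nE(x_0),
\]
with equality exactly when all the comparison triangles involved are flat. On the other hand, the hypothesis $l_\mu(\rho)=0$, combined with the Karlsson--Margulis theorem \cite{karlsson-margulis} and the finite second moment, shows that $d(x_0,\rho(Z_n)x_0)$ is sublinear in a sense strong enough to control the second moment up to $o(n^2)$. Izeki's key point is that this sublinearity forces equality in the barycenter inequality at every step. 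That equality in turn makes the map $\gamma\mapsto\rho(\gamma)x_0$ ``affine''.

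\emph{Step 3: from affine to flat.} Equality in the CAT(0) inequality along all the relevant triangles means that the closed convex hull of the orbit $\rho(\Gamma)x_0$ has flat comparison geometry. By the flat strip and flat triangle theorems, this hull is isometric to a convex subset of a Euclidean space. Finite dimensionality, or properness, ensures that this Euclidean space is a finite-dimensional flat. The flat can be taken to be the affine span of the orbit, which is $\rho(\Gamma)$-invariant.

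The main obstacle is Step 2. Turning zero \emph{first-moment} drift into rigidity of the \emph{second-moment} energy inequality is precisely the content of \cite[Theorem A]{Izeki_2023}. I would therefore invoke it directly rather than reprove it. In the finite-dimensional but non-proper case, the additional care needed is the existence of the minimizer in Step 1, which relies on \cite{caprace-lytchak}.
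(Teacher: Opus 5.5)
Your approach is the paper's: the paper gives no proof of this statement and simply records it, for both proper and finite-dimensional $Y$, as a formulation of Izeki's Theorem A \cite{Izeki_2023}, so invoking that theorem as a black box is the whole argument, and no separate treatment of the finite-dimensional case (via \cite{caprace-lytchak}) is needed from you. Your Steps 1--3 are therefore superfluous, and as written they are not accurate: $E$ is convex but not uniformly convex (for $\Z/2$ acting on the strip $\R\times[0,1]$ by $(t,u)\mapsto(-t,u)$ there is no fixed point at infinity, yet $E$ does not depend on $u$), so a bounded minimizing sequence need not be Cauchy and a minimizer should instead be obtained from the nested bounded closed convex sublevel sets; moreover, a CAT(0) space has no ``affine span'', so in Step 3 the flatness of the orbit hull does not by itself produce an invariant flat.
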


If $\rho(\Gamma)$ has a fixed point in $Y$, then $l_\mu(\rho)=0$ for every probability measure $\mu$ on $\Gamma$ since the orbit is bounded. The following two results relate the vanishing drift to the existence of almost fixed points.

\begin{prop}
\label{lem:strongly_fixed}
 Let $\Gamma$ be a finitely generated group with
 fixed-point property for all finite-dimensional Euclidean spaces, and let $Y$ be a complete
 $\cat{0}$ space with bounded geometry. Suppose that $\rho: \Gamma \rightarrow \isom{Y}$ is a homomorphism such that $$l_{\mu}(\rho)=0$$ for some symmetric generating probability  measure $\mu$ on $\Gamma$ with finite second moment. Then $\rho(\Gamma)$ has almost fixed points. 
 \end{prop}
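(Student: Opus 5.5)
Argue by contradiction: assume $\delta(\rho)>0$ for a fixed finite generating set $S$, and aim to produce a fixed-point-free isometric action of $\Gamma$ on some $\R^n$. Property $\mathrm F\mathbb R^{<\infty}$ implies $\mathrm F\mathbb R^1$, so every homomorphism $\Gamma\to\R$ is trivial, since a nontrivial one gives a fixed-point-free translation action on $\R$. Thus $\Gamma$ admits no nontrivial homomorphism into $\R$, and Lemma \ref{lem:bounded geometry} applies. It yields a complete $\cat{0}$ space $Z_0$ of bounded geometry and an action $\rho_0$ with the following properties: $\rho_0(\Gamma)$ fixes no point of $\partial Z_0$; $\delta(\rho_0)\ge\delta(\rho)>0$; and $l_\mu(\rho_0)\le l_\mu(\rho)=0$, so $l_\mu(\rho_0)=0$.

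Next apply Theorem \ref{thm:IK23} to $\rho_0$. Since $Z_0$ is complete with bounded geometry, it is proper (\cite[Lemma 4.1(i)]{caprace}). The measure $\mu$ is symmetric, generating and has finite second moment, and the drift vanishes. Hence $\rho_0(\Gamma)$ either fixes a point of $\partial Z_0$ or preserves a flat $F\subset Z_0$. The first alternative is excluded by construction, so $\rho_0(\Gamma)$ preserves a flat $F\cong\R^k$. Bounded geometry forces $k<\infty$, because an infinite-dimensional Euclidean ball contains arbitrarily large $\varepsilon$-sparse sets. Restricting gives an isometric action of $\Gamma$ on $\R^k$.

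By property $\mathrm F\mathbb R^{k}$, this restricted action has a fixed point $x\in F\subset Z_0$. Then $\max_{s\in S}d(x,\rho_0(s)x)=0$, which contradicts $\delta(\rho_0)\ge\delta(\rho)>0$. Hence $\delta(\rho)=0$, meaning $\rho(\Gamma)$ has almost fixed points.

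The main obstacle is checking the hypotheses of Theorem \ref{thm:IK23} on $Z_0$ rather than on $Y$: in particular that $Z_0$ is proper, and that "preserves a flat" means a genuine finite-dimensional Euclidean subspace, possibly a point, invariant under $\rho_0(\Gamma)$. If $k=0$, the flat is a fixed point and the contradiction is immediate. A second point to verify is that the drift inequality in Lemma \ref{lem:bounded geometry}(4) survives the finitely many iterations inside that lemma's proof. This follows since each step satisfies $d(o_1,\rho'(\gamma)o_1)\le d(o,\rho(\gamma)o)$.
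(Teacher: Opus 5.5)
Your proof is correct and follows the paper's argument. You assume $\delta(\rho)>0$, use the absence of nontrivial homomorphisms to $\R$ to invoke Lemma \ref{lem:bounded geometry}, apply Theorem \ref{thm:IK23} to $\rho_0$ on the proper space $Z_0$, and use $\mathrm{F}\mathbb{R}^k$ on the invariant flat to contradict $\delta(\rho_0)\ge\delta(\rho)>0$. The only difference is organizational: the paper first splits into the cases ``invariant flat in $Y$'' and ``fixed point in $\partial Y$'' before invoking the lemma, whereas you let the lemma absorb the first case via $Z_0=Y$ and spell out the final contradiction that the paper compresses.
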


\begin{proof}
Recall that a CAT(0) space of bounded geometry is always proper. By Theorem \ref{thm:IK23}, either $\rho(\Gamma)$ preserves a flat subspace of $Y$, in which case the fixed-point property for Euclidean spaces yields a global fixed point in $Y$, or $\rho(\Gamma)$ fixes a point in $\partial Y$. It remains to consider the latter case.

Suppose for contradiction that $\rho(\Gamma)$ does not admit almost fixed points:
\begin{equation*}
 \inf_{x \in Y} \max_{s \in S} d(x, \rho(s)x)\geq \delta>0. 
\end{equation*}
Note that $\Gamma$ does not admit any nontrivial homomorphisms into $\R$ according to our assumption. Then we can apply Lemma~\ref{lem:bounded geometry} to obtain a complete $\cat{0}$ space $Z$ with bounded geometry and a homomorphism 
$\rho'\colon \Gamma \rightarrow \isom{Z}$ such that
$\rho'(\Gamma)$ does not fix a point in $Z \cup \partial Z$.

On the other hand, by Lemma~\ref{lem:bounded geometry} we have $l_{\mu}(\rho')=0$. Applying Theorem \ref{thm:IK23} to $\rho'$ it follows that $\rho'(\Gamma)$ has a fixed point in $Z \cup \partial Z$, which yields a contradiction.
\end{proof}

In the finite-dimensional case, the following analog holds. 
\begin{prop}
\label{lem:strongly_fixed_dim}
 Let $\Gamma$ be a finitely generated group with the
 fixed-point property for $\R^n$, and $Y$ a complete $n$-dimensional $\cat{0}$ space. 
 Suppose that $\rho: \Gamma \rightarrow \isom{Y}$ is a homomorphism
 and that there exists a symmetric generating probability measure $\mu$
 with finite second moment such that the drift $l_{\mu}(\rho)$ of $\rho(\Gamma)$ with respect to $\mu$ is equal to zero. 
 Then $\rho(\Gamma)$ has almost fixed points, namely $\delta(\rho)=0$. 
 \end{prop}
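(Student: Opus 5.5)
We follow the proof of Proposition \ref{lem:strongly_fixed}, with Lemma \ref{lem:splitting} in place of Lemma \ref{lem:bounded geometry}; finite dimension now plays the role that bounded geometry played there.

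\textbf{Step 1: apply Theorem \ref{thm:IK23}.} Since $Y$ is finite-dimensional and $l_\mu(\rho)=0$, either $\rho(\Gamma)$ preserves a flat subspace $F\subset Y$, or it fixes a point of $\partial Y$. In the first case $\dim F\le n$, and $F$ is isometric to some $\R^m$ with $m\le n$. Property $\mathrm F\mathbb R^n$ implies $\mathrm F\mathbb R^m$, so the restricted action on $F$ has a fixed point. Hence $\delta(\rho)=0$.

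\textbf{Step 2: set up the contradiction.} Suppose $\rho(\Gamma)$ fixes $\xi\in\partial Y$ and, for contradiction, $\delta(\rho)=\delta>0$. We need $\Gamma$ to admit no nontrivial homomorphism into $\R$. Any such homomorphism $\phi$ gives the translation action $\gamma\cdot x=x+\phi(\gamma)$ on $\R\subset\R^n$, and this action has no fixed point. Since $n\ge 1$, property $\mathrm F\mathbb R^n$ excludes it. So the hypotheses of Lemma \ref{lem:splitting} hold, and we obtain a complete $\cat{0}$ space $Z_0$ and $\rho_0:\Gamma\to\isom{Z_0}$ with $\dim Z_0<n$ and no fixed point of $\rho_0(\Gamma)$ in $Z_0\cup\partial Z_0$.

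\textbf{Step 3: check the drift of $\rho_0$ — the main point.} To reach a contradiction via Theorem \ref{thm:IK23} we need $l_\mu(\rho_0)=0$. Lemma \ref{lem:splitting} does not state this, so we verify it from its construction. At each stage the new space is a convex subset $Z_1\times\R$ of an ultralimit $\omega\text{-}\lim(Y,c(n))$, and the action on $Z_1$ is the restriction. As in the proof of Lemma \ref{lem:bounded geometry}, for the basepoint $o_1$ with $o'=(o_1,0)$ we have
\[
d(o_1,\rho'(\gamma)o_1)=d(o',\rho'(\gamma)o')\le d(o,\rho(\gamma)o).
\]
Here the middle equality uses that the $\R$-component of the action is trivial. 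Integrating against $\mu^{*k}$, dividing by $k$ and letting $k\to\infty$ gives $l_\mu(\rho')\le l_\mu(\rho)$. Iterating this over the finitely many steps of Lemma \ref{lem:splitting} (one must still check that the iteration there genuinely stops at a space with no boundary fixed point) yields $0\le l_\mu(\rho_0)\le l_\mu(\rho)=0$.

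\textbf{Step 4: conclude.} $Z_0$ is a complete finite-dimensional $\cat{0}$ space and $l_\mu(\rho_0)=0$, so Theorem \ref{thm:IK23} applies. Either $\rho_0(\Gamma)$ fixes a point of $\partial Z_0$, contradicting the construction, or it preserves a flat $F_0\subset Z_0$. In the latter case $\dim F_0<n$, so $\mathrm F\mathbb R^n$ gives a fixed point in $F_0\subset Z_0$, again a contradiction. Hence $\delta(\rho)=0$.
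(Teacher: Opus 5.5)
Your proof is correct and follows essentially the same route as the paper: reduce via Lemma~\ref{lem:splitting} to a lower-dimensional space with no fixed point in $Z_0\cup\partial Z_0$, carry the zero drift through the ultralimits as in Lemma~\ref{lem:bounded geometry}, and then produce an invariant flat that contradicts $\mathrm F\mathbb R^n$. The only difference is packaging. The paper obtains the flat from an equivariant $\mu$-harmonic map together with \cite[Proposition 11]{izeki-karlsson}, where you invoke Theorem~\ref{thm:IK23} directly, and your parenthetical worry about the iteration terminating is already covered by the statement of Lemma~\ref{lem:splitting}.
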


\begin{proof}
 Assume first that $\rho(\Gamma)$ fixes a point in $\partial Y$ and does not admit almost fixed points:
\begin{equation*}
 \inf_{x \in Y} \max_{s \in S} d(x, \rho(s)x)= \delta>0. 
\end{equation*}
Note that $\Gamma$ does not admit any nontrivial homomorphisms into $\R$ according to our assumption. 
Then we can apply Lemma~\ref{lem:splitting} to see that there exists a $\cat{0}$ space $Z$ with $\dim Z < \dim Y$ and a homomorphism 
$\rho'\colon \Gamma \rightarrow \isom{Z}$ such that
$\rho'(\Gamma)$ does not fix a point in $\partial Z$.  
We also have 
\begin{equation}
\label{eq:translation_bound}
 \inf_{z \in Z} \max_{s \in S} d(z, \rho'(s)z)\geq \delta >0. 
\tag{$\star \star$}
\end{equation}
Using an argument similar to that of Lemma~\ref{lem:bounded geometry}, it is not hard to see that $l_{\mu}(\rho')=0$. 

 Since $Z$ is finite-dimensional and $\rho'(\Gamma)$ does not fix a
 point in $\partial Z$, there exists a $\rho'$-equivariant
 $\mu$-harmonic map $f\colon \Gamma \rightarrow Z$. 
 Then, by \cite[Proposition 11]{izeki-karlsson}, the convex hull $F$ of
 $f(\Gamma)$ is isometric to $\R^k$ for some $k\leq \dim Z \leq n$.
 Since $F$ is invariant under the action of  $\rho'(\Gamma)$ and $\rho'$ satisfies (\ref{eq:translation_bound}),
 $\rho'(\Gamma)$ does not fix a point in $F$, which contradicts the assumption that $\Gamma$ has the fixed-point property
 for $\R^n$. Therefore, $\rho(\Gamma)$ must have almost fixed points. 

 Now suppose that $\rho(\Gamma)$ does not fix a point in $\partial Y$. 
 Then, since $l_{\mu}(\rho)=0$, again by \cite[Proposition 11]{izeki-karlsson}, the convex hull $F$ of $f(\Gamma)$ is isometric to $\R^k$ for some $k\leq n=\dim Y$. 
 However, since $\Gamma$ has the fixed-point property for $\R^n$, $\rho(\Gamma)$ must fix a point in $F$; this means that $f$ is a constant map and $f(\Gamma)$ is actually a point in $Y$.  In particular, $\delta(\rho)=0$. 
\end{proof}

\section{Torsion groups acting on CAT(0) spaces}
\label{sec:torsion}
\subsection{Overview}
The question of to what extent finitely generated torsion groups can act without fixed point is an old one, as discussed in subsection \ref{ssec:mainresults}. The folklore conjecture has been that, under some conditions, this should not be possible. Recently, Norin, Osajda, and Przytycki made the following precise conjecture: 

\begin{conjecture} \label{conj:nop} \emph{\cite[Conjecture 1.5]{norin-osajda-przytycki}} 
Every finitely generated group acting without a global fixed point on a finite-dimensional $\cat{0}$ complex $Y$ contains an element of infinite order.
\end{conjecture}

See also \cite{haettel-osajda} for more information and a larger context.
In dimension 1 it was proved by Serre in \cite[section 6.5]{serre} and in dimension 2 by Norin-Osajda-Przytycki in \cite{norin-osajda-przytycki} (with a few very mild extra technical assumptions). 

The results of the present paper settle substantial parts of this conjecture:
\begin{itemize}
\item torsion subgroups of a CAT(0) group (Theorem \ref{cor:Swenson-intro} - Swenson's question)
    \item bounded exponent (Theorem \ref{thm: torsion-intro})
    \item amenable groups (Corollary \ref{cor:C})
    \item when $Y$ is a visibility space (Theorem \ref{thm:visibility_fixed_pt})
    \item when $Y$ has compact Tits boundary (Corollary \ref{cor:Titscompacttorsion})
    \item when a geometric Berger-Wang identity holds (Corollary \ref{cor:BergerWang})
\end{itemize}
None of these assumes that $Y$ is a complex, on the other hand, the last three entries assume that the space is geodesically complete and of bounded geometry. The conjecture was established in \cite{izeki-karlsson} in the class of groups with the Liouville property which is a subset of amenable groups.

\subsection{A key lemma} In this subsection, we establish the following lemma, which is essential in the proofs of Theorem \ref{cor:Swenson-intro} and \ref{thm: torsion-intro}.

Let $\Gamma$ be a finitely generated group for which there exists $B<\infty$ such
that  $\mathrm{ord}(g)\leq B$ for every $g\in \Gamma $.
The key ingredient is the following block-swap argument:

\begin{lem}[Bounded-order block-swap lemma]\label{lem:block-swap}
Let a countable group $G$ act by isometries on a complete CAT(0) space $Y$.  Assume
that there exists $B<\infty$ such that
\[
  \mathrm{ord}(g)\leq B\qquad\text{for every }g\in G.
\]
Then

\[
 l_{\mu}(G)=0
\]
for every finitely supported probability measure $\mu$ on $G$.
\end{lem}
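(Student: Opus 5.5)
The plan is to argue by contradiction: assume $l:=l_\mu(G)>0$ and use the multiplicative ergodic theorem of Karlsson--Margulis \cite{karlsson-margulis}. Let $Z_n=g_1g_2\cdots g_n$ be the random walk with i.i.d.\ $\mu$-distributed increments and fix $o\in Y$. Karlsson--Margulis gives, for almost every sample path, a geodesic ray $\gamma_\omega$ from $o$ with $d(Z_n o,\gamma_\omega(ln))=o(n)$, hence a random limit point $\xi(\omega)\in\partial Y$. Equivalently, $-\frac1n b_{\xi}(Z_no)\to l$ for the Busemann function $b_\xi$ normalised at $o$.

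Next I would exploit the equivariance $\xi(\omega)=Z_n\,\xi(\theta^n\omega)$, where $\theta$ is the shift. Write $Z_{Bn}=h_1h_2\cdots h_B$ with $h_i=g_{(i-1)n+1}\cdots g_{in}$; the blocks are i.i.d.\ with law $\mu^{*n}$. Since $\mu$ is finitely supported, each $h_i$ ranges over a finite set.

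The \emph{block-swap} step uses that the joint law of $(h_1,\dots,h_B)$, and of the associated limit points, is invariant under permuting the blocks. Fix $\varepsilon>0$ and $n$ large. Call a block $h$ \emph{good} if there is a point $\eta\in\partial Y$ with $b_{h\eta}(ho)\le -(l-\varepsilon)n$ and such that the shifted walk started after the block converges to $\eta$. By Karlsson--Margulis and the cocycle identity $b_{\xi}(Z_{kn}o)=\sum_i b_{\xi_{i-1}}(h_i o)$ (with $\xi_{i-1}$ the limit of the walk shifted by $(i-1)n$), all $B$ blocks are good simultaneously with probability tending to $1$. Because $\mu^{*n}$ is finitely supported, some fixed good element $h$ has positive probability. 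By independence, the event $h_1=\cdots=h_B=h$ then has positive probability, and on it $Z_{Bn}=h^B=e$.

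On that event the Busemann cocycle should force $b_\xi(Z_{Bn}o)\le -B(l-\varepsilon)n+o(n)$, contradicting $Z_{Bn}o=o$. This is exactly where care is needed. Karlsson--Margulis only controls events of probability close to $1$, while the event $\{h_1=\cdots=h_B=h\}$ has probability exponentially small in $n$, so the almost-sure tracking estimate cannot be transferred to it directly.

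I would handle this by conditioning on the blocks. Conditioned on $h_1,\dots,h_B$, the future walk after time $Bn$ is independent, so $\xi_{i}$ depends only on $h_{i+1},\dots,h_B$ and the future. I would then choose $h$ so that the \emph{conditional} probability, given the next block equals $h$, that $b_{h\eta}(ho)\le-(l-\varepsilon)n$ is at least $1/2$. Markov/averaging over the finite support ensures such an $h$ exists, with probability weight bounded below. Iterating this conditioning backward from block $B$ to block $1$ should give a positive-probability event on which all $B$ Busemann increments are $\le -(l-\varepsilon)n$ while $h^B=e$. Summing the increments yields $0=b_\xi(o)\le -B(l-\varepsilon)n$, which is the desired contradiction. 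The delicate point, and the main obstacle, is making this backward conditioning consistent, namely ensuring that the good limit point for the $i$-th copy of $h$ is the image under $h$ of the good one for the $(i+1)$-st.
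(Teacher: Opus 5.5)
There is a genuine gap, and it is exactly the step you flag as delicate; everything before it is routine. A small side point first: $\mathrm{ord}(h)\le B$ does not give $h^B=e$, so you should use $q=\mathrm{ord}(h)$ blocks, or $B!$ blocks.

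\textbf{Where the argument breaks.} Let $\eta_i$ be the limit point of the walk shifted past the first $i$ blocks. Then $\eta_{i-1}=h_i\eta_i$, and the $i$-th increment is $b_{h_i\eta_i}(h_io)$. Put $A_h=\{\zeta\in\partial Y: b_{h\zeta}(ho)\le-(l-\varepsilon)n\}$. On the event $\{h_1=\cdots=h_q=h\}$ the limit points are forced to be $\eta_i=h^{q-i}\eta_q$, where $\eta_q$ has the hitting measure $\nu$ and is independent of the blocks. So what you actually need is $\nu\bigl(\bigcap_{j=0}^{q-1}h^{-j}A_h\bigr)>0$ for one deterministic $h$.

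Karlsson--Margulis together with shift-equivariance only says that $b_{h\zeta}(ho)\le-(l-\varepsilon)n$ for $\mu^{*n}\otimes\nu$-typical pairs $(h,\zeta)$. That controls $h$ composed with an \emph{independent} block. It says nothing about $\nu(h^{-j}A_h)=(h^{j})_*\nu(A_h)$ for $j\ge 1$, which involves powers of a single block. Your conditioning produces $\nu(A_h)\ge 1/2$ and no more, and nothing ties together the elements chosen at different stages.

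Moreover, the cocycle identity gives $\sum_{j=1}^{q}b_{h^{j}\eta}(ho)=b_\eta(h^qo)=0$ for every $\eta$. So once $\varepsilon<l$, that intersection is empty for every $h$ with $h^q=e$. The whole content of the lemma therefore lies in the step you left open.

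\textbf{The missing idea.} The paper uses a \emph{two}-block swap that works with distances only. It needs neither boundary points nor Karlsson--Margulis, just Kingman's theorem ($d(y,Z_ny)/n\to\ell$ in probability).

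Take independent $U_m,V_m\sim\mu^{*m}$. Since $(U_m,V_m)$ and $(V_m,U_m)$ have the same law, both $U_mV_m$ and $V_mU_m$ have law $\mu^{*2m}$. Hence along a sequence $m=m_k\to\infty$ you can fix deterministic $u,v$ with $d(y,uy),\,d(y,vy)=\ell m+o(m)$ and $d(y,uvy),\,d(y,vuy)=2\ell m+o(m)$.

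Let $w=uv$ and $q=\mathrm{ord}(w)\le B$, and consider the closed $2q$-gon with vertices $w^ry$ and $w^ruy$. Every triple of consecutive vertices is a $w^r$-translate of either $(y,uy,uvy)$ or $(uy,uvy,uvuy)=u\cdot(y,vy,vuy)$. By equivariance, the near-alignment at \emph{every} vertex reduces to the two typicality statements above. You never need to control a power of $w$ acting on anything random, which is precisely the consistency problem you could not resolve.

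Rescaling by $m^{-1}$ and passing to an ultralimit turns the polygon into a closed local geodesic of length $2q\ell>0$ in a CAT(0) space, which is impossible.
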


\begin{proof}
Let $X_1,X_2,\ldots$ be independent $G$-valued random variables with common
law $\mu$, and set $Z_n=X_1\cdots X_n$.  By Kingman's subadditive ergodic theorem, see for example 
\cite{karlsson-margulis},
\[
  \frac{1}{n}d(y,Z_ny)\longrightarrow \ell:=  l_{\mu}(G) 
\]
almost surely.  Since $\mu$ has finite support, there exists $C<\infty$ such
that $d(y,Z_ny)\leq Cn$; thus the convergence also holds in $L^1$ and in
probability.

Suppose, seeking a contradiction, that $\ell>0$.  For each $m$, let $U_m$
and $V_m$ be independent random variables, both with law $\mu^{*m}$.  Then
\[
  U_mV_m\sim\mu^{*2m},
  \qquad
  V_mU_m\sim\mu^{*2m}.
\]
The second identity does not use commutativity: the ordered pairs
$(U_m,V_m)$ and $(V_m,U_m)$ have the same distribution.  Consequently,
\begin{equation}\label{eq:four-convergence}
\begin{split}
 \frac{1}{m}d(y,U_my)&\longrightarrow\ell,\qquad
 \frac{1}{m}d(y,V_my)\longrightarrow\ell,\\
 \frac{1}{m}d(y,U_mV_my)&\longrightarrow2\ell,\qquad
 \frac{1}{m}d(y,V_mU_my)\longrightarrow2\ell
\end{split}
\end{equation}
in probability.  More explicitly, for every $\eta>0$, the probability that
all four displayed quantities differ from their respective limits by less
than $\eta$ tends to $1$; this follows from the four individual convergences
and the union bound.

Choose integers $m_k\to\infty$ such that the event on which all four
quantities in \eqref{eq:four-convergence} differ from their respective limits
by less than $1/k$ has positive probability.  We may therefore choose
deterministic elements $u_k,v_k\in G$ satisfying
\begin{equation}\label{eq:four-asymptotics}
\begin{aligned}
 d(y,u_ky)&=\ell m_k+o(m_k),&
 d(y,v_ky)&=\ell m_k+o(m_k),\\
 d(y,u_kv_ky)&=2\ell m_k+o(m_k),&
 d(y,v_ku_ky)&=2\ell m_k+o(m_k).
\end{aligned}
\end{equation}
Indeed, the normalized errors are at most $1/k$, so the corresponding
unnormalized errors are $o(m_k)$.

Put $w_k=u_kv_k$ and $q_k=\mathrm{ord}(w_k)$.  By hypothesis,
$q_k\leq B$.  Moreover, \eqref{eq:four-asymptotics} implies that $w_k\neq e$
for all sufficiently large $k$.  After passing to a subsequence and
relabeling, there is a fixed $q\in\{2,\ldots,B\}$ such that
\[
  q_k=q\qquad\text{for every }k.
\]

For $0\leq r\leq q-1$, define
\[
  p^{(k)}_{2r}=w_k^ry,
  \qquad
  p^{(k)}_{2r+1}=w_k^ru_ky,
\]
with indices understood modulo $2q$.  Since $w_k^q=e$, this is a closed
$2q$-gon.  Isometric invariance and \eqref{eq:four-asymptotics} give
\begin{align}
 d\bigl(p^{(k)}_{2r},p^{(k)}_{2r+1}\bigr)
 &=\ell m_k+o(m_k),\label{eq:edge1}\\
 d\bigl(p^{(k)}_{2r+1},p^{(k)}_{2r+2}\bigr)
 &=\ell m_k+o(m_k),\label{eq:edge2}\\
 d\bigl(p^{(k)}_{2r},p^{(k)}_{2r+2}\bigr)
 &=2\ell m_k+o(m_k),\label{eq:diag1}\\
 d\bigl(p^{(k)}_{2r+1},p^{(k)}_{2r+3}\bigr)
 &=2\ell m_k+o(m_k).\label{eq:diag2}
\end{align}
For example, the last equality follows from
\[
\begin{split}
 d\bigl(w_k^ru_ky,w_k^{r+1}u_ky\bigr)
 &=d(u_ky,w_ku_ky)\\
 &=d(y,u_k^{-1}w_ku_ky)
 =d(y,v_ku_ky).
\end{split}
\]
The formulas remain valid at the closing vertices because all indices are
taken modulo $2q$ and $w_k^q=e$.

Fix a nonprincipal ultrafilter $\omega$ and form the pointed ultralimit
\[
  (Y_\omega,\bar y)
  =\omega\text{-}\lim (Y,m_k^{-1}d,y).
\]
which again is a CAT(0) space. 
Since $q$ is fixed and all side lengths are
$O(m_k)$, each sequence $(p_i^{(k)})_k$ defines a point
$\bar p_i\in Y_\omega$.  Equations \eqref{eq:edge1}--\eqref{eq:diag2} imply
\[
  d(\bar p_i,\bar p_{i+1})=\ell>0
\]
and
\[
  d(\bar p_{i-1},\bar p_{i+1})
  =d(\bar p_{i-1},\bar p_i)+d(\bar p_i,\bar p_{i+1})
  =2\ell
\]
for every $i$ modulo $2q$.  Thus, the geodesic sides joining consecutive
vertices form a closed local geodesic of length $2q\ell>0$.  Indeed, equality
in the triangle inequality and uniqueness of geodesics imply that
\[
  [\bar p_{i-1},\bar p_i]\cup[\bar p_i,\bar p_{i+1}]
  =[\bar p_{i-1},\bar p_{i+1}]
\]
for every $i$, including the two indices at the closing junction.
Non-trivial closed geodesics do not exist in CAT(0) spaces.

This contradiction implies that
$\ell=0$.
\end{proof}

\begin{rem}
The bounded-order block-swap lemma in particular shows that for Burnside groups, any isometric action on a CAT(0) space has zero drift. Note that most free Burnside groups are nonamenable and hence, in contrast, their random walks have positive drift in the word metric.
\end{rem}


\subsection{Proof of Swenson's question}
We use the following standard convention: a \emph{CAT(0) group} is a
discrete group admitting a proper and cocompact isometric action on a proper
CAT(0) space.  The action is not assumed a priori to be faithful; properness
implies that its kernel is finite. An isometric action of $G$ on $Y$  is \emph{proper} (or
\emph{discrete}
) if, for every $y\in Y$ and every bounded subset
$A\subseteq Y$, the set
\[
  \{g\in G:gy\in A\}
\]
is finite.

\begin{thm}\label{thm:main} \emph{(Theorem \ref{cor:Swenson-intro} ) }
Every torsion subgroup of a CAT(0) group is finite.
\end{thm}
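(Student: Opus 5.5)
Let $\Gamma$ act properly and cocompactly on a proper $\cat{0}$ space $Y$, and let $H\le\Gamma$ be a torsion subgroup. The plan has four steps: reduce to bounded exponent, reduce to finite generation, obtain zero drift from Lemma \ref{lem:block-swap}, and convert zero drift into a fixed point using the geometry of $\Gamma$.

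\textbf{Reduction to bounded exponent.} Every element of $H$ is elliptic, because a torsion element has a bounded orbit. By properness, point stabilizers are finite. By cocompactness together with properness, there are only finitely many conjugacy classes of finite subgroups of $\Gamma$, as in \cite{bridson-haefliger}. Hence there is $B$ with $\mathrm{ord}(h)\le B$ for all $h\in H$.

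\textbf{Reduction to finite generation.} It suffices to show that every finitely generated subgroup $H'\le H$ is finite with order bounded uniformly in $H'$. Indeed, once $H'$ is finite it fixes a point, so $H'$ lies in a point stabilizer. Every finite subgroup of $\Gamma$ is conjugate to a subgroup of one of finitely many finite stabilizers, so $|H'|\le M$ for some $M$ independent of $H'$. An ascending union of subgroups of order at most $M$ stabilizes, so $H$ is finite. (The kernel of the action is finite and can be ignored.)

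\textbf{Zero drift.} Let $H'$ be finitely generated with generating set $S$, and let $\mu$ be the uniform measure on $S\cup S^{-1}$. Lemma \ref{lem:block-swap} gives $l_\mu(H')=0$. The space $Y$ has bounded geometry by Remark \ref{rem:boundedgeometry}. Finite groups of isometries of $\R^n$ have fixed points, but $H'$ is not yet known to be finite. Since $H'$ has bounded exponent, any isometric action of $H'$ on $\R^n$ factors through a linear group of bounded exponent. By Burnside–Schur that image is finite, so the action has a fixed point. Thus $H'$ has property $\mathrm F\R^{<\infty}$, and Proposition \ref{lem:strongly_fixed} yields almost fixed points: $\delta(\rho|_{H'})=0$.

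\textbf{From almost fixed points to a fixed point.} Choose $x_j\in Y$ with $\max_{s\in S}d(x_j,sx_j)\to0$. By cocompactness there are $g_j\in\Gamma$ with $g_jx_j\in K$ for a fixed compact $K$. Conjugating, each $s\in S$ gives $g_jsg_j^{-1}$ moving a point of $K$ by less than $1$. By properness, these conjugates range over a finite subset $F\subset\Gamma$. Pass to a subsequence along which $g_jsg_j^{-1}=f_s$ is constant for every $s\in S$. Let $y$ be a limit point of the $g_jx_j$ in the compact set $K$. Then $f_sy=y$ for all $s$, so $g_jH'g_j^{-1}$ fixes $y$ and is therefore finite. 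Hence $H'$ is finite and contained in a stabilizer, which gives the uniform bound needed above.

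\textbf{Main obstacle.} I expect the hardest step to be the passage from zero drift to almost fixed points. Proposition \ref{lem:strongly_fixed} needs the $\mathrm F\R^{<\infty}$ input, which I justify via Burnside–Schur. It also relies on Theorem \ref{thm:IK23}, whose moment hypotheses hold for a finitely supported $\mu$. The final compactness argument is standard, using properness and cocompactness as above.
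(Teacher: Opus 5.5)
Your proposal is correct and follows the paper's proof essentially step by step. You get bounded exponent from the finiteness of conjugacy classes of finite subgroups, zero drift from Lemma \ref{lem:block-swap}, and almost fixed points from Proposition \ref{lem:strongly_fixed}; the same cocompactness/properness compactness argument then gives a genuine fixed point for a conjugate of $H'$, and the uniform bound on finite subgroups handles non-finitely-generated $H$. Your one addition is an explicit Burnside--Schur check that $H'$ has the fixed-point property for every $\mathbb{R}^n$, which Proposition \ref{lem:strongly_fixed} requires and the paper's proof uses without stating.
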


\begin{proof}
    Let $\Delta$ be a $\cat{0}$ group acting properly and cocompactly by isometries
on a proper CAT(0) space $Y$. As explained in Remark \ref{rem:boundedgeometry} it is a standard fact that $Y$ then necessarily has bounded geometry.

It is a simple well-known consequence of the Cartan-Bruhat-Tits fixed-point theorem
that the number of conjugacy classes of finite subgroups in CAT(0) groups is finite, \cite[Corollary II.2.8]{bridson-haefliger}. Therefore there is an upper bound $C$ on the order of all finite subgroups. In particular, every torsion element in $\Delta$ has order at most $C$. Therefore we can find a number $Q$, an \emph{exponent}, such as $g^Q=1$ for all $g \in \Gamma$.

Take a torsion subgroup $H$ of $\Delta$ generated by a finite set $S$. Thanks to Lemma \ref{lem:block-swap} and Proposition \ref{lem:strongly_fixed} we have the existence of almost fixed points $x_n$:
$$
d(sx_n,x_n)<1/n
$$
for every $s\in S$. Take a compact set $K$ such that $\Delta K=Y$. Transport $x_n$ back to $K$ with $g_n\in\Delta$, so that $y_n:=g_nx_n\in K$. Then we have
$$
d(g_nsg_n^{-1}y_n,y_n)=d(sx_n,x_n)<1/n<1.
$$
By the properness of the action, the set $\{ g_nsg_n^{-1}: n>0\}$ is finite in $\Delta$. since $S$ is finite we can select a subsequence of $n_k$ such that $t_s:=g_{n_k}sg_{n_k}^{-1}$ are independent of $n_k$. Refine this subsequence one more time so that, thanks to the compactness of $K$, $g_{n_k}x_{n_k}$ converges to a point $y\in K$. Now we have for an index $m$ belonging to the selected subsequence that
$$
d(sg_m^{-1}y,g_m^{-1}y)=d(t_sy,y)=\lim_k d(t_sy_{n_k},y_{n_k})=0
$$
and we have the desired global fixed point for $H$. The orbit of $H$ is hence bounded and by properness $H$ must be finite.

Thus we have proved that any finitely generated torsion subgroup of $\Delta$ is finite. 

In case $H$ is not finitely generated, suppose that $H$ is infinite and take $C+1$ distinct elements in $H$. By the above argument, these generate a finite subgroup of cardinality at least $C+1$, which is a contradiction to the upper bound on finite subgroups. Hence, the subgroup $H$ is finite.
\end{proof}

\subsection {Proof of Theorem \ref{thm: torsion-intro}}
In this subsection, we prove Theorem \ref{thm: torsion-intro}, which is for finitely generated torsion groups with finite exponents.
\begin{thm} \emph{(Theorem \ref{thm: torsion-intro})}
    Let $\Gamma$ be a finitely generated group for which there exists an integer $N$ such that $g^N=e$ for all $g\in \Gamma$. Then we have
     \begin{itemize}
 \setlength{\itemsep}{2pt}
     \item[{\rm (1)}]  Whenever $\Gamma$ acts properly on a complete $\cat{0}$ space $Y$ of bounded geometry, $\Gamma$ must be a finite group.
     \item[{\rm (2)}] Whenever $\Gamma$ acts on a complete finite-dimensional $\cat{0}$ space $Y$, there is a global fixed point in $Y$.
 \end{itemize}
\end{thm}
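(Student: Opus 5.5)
Both parts start from the block-swap Lemma \ref{lem:block-swap}. Fix a finite symmetric generating set $S$ and let $\mu$ be uniform on $S$; it is finitely supported, so it has finite second moment. By Lemma \ref{lem:block-swap}, $l_\mu(\rho)=0$ for every isometric action $\rho$ of $\Gamma$ on a complete CAT(0) space. Next, $\Gamma$ has property $\mathrm F\mathbb R^{<\infty}$. Indeed, any isometric action on $\R^n$ factors through $\mathrm{Isom}(\R^n)\subset \mathrm{GL}_{n+1}(\R)$. The image is a finitely generated linear group of bounded exponent, hence finite by Burnside--Schur. A finite group acting on $\R^n$ fixes the barycenter of an orbit. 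In particular, $\Gamma$ has no nontrivial homomorphism to $\R$, since every element of $\Gamma$ has finite order.

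For part (2), apply Proposition \ref{lem:strongly_fixed_dim} with $n=\dim Y$ to get $\delta(\rho)=0$. It remains to upgrade almost fixed points to a genuine fixed point, and I would do this by induction on $\dim Y$. Suppose $\rho(\Gamma)$ has no fixed point in $Y$. Since $\delta(\rho)=0$, the almost fixed points $x_k$ with $\max_s d(x_k,sx_k)<1/k$ must be unbounded: otherwise a standard CAT(0) argument (circumcenters/convexity of displacement in finite dimension) gives a fixed point. So $x_k\to\infty$, and using finite dimension one should extract a $\Gamma$-fixed point $\xi\in\partial Y$. The cleanest route is the Caprace--Lytchak result that a filtering family of closed convex sets in a finite-dimensional space has nonempty intersection or a common boundary point, applied to the sublevel sets $\{x:\max_s d(x,sx)\le 1/k\}$. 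The intersection of these sets is the fixed point set, which we assumed empty, so they share a boundary point, and that point is $\Gamma$-fixed. Now form the ultralimit of Lemma \ref{lem:ultralimit} based along the ray to $\xi$. Since $\Gamma$ has no homomorphism to $\R$, it acts on the factor $Z_1$ with $\dim Z_1<\dim Y$. By induction, $\Gamma$ fixes a point of $Z_1$, hence a point of $Z'$ on the $\R$-line, i.e.\ a point of the ultralimit $Z$. I then need to pull this back to a point of $Y$ with small displacement at a controlled scale. This transfer step is the main obstacle, because ultralimits only preserve almost-fixedness rather than genuine fixedness. An alternative is to work with the family of displacement functions and argue directly via Caprace--Lytchak that the nested nonempty sets $C_k=\{\max_s d(x,sx)\le 1/k\}$ force a fixed point by iterating the boundary-point reduction. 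Here one uses the lower bound on $\delta$ from Lemma \ref{lem:splitting}. Assume no fixed point, and first suppose $\delta(\rho)>0$ is false. Then use convex-hull/Tits-boundary radius arguments: for bounded exponent, the fixed boundary point yields, via Lemma \ref{lem:splitting} and induction, an action of $\Gamma$ on some lower-dimensional space without fixed points in $Z_0\cup\partial Z_0$. That contradicts Theorem \ref{thm:IK23} combined with $\mathrm F\mathbb R^{<\infty}$ and zero drift, provided we keep $\delta>0$. So the real task is to rule out the case where $\delta=0$ but the action has no fixed point, which is exactly the obstacle above.

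For part (1), Proposition \ref{lem:strongly_fixed} gives almost fixed points $x_k$ with $d(sx_k,x_k)<1/k$ for $s\in S$. Without cocompactness, I would invoke the Breuillard--Green--Tao generalized Margulis lemma: there is $\varepsilon>0$, depending only on the capacity of $Y$, such that for a proper action the group generated by $\{g: d(gx,x)<\varepsilon\}$ is virtually nilpotent. For $k$ large, all of $S$ lies in this set, so $\Gamma$ is virtually nilpotent. A finitely generated virtually nilpotent group of bounded exponent is finite, since finitely generated nilpotent torsion groups are finite and the finite-index subgroup is finitely generated. Alternatively, once $\Gamma$ is known to be virtually nilpotent it is amenable, and one can conclude as in part (2).
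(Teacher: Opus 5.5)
Your part (1) follows the paper's argument: Lemma \ref{lem:block-swap}, then Proposition \ref{lem:strongly_fixed}, then Breuillard--Green--Tao. One detail is missing. The Margulis lemma applies to the discrete group $\rho(\Gamma)\le\isom{Y}$, so it only shows that $\rho(\Gamma)$ is finite. To conclude that $\Gamma$ is finite you also need $\ker\rho\subseteq\{g:\rho(g)y=y\}$, which is finite by properness. Your alternative ending via part (2) is not available, since part (1) does not assume $\dim Y<\infty$.

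Part (2) has a genuine gap, and you name it yourself: nothing in the proposal rules out an action with $\delta(\rho)=0$ and no fixed point.
\begin{itemize}
\item Applying Proposition \ref{lem:strongly_fixed_dim} to the original action points the wrong way. It only produces almost fixed points, and without geodesic completeness and bounded geometry (the hypotheses of Theorem \ref{thmD}) there is no mechanism to promote them to a fixed point.
\item The ultralimit along a ray to $\xi$ cannot help either, whether $\xi$ comes from Caprace--Lytchak or elsewhere. It only gives $\delta(\rho')\ge\delta(\rho)$, so when $\delta(\rho)=0$ a fixed point in $Z$ says nothing about $Y$.
\end{itemize}

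The missing idea is to argue by contradiction and first replace the action by one with $\delta>0$, using the rescaling trick of \cite[Proposition 3.1]{bourdon-fix}.
\begin{itemize}
\item If $\Gamma$ has no fixed point, then $D(x)=\max_{s\in S}d(x,\rho(s)x)$ is positive everywhere.
\item For each $n$ there is $p_n$ with $D\ge D(p_n)/2$ on $B(p_n,nD(p_n))$. Otherwise a sequence with $D(y_{i+1})<D(y_i)/2$ and $d(y_i,y_{i+1})<nD(y_i)$ exists; it is Cauchy and converges to a fixed point.
\item The ultralimit of $(D(p_n)^{-1}Y,p_n)$ is a complete $\cat{0}$ space of dimension at most $\dim Y$ \cite[Proposition 6]{izeki-karlsson}, and $\Gamma$ acts on it with $\delta\ge 1/2$.
\item Lemma \ref{lem:block-swap} holds for \emph{every} action of $\Gamma$, so this new action has zero drift.
\item Proposition \ref{lem:strongly_fixed_dim}, applied to the rescaled action, then gives $\delta=0$, a contradiction.
\end{itemize}

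The paper packages the same step as Proposition \ref{prop:no_fixed_point_in_bdry}: rescaling plus Lemma \ref{lem:splitting}, which is exactly where $\delta>0$ is needed. It then uses zero drift, Theorem \ref{thm:IK23}, and Schur on the invariant flat. With this step in place, your Caprace--Lytchak and induction-on-dimension detours are unnecessary.
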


We split the proof into two parts.
\subsubsection{Proper action and bounded geometry} \label{ssec:BGT}
Now we prove the first part of Theorem \ref{thm: torsion-intro} in which the isometric action is proper and the CAT(0) space is of bounded geometry. The discussion also serves as a comparison between the use of Theorem \ref{thmD}, which requires geodesic completeness but makes no proper action assumption, with that of applying a generalized Margulis lemma.

\begin{thm}[Part (1) of Theorem \ref{thm: torsion-intro}]\label{thm:proper-torsion}
    Let $G$ be a finitely generated group acting properly on a complete $\cat{0}$ space of bounded geometry. Assume that $G$ is torsion with a finite bound on the order of all elements. Then $G$ is finite.
\end{thm}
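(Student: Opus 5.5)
The strategy is to follow the proof of Theorem \ref{thm:main}, with the Breuillard--Green--Tao generalized Margulis lemma taking over the role that cocompactness played there. We first get almost fixed points, and then use the Margulis lemma to conclude that $G$ is virtually nilpotent.

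\textbf{Step 1: almost fixed points.} Fix a finite symmetric generating set $S$ and let $\mu$ be the uniform measure on $S$. By Lemma \ref{lem:block-swap}, the drift vanishes: $l_\mu(\rho)=0$. Every finite-dimensional real representation of $G$ has finite image. Indeed, a finitely generated linear group of bounded exponent is finite by Burnside--Schur, and $\mathrm{Isom}(\R^n)$ is linear. So $G$ has property $\mathrm F\mathbb R^{<\infty}$: a finite group acting on $\R^n$ fixes the barycenter of an orbit. Proposition \ref{lem:strongly_fixed} then provides points $x_n\in Y$ with $d(sx_n,x_n)<1/n$ for all $s\in S$.

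\textbf{Step 2: the Margulis lemma.} Bounded geometry gives a uniform packing bound on balls of radius, say, $4$ in $Y$. With this bound, \cite[Cor.~1.15]{breuillard-green-tao}, in the form valid for discrete isometry groups of spaces with bounded packing, supplies a constant $\varepsilon_0>0$ with the following property. For any point $x$, the subgroup generated by $\{g\in G: d(x,gx)<\varepsilon_0\}$ is virtually nilpotent. Choose $n$ with $1/n<\varepsilon_0$. Then $S$ lies in this set for $x=x_n$, so $G$ itself is virtually nilpotent.

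The main obstacle is this application of Breuillard--Green--Tao in the present setting. We must check that the action's properness (it need not be faithful, but has finite kernel) and the packing condition on $Y$ together match the hypotheses of their corollary. Concretely, the number of orbit points $gx$ in a ball of radius $R$ around $x$ must be controlled uniformly in $x$. If the orbit map is not injective, we pass to the image group modulo the finite kernel. This group is still bounded-exponent and finitely generated, and finiteness of the image implies finiteness of $G$.

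\textbf{Step 3: conclusion.} A finitely generated virtually nilpotent torsion group is finite. A finite-index nilpotent subgroup is again finitely generated. A finitely generated nilpotent group has a central series whose successive quotients are finitely generated abelian groups. Those quotients are torsion, hence finite, so the subgroup is finite, and therefore so is $G$.

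Alternatively, without invoking this algebraic fact, one can argue that a finitely generated virtually nilpotent group with nonvanishing virtual first Betti number cannot have bounded exponent. Either way, $G$ is finite, which proves the theorem.
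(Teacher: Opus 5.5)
Your proposal is correct and follows the paper's proof essentially step for step. Zero drift from the block-swap lemma, together with Proposition~\ref{lem:strongly_fixed} (using $\mathrm F\mathbb R^{<\infty}$ from Schur), gives almost fixed points; the Breuillard--Green--Tao Margulis lemma applied to the discrete image $\rho(G)$ then makes $\rho(G)$ virtually nilpotent, hence finite; and finiteness of $\ker\rho$ finishes. One clarification of your ``obstacle'' paragraph: BGT needs only the packing bound on $Y$ and discreteness of $\rho(G)\le\mathrm{Isom}(Y)$, which is immediate from properness, and you pass to $\rho(G)$ because the action need not be faithful, not because the orbit map fails to be injective.
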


We use the following generalized Margulis lemma. 

\begin{thm}[Breuillard-Green-Tao]\label{thm:BGT}
Let $K>1$, let $Y$ be a metric space in which every ball of radius $4$ can be
covered by at most $K$ balls of radius $1$, and let
$\Gamma\leq\mathrm{Isom}(Y)$ act discretely.  There exists
$\varepsilon=\varepsilon(K)>0$ such that, for every $y\in Y$, the subgroup
\[
  \Gamma_{\varepsilon}(y)
  :=\bigl\langle g\in\Gamma:d(y,gy)<\varepsilon\bigr\rangle
\]
is virtually nilpotent.
\end{thm}

This is \cite[Corollary~11.17]{breuillard-green-tao}; see also the formulation
\cite[Theorem~5.2]{ji-wu_tits_alt}.  Our definition of bounded geometry supplies the
covering hypothesis.  Indeed, put
\[
  K_Y:=\max\{2,n_{4,1}(Y)\}.
\]
A greedy construction of a maximal $1$-separated subset of any ball of
radius $4$ terminates after at most $n_{4,1}(Y)$ steps.  Maximality says that
the open radius-$1$ balls centered at the selected points cover the original
ball. Thus, $K_Y>1$ is a packing constant in the precise sense of
Theorem~\ref{thm:BGT}.

\begin{proof}[Proof of Theorem~\ref{thm:proper-torsion}]
Let $\rho:G\to\mathrm{Isom}(Y)$ denote the given action and let $S=S^{-1}$ be a
finite generating set of $G$.  As before, Lemma \ref{lem:block-swap} and Proposition \ref{lem:strongly_fixed} gives
\[
  \inf_{y\in Y}\max_{s\in S}d(y,\rho(s)y)=0.
\]
Since $Y$ has bounded geometry, the discussion following
Theorem~\ref{thm:BGT} supplies a packing constant $K_Y$, and hence a Margulis
constant $\varepsilon(K_Y)>0$.  Choose $y\in Y$ such that
\[
  \max_{s\in S}d(y,\rho(s)y)<\varepsilon(K_Y).
\]
Put $\overline G:=\rho(G)$.  The action of $\overline G$ is discrete: for a
bounded set $A\subseteq Y$, the set
\[
  \{\bar g\in\overline G:\bar g y\in A\}
\]
is the image of a subset of the finite set
$\{g\in G:\rho(g)y\in A\}$.  Moreover,
\[
  \rho(S)\subseteq
  \{\bar g\in\overline G:d(y,\bar g y)<\varepsilon(K_Y)\}.
\]
Since $\rho(S)$ generates $\overline G$, it follows that
\[
  \overline G=\overline G_{\varepsilon(K_Y)}(y).
\]
Theorem~\ref{thm:BGT} therefore implies that $\overline G$ is virtually
nilpotent.  The group $\overline G$ is finitely generated and torsion, being
a quotient of $G$. 
It is a standard fact that every finitely generated virtually nilpotent torsion group is finite.
Therefore $\overline G$ is finite.

Finally, discreteness of the original action gives
\[
  \ker\rho\subseteq\{g\in G:\rho(g)y\in\{y\}\},
\]
and the set on the right is finite.  Thus $\ker\rho$ is finite.  The exact
sequence
\[
  1\longrightarrow\ker\rho\longrightarrow G
  \longrightarrow\overline G\longrightarrow 1
\]
now shows that $G$ is finite.
\end{proof} 

\subsubsection{The finite dimensional case without proper action and bounded geometry} We complete the proof of the second part of Theorem \ref{thm: torsion-intro}, in which the isometric action is not required to be proper and the space is not required to have bounded geometry.

\begin{thm}[Part (2) of Theorem \ref{thm: torsion-intro}]\label{thm:finite-dim}
    Let $\Gamma$ be a finitely generated group acting on a complete finite-dimensional $\cat{0}$ space. Assume that $\Gamma$ is torsion with a finite bound on the order of all elements. Then there is a global fixed-point for $\Gamma$.
\end{thm}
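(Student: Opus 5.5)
Let $Y$ be the complete finite-dimensional $\cat{0}$ space, $n=\dim Y$, $\rho:\Gamma\to\isom{Y}$ the action, and $S$ a finite symmetric generating set. We argue by induction on $n$ and by contradiction, combining zero drift with the finite-dimensional rigidity of Proposition~\ref{lem:strongly_fixed_dim}.

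\textbf{Step 1: zero drift and almost fixed points.} Let $\mu$ be uniform on $S$. It is finitely supported, symmetric and generating, so it has finite second moment. Lemma~\ref{lem:block-swap} gives $l_\mu(\rho)=0$. A torsion group has no nontrivial homomorphism to $\R$, and every isometric action of $\Gamma$ on $\R^n$ has a fixed point: the image lies in the linear group $\mathrm{Isom}(\R^n)$, is finitely generated and torsion of bounded exponent, hence is finite by Burnside–Schur, so it fixes the barycenter of an orbit. Proposition~\ref{lem:strongly_fixed_dim} therefore gives $\delta(\rho)=0$.

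\textbf{Step 2: from almost fixed points to a fixed point.} Choose $x_k$ with $\max_{s\in S}d(x_k,\rho(s)x_k)<1/k$. If $(x_k)$ has a bounded subsequence, then by the triangle inequality the displacement of every fixed word $g$ tends to zero along it. Take the circumcenter $c_k$ of the bounded set $\{x_k\}_{k\ge K}$ (or a limit argument using asymptotic centers). Then $\rho(g)$ moves that set within Hausdorff distance tending to zero, so the circumcenter is fixed. More precisely, the asymptotic center of the bounded sequence $(x_k)$ with respect to $\omega$ is unique in a complete $\cat{0}$ space, and it is $\Gamma$-invariant because $d(\rho(g)x_k,x_k)\to0$. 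This gives a global fixed point.

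\textbf{Step 3: the unbounded case.} If every such sequence is unbounded, I would pass to the rescaled ultralimit. Write $D_k=d(x_k,y_0)\to\infty$ and form $\omega\text{-}\lim(Y,D_k^{-1}d,y_0)$. Here the points $x_k$ give a point $\bar x$ at distance $1$ from $\bar y_0$. Since displacements are $<1/k$ before scaling, $\bar x$ is fixed, and the limit action still has bounded orbits at $\bar y_0$. This does not immediately contradict anything.

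Instead, the main obstacle is to exploit $\partial Y$. Since the almost fixed points escape, a subsequence converges to some $\xi\in\partial Y$. I expect to show that $\xi$ is fixed by $\rho(\Gamma)$: for each $s$, $\rho(s)x_k$ stays within $1/k$ of $x_k$, so the geodesics from $y_0$ to $x_k$ and to $\rho(s)x_k$ have the same limit. This requires convergence to the boundary in a possibly non-proper space. In the finite-dimensional setting I would use the Caprace–Lytchak theory, or replace the sequence by nested closed convex sublevel sets $C_k=\{x:\max_s d(x,\rho(s)x)\le 1/k\}$. These are nonempty and $\Gamma$-invariant, although convexity of the displacement function requires the usual argument. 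By Caprace–Lytchak, a nested family of closed convex sets with empty intersection in a finite-dimensional complete $\cat{0}$ space has a nonempty intersection of boundaries, yielding a canonical $\Gamma$-fixed point $\xi\in\partial Y$ (a circumcenter in the boundary). If the intersection is nonempty, we obtain a fixed point and are done.

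With $\xi$ fixed, I would apply Lemma~\ref{lem:ultralimit} along the ray to $\xi$, taking the basepoints $c(n)$ to be chosen inside $C_{k_n}$, so that the limit action on the factor $Z_0$ has a fixed point. Because $\Gamma$ has no homomorphism to $\R$, the limit action on $Z_0$ is well defined. The remaining task is to transfer this back to $Y$; I would attempt it by induction on $\dim Z_0<\dim Y$, and this step remains the one I am least sure closes.
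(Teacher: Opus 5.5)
Your Steps 1 and 2 are correct, but Step 3 is a genuine gap, and the route you sketch for it cannot close. In the unbounded case you are trying to upgrade $\delta(\rho)=0$ to a genuine fixed point inside the original space $Y$. That is exactly the hard rigidity statement (Theorem \ref{thmD}), which the paper proves only under geodesic completeness and bounded geometry.

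The boundary detour gives you no leverage. If you base the ultralimit along the ray to $\xi$ at almost fixed points, the limit action on $Z_0$ fixes the basepoint automatically. So ``a fixed point on $Z_0$'' merely repackages the almost fixed points you already had. Ultralimits only carry information from $Y$ to the limit, never back. Induction on $\dim Z_0$ would just produce a fixed point in $Z_0$ that you already have, and nothing transfers to $Y$.

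There is also a separate error. Your sets $C_k=\{x:\max_{s\in S} d(x,\rho(s)x)\le 1/k\}$ are not $\Gamma$-invariant, since $\rho(g)C_k$ is the sublevel set for $gSg^{-1}$. To run Caprace--Lytchak you would need the filtering family indexed by all finite subsets $F\subset\Gamma$.

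The missing idea is to argue by contradiction and rescale by the \emph{displacement} rather than by $d(x_k,y_0)$. This is Bourdon's argument \cite[Proposition 3.1]{bourdon-fix}, quoted just before Proposition \ref{prop:no_fixed_point_in_bdry}.
\begin{itemize}
\item Suppose $\rho(\Gamma)$ has no fixed point. Then $\delta(x)=\max_{s\in S}d(x,\rho(s)x)>0$ for every $x$.
\item For each $k$ there is $x_k$ with $\delta\ge \delta(x_k)/2$ on $B(x_k,k\delta(x_k))$. Otherwise a sequence along which $\delta$ halves would be Cauchy and converge to a fixed point.
\item The ultralimit $\omega\text{-}\lim(\delta(x_k)^{-1}Y,x_k)$ is a complete $\cat{0}$ space of dimension at most $\dim Y$. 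It carries a well-defined limit action $\rho_\infty$ with $\delta(\rho_\infty)\ge 1/2$.
\item Your Step 1 applies verbatim to $\rho_\infty$. Lemma \ref{lem:block-swap} holds for every action, and Proposition \ref{lem:strongly_fixed_dim} holds on every finite-dimensional complete $\cat{0}$ space. So $\delta(\rho_\infty)=0$, a contradiction.
\end{itemize}
With this, Steps 2 and 3 become unnecessary. This is essentially the paper's proof. The paper feeds the same rescaled $\delta>0$ action into Lemma \ref{lem:splitting}, uses zero drift and Theorem \ref{thm:IK23}, and concludes with Schur and Cartan, rather than going through Proposition \ref{lem:strongly_fixed_dim}.
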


Note that if we have a fixed-point free action of a finitely generated
group $\Gamma$ on a finite-dimensional complete $\cat{0}$ space, then we
can always find a finite-dimensional complete $\cat{0}$ space $Y$ and a
homomorphism $\rho\colon \Gamma \rightarrow \isom{Y}$ satisfying
\begin{equation*}
 \inf_{q \in Y} \max_{s \in S} d(q, \rho(s) q) >0
\end{equation*}
by the elegant argument in \cite[Proposition 3.1]{bourdon-fix}, see also \cite{izeki-karlsson}. 
In particular, $\rho(\Gamma)$ has no fixed points in $Y$. 
Together with Lemma~\ref{lem:splitting}, we get the following. 

\begin{prop}
 \label{prop:no_fixed_point_in_bdry}
Let $\Gamma$ be a finitely generated group that admits no nontrivial
 homomorphism into $\R$. Suppose that $\Gamma$ acts on 
 a finite-dimensional complete $\cat{0}$ space $Y$ via
 a homomorphism $\rho:\Gamma \rightarrow \isom{Y}$, and that $\rho(\Gamma)$ has no fixed point in $Y$. 
 Then there exists a complete $\cat{0}$ space $Z$, with 
 $\dim Z \leq \dim Y$, on which
 $\Gamma$ acts without fixed points in $Z \cup \partial Z$. 
\end{prop}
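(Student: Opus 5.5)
The plan is to upgrade the hypothesis "no fixed point in $Y$" to a uniform lower bound on displacement, and then apply Lemma \ref{lem:splitting} to get rid of boundary fixed points.

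First, replace the given action by one with \emph{no almost fixed points}. If $\delta(\rho)>0$ for a finite generating set $S$, keep $Y$ and $\rho$. Otherwise we have $\delta(\rho)=0$ while $\rho(\Gamma)$ has no fixed point in $Y$, and we use the rescaling argument of \cite[Proposition 3.1]{bourdon-fix}. Choose a point $x$ and set
\[
D(x)=\max_{s\in S} d(x,\rho(s)x).
\]
Absence of a fixed point means the orbit maps cannot be pulled to a fixed point, so there are points $x_n$ with $D(x_n)\to 0$ such that every point within distance comparable to $n\,D(x_n)$ of $x_n$ still has displacement at least roughly $D(x_n)/2$. This is obtained by a standard "local minimum of $D$ up to scale" selection, iterating toward a fixed point and using completeness to show the iteration cannot converge.

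Next, rescale the metric by $D(x_n)^{-1}$ and take the ultralimit $\omega\text{-}\lim (Y, D(x_n)^{-1}d, x_n)$. This is a complete $\cat{0}$ space of dimension at most $\dim Y$ by \cite[Proposition 6]{izeki-karlsson}. The induced $\Gamma$-action is well defined because generators move $x_n$ by exactly $1$ after rescaling. By the choice of $x_n$, every point of the limit has generator displacement at least $1/2$, so the new action satisfies $\delta>0$. I expect this selection of $x_n$ to be the main obstacle; since the paper cites \cite{bourdon-fix} for exactly this step, I would import it rather than reprove it.

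Finally, with a finite-dimensional complete $\cat{0}$ space $Y'$ satisfying $\dim Y'\le\dim Y$ and an action with $\delta>0$, there are two cases. If the action fixes no point of $\partial Y'$, take $Z=Y'$; it has no fixed points in $Y'$ since $\delta>0$. If it fixes a point of $\partial Y'$, then $\Gamma$ has no nontrivial homomorphism to $\R$ by hypothesis, so Lemma \ref{lem:splitting} applies. It produces $Z_0$ with $\dim Z_0<\dim Y'$ on which $\Gamma$ fixes no point of $Z_0\cup\partial Z_0$; take $Z=Z_0$.
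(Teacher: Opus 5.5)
Your proposal is correct and takes essentially the same route as the paper. The paper also first invokes Bourdon's rescaling argument \cite[Proposition 3.1]{bourdon-fix} to replace the fixed-point-free action by one on a complete $\cat{0}$ space of no larger dimension with $\inf_{q}\max_{s\in S} d(q,\rho(s)q)>0$, and then applies Lemma~\ref{lem:splitting}; your explicit case split on whether a boundary point is fixed is exactly what that last step requires.
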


\begin{proof}[Proof of Theorem~\ref{thm:finite-dim}]
Assume that $Y$ is a finite dimensional CAT(0) space and that $\Gamma$ has no fixed point in $Y$. Proposition
\ref{prop:no_fixed_point_in_bdry} then provides another finite dimensional $\cat{0}$ space $Z$ with no fixed point in $Z \cup\partial Z$. Take a symmetric probability measure supported on a generating set. Lemma \ref{lem:block-swap} shows that the drift is zero. Theorem \ref{thm:IK23} now gives a fixed point in $\partial Z$ or an invariant flat. In the latter case, the theorems of Schur and Cartan then imply a fixed point in $Z$. This is a contradiction, and hence $\Gamma$ must fix a point in $Y$.
\end{proof}


\section{An alternative for amenable groups acting on $\cat{0}$ spaces}
\label{sec:an-altern-amen}
In the case of amenable groups, we have the following result.

 \begin{thm} \emph{(Theorem \ref{thm:amenable-intro}, main part)} \label{thm:dimensionalternative}
  Let $\Gamma$ be a finitely generated amenable group and $n$ a positive
  integer. Then either
 \vspace{-0.1cm}
 \begin{itemize}
 \setlength{\itemsep}{2pt}
  \item[{\rm (1)}]  every isometric action of $\Gamma$ on an $n$-dimensional complete
  $\cat{0}$ space has a global fixed point, or
  \item[{\rm (2)}] $\Gamma$ admits an isometric action on $\R^n$ without a global fixed point. 
 \end{itemize}
 \end{thm}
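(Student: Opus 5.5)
We argue by contraposition: assume some isometric action of $\Gamma$ on an $n$-dimensional complete $\cat{0}$ space $Y$ has no global fixed point, and produce a fixed-point-free action on $\R^n$. The plan has three steps: first get an action without almost fixed points, then pass to one with no fixed point at infinity while keeping the dimension at most $n$, and finally use amenability to force an invariant flat.

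\textbf{Step 1: removing almost fixed points.} By the argument of \cite[Proposition 3.1]{bourdon-fix} recalled before Proposition \ref{prop:no_fixed_point_in_bdry}, we may replace $Y$ by another complete $\cat{0}$ space of dimension at most $n$. This is an ultralimit of rescaled copies of $Y$, so the dimension bound survives by \cite[Proposition 6]{izeki-karlsson}. On the new space, $\delta(\rho)>0$ for a fixed finite generating set $S$.

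\textbf{Step 2: removing fixed points at infinity.} Now split according to whether $\Gamma$ has a nontrivial homomorphism to $\R$.
\begin{itemize}
\item If $\Gamma$ has a nontrivial homomorphism $\chi:\Gamma\to\R$, then $\gamma\mapsto$ translation by $\chi(\gamma)$ is a fixed-point-free action on $\R\subset\R^n$. Extend it trivially on the other coordinates, and (2) holds.
\item Otherwise, Lemma \ref{lem:splitting} applies, as packaged in Proposition \ref{prop:no_fixed_point_in_bdry}. It gives a complete $\cat{0}$ space $Z$ with $\dim Z\le n$ on which $\Gamma$ fixes no point of $Z\cup\partial Z$.
\end{itemize}

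\textbf{Step 3: an invariant flat.} Here amenability enters through the Adams--Ballmann / Caprace--Lytchak theorem \cite{AB98, caprace-lytchak}: an amenable group acting on a finite-dimensional complete $\cat{0}$ space either fixes a point of $\partial Z$ or stabilizes a flat. The first alternative is excluded by Step 2. Hence $\Gamma$ preserves a flat $F\cong\R^k$ with $k\le \dim Z\le n$. If $\Gamma$ fixed a point of $F$, it would fix a point of $Z$, contradicting the construction. So $\Gamma$ acts on $\R^k$ without a fixed point, and extending trivially to $\R^n=\R^k\times\R^{n-k}$ gives (2).

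Alternatively, one could avoid Caprace--Lytchak in Step 3. Amenability gives zero drift for a suitable symmetric measure only via a harder route, so the cleaner plan is to invoke the amenable-group theorem directly.

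\textbf{The exclusive "or".} The two alternatives cannot both hold, since $\R^n$ is itself an $n$-dimensional complete $\cat{0}$ space.

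\textbf{Expected obstacle.} The main point to check is Step 3, namely that Caprace--Lytchak applies to $Z$. That theorem needs finite (telescopic) dimension and completeness, which $Z$ has. But it is stated for amenable groups acting on complete finite-dimensional spaces, and when $Z$ is not proper it may only yield a fixed point in $\partial Z$ or an invariant flat in the appropriate generalized sense. One also has to verify that the ultralimit constructions in Steps 1 and 2 do not increase the dimension, which the cited results guarantee.
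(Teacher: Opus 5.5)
Your proposal is correct and follows the paper's proof essentially step for step: you split on whether $\Gamma$ has a nontrivial homomorphism to $\R$; otherwise you use Proposition~\ref{prop:no_fixed_point_in_bdry} to get $Z$ with $\dim Z\le n$ and no fixed point in $Z\cup\partial Z$, and then apply Adams--Ballmann/Caprace--Lytchak to obtain an invariant flat $\R^k$, $1\le k\le n$, on which $\Gamma$ has no fixed point. Your separate Step~1 (Bourdon's rescaling) is already built into Proposition~\ref{prop:no_fixed_point_in_bdry}, so it is redundant but harmless, and your remarks on extending the action trivially to $\R^n$ and on the exclusivity of the two alternatives spell out what the paper leaves implicit.
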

 \begin{proof}
Suppose $(1)$ does not hold. That is, there exists an $n$-dimensional complete
  $\cat{0}$ space $Y$ such that an isometric action of $\Gamma$ on $Y$ does not have any fixed points. 
  
If $\Gamma$ admits a nontrivial homomorphism into
$\R$, then $\Gamma$ acts on $\R$ without a global fixed
point. This clearly implies $(2)$. 

If $\Gamma$ admits no nontrivial homomorphism into
$\R$, it follows from Proposition \ref{prop:no_fixed_point_in_bdry} that there exists a complete CAT(0) space $Z$ with $\dim Z \leq n=\dim Y$, on which
 $\Gamma$ acts without fixed points in $Z \cup \partial Z$. Since $\Gamma$ is amenable, according to \cite[Main Theorem]{AB98} (or \cite[Theorem 1.7]{caprace-lytchak}), the action of
$\rho(\Gamma)$ on $Z$ must leave a nontrivial flat subspace invariant, and this action has no fixed point in the flat. This also implies $(2)$. 
 \end{proof}

Farb \cite{farb} and Bridson \cite{bridson} investigated the largest dimension $\mathrm{FixDim}_{\cat{0}}(\Gamma)$ of complete CAT(0) spaces for which $\Gamma$-actions always have a global fixed point. Similarly, define $\mathrm{FixDim}_{\mathrm{Eucl}}(\Gamma )$ to be the largest integer $n$ for which $\Gamma $ has the fixed-point property for $\R^n$.
If no fixed-point-free action exists, we define this dimension to be infinite. For example, $\mathrm{FixDim}_{\cat{0}}(G)=\infty$ for any finite group $G$. 

Now we are ready to prove the second part of Theorem \ref{thm:amenable-intro}.

\begin{cor}[Theorem \ref{thm:amenable-intro}, equal dimension]
For any finitely generated amenable group $\Gamma$,
 $$
\mathrm{FixDim}_{\cat{0}}(\Gamma)=\mathrm{FixDim}_{\mathrm{Eucl}}(\Gamma ).
$$   
\end{cor}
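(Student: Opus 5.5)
Two inequalities are needed. For $\mathrm{FixDim}_{\cat{0}}(\Gamma)\le \mathrm{FixDim}_{\mathrm{Eucl}}(\Gamma)$, note that $\R^n$ is itself an $n$-dimensional complete $\cat{0}$ space. Hence, if every action of $\Gamma$ on every complete $\cat{0}$ space of dimension $\le n$ has a global fixed point, then so does every action on $\R^n$. Any $n$ witnessing the fixed-point property on the $\cat{0}$ side therefore witnesses it on the Euclidean side. This gives the inequality, including the case where $\mathrm{FixDim}_{\cat{0}}(\Gamma)=\infty$. A minor bookkeeping point: spaces of dimension exactly $n$ versus dimension at most $n$. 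Since $\R^m$ embeds as a factor $\R^m\times\R^{n-m}$ with trivial action on the second factor, and fixed points in a product project to fixed points, this causes no issue; likewise $\mathrm F\R^n$ implies $\mathrm F\R^m$ for $m<n$.

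For the reverse inequality, I would use Theorem \ref{thm:dimensionalternative} directly. Suppose $\Gamma$ has property $\mathrm F\R^n$. Let $Y$ be any complete $\cat{0}$ space of dimension $m\le n$ with an isometric $\Gamma$-action. Apply Theorem \ref{thm:dimensionalternative} with the integer $m$. Alternative (2) would give a fixed-point-free action on $\R^m$, which would contradict $\mathrm F\R^m$; the latter follows from $\mathrm F\R^n$ by the remark above (act on $\R^m\times\R^{n-m}$). So alternative (1) holds in dimension $m$, and the action on $Y$ fixes a point. Hence $\Gamma$ has the fixed-point property for all complete $\cat{0}$ spaces of dimension $\le n$, and $\mathrm{FixDim}_{\cat{0}}(\Gamma)\ge n$. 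Taking $n=\mathrm{FixDim}_{\mathrm{Eucl}}(\Gamma)$, or all $n$ if that quantity is infinite, yields equality.

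The only point requiring care is the treatment of dimension-$0$ spaces and the convention for ``largest $n$'' when no fixed-point-free action exists. I would fix the convention that $\mathrm{FixDim}$ is the supremum of $n$ such that every action in dimension $\le n$ fixes a point. With that convention the argument above is purely formal. There is no real obstacle; the substance is entirely contained in Theorem \ref{thm:dimensionalternative}.
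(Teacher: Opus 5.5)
Your proposal is correct and follows the paper's proof. The inequality $\mathrm{FixDim}_{\cat{0}}(\Gamma)\le\mathrm{FixDim}_{\mathrm{Eucl}}(\Gamma)$ holds because Euclidean spaces are complete $\cat{0}$ spaces, and the reverse inequality comes from Theorem \ref{thm:dimensionalternative}; the paper runs that second step as a contrapositive in dimension $n+1$, whereas you argue directly from property $\mathrm F\R^n$ and are more explicit about dimension exactly $m$ versus at most $n$.
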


\begin{proof}
The inequality, $\mathrm{FixDim}_{\cat{0}}(\Gamma) \leq \mathrm{FixDim}_{\mathrm{Eucl}}(\Gamma )$, is trivial since Euclidean spaces are special cases of complete CAT(0) spaces.

For the other direction, if $\mathrm{FixDim}_{\cat{0}}(\Gamma)=\infty$, then we are done by the above trivial inequality. Now we suppose that $\mathrm{FixDim}_{\cat{0}}(\Gamma)=n<\infty$. Then there exists a fixed-point free action by $\Gamma$ on an $(n+1)$-dimensional complete CAT(0) space. This means that the first alternative in Theorem \ref{thm:dimensionalternative} is ruled out for the dimension $n+1$ and thus by the second alternative, $\mathrm{FixDim}_{\mathrm{Eucl}}(\Gamma ) \leq n = \mathrm{FixDim}_{\cat{0}}(\Gamma)$. 
\end{proof}

Breuillard pointed out to us that for amenable groups, $\mathrm{FixDim}_{\mathrm{Eucl}}(\Gamma )<\infty $ has an algebraic characterization.
Let
$$
vb_1(\Gamma) = \sup_{[\Gamma :H]<\infty}\dim_{\mathbb{Q}} H^1(H;\mathbb{Q})
$$
be the virtual first Betti number.
\begin{prop}
    Let $\Gamma$ be a finitely generated amenable group. It admits an isometric action on a finite-dimensional Euclidean space without a global fixed point if and only if its virtual first Betti number is strictly positive.
\end{prop}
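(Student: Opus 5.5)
Both implications can be handled using two standard facts: the structure of isometries of $\R^n$, and the Tits alternative for linear groups combined with amenability.

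\emph{($\Leftarrow$)} Suppose $vb_1(\Gamma)>0$. Then there is a finite index subgroup $H\le\Gamma$ with $H^1(H;\mathbb{Q})\neq 0$. Since $H$ is finitely generated, this gives a surjection $H\to\Z$, and hence a nontrivial homomorphism $\phi\colon H\to\R$ with $H$ acting on $\R$ by translations $t\mapsto t+\phi(h)$ and no fixed point. Induce this action to $\Gamma$. Let $\Gamma$ act on the finite-dimensional Euclidean space of $H$-equivariant maps $\Gamma\to\R$, namely maps $f$ with $f(h\gamma)=f(\gamma)+\phi(h)$, via right translation. This is an affine isometric action on a space identified with $\R^{[\Gamma:H]}$ through a choice of coset representatives, with the linear part a permutation action. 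Restricted to $H$, its orbits are unbounded, because the coordinate at the identity coset changes by $\phi(h)$ and $\phi$ is unbounded. An action with a fixed point has bounded orbits, so there is no global fixed point.

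\emph{($\Rightarrow$)} Suppose $\Gamma$ acts on $\R^n$ via $\rho\colon\Gamma\to\isom{\R^n}=\R^n\rtimes O(n)$ with no fixed point. The key steps are as follows.

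\begin{enumerate}
\item Linear image. The image $\rho(\Gamma)$ is a finitely generated linear group, since $\isom{\R^n}\subset GL_{n+1}(\R)$, and it is amenable as a quotient of $\Gamma$. By the Tits alternative it is virtually solvable. Replacing $\Gamma$ by a finite index subgroup is harmless, because a finite index subgroup of a fixed-point-free action still has unbounded orbits and $vb_1$ only sees finite index subgroups.

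\item Compact closure of the rotational part. Consider the closure $\overline{L}$ of the linear part $L(\rho(\Gamma))\subset O(n)$. It is a compact Lie group whose identity component $\overline{L}^0$ is virtually solvable, since $\rho(\Gamma)$ is. A connected compact solvable Lie group is a torus, so after passing to a finite index subgroup we may assume $\overline{L}$ is a torus $T$.

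\item Reduction to an invariant affine subspace. By Bieberbach-type or Auslander arguments for virtually solvable subgroups of $\isom{\R^n}$ with unbounded orbits, there is a $\rho(\Gamma)$-invariant affine subspace on which the action has a nontrivial translational part. More concretely: decompose $\R^n=V^T\oplus V'$ into $T$-fixed vectors and their complement.

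\item The case $V^T=0$. Here the linear part has no fixed vectors. Since $T$ is abelian compact, $V'$ splits into $2$-planes on which $T$ acts by rotations. One then shows that a finitely generated group whose linear part is dense in such a torus and fixes no vector has bounded orbits; this is the main obstacle. I would try averaging the cocycle over $T$ via the closure, or argue that the abelianized cocycle lies in $H^1(\Gamma,V')=0$ when $V'$ has no invariants and the relevant image is (virtually) abelian. This yields a fixed point, a contradiction.

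\item The case $V^T\neq 0$. Projecting the cocycle onto $V^T$ gives a homomorphism $\Gamma\to V^T\cong\R^k$. It is nonzero, since otherwise step 4 applied to the complement gives a fixed point. Its image is a finitely generated nonzero subgroup of $\R^k$, hence surjects onto $\Z$, so $b_1>0$ for this finite index subgroup and $vb_1(\Gamma)>0$.
\end{enumerate}

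Justifying the vanishing of $H^1$ with coefficients in a representation without invariants for the virtually abelian situation in step 4 is the delicate point.
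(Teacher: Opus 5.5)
Your ($\Leftarrow$) direction is correct and is exactly the paper's induced-action argument. The ($\Rightarrow$) direction has a genuine gap, and it is more than delicate: the assertion in your step 4 is false.

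Take $\theta$ with $\theta/\pi\notin\mathbb{Q}$, and let $\Gamma=\langle a,b\rangle$ act on $\R^2=\mathbb{C}$ by $a(z)=e^{i\theta}z$ and $b(z)=z+1$.
\begin{itemize}
\item $\Gamma\subset\R^2\rtimes SO(2)$ is solvable, hence amenable.
\item The closure of its linear part is the torus $SO(2)$, which fixes no nonzero vector, so $V^T=0$.
\item Yet the orbit of $0$ contains $b^m(0)=m$ for every $m\in\Z$, so there is no fixed point.
\item Passing to a finite-index subgroup changes nothing. Such a subgroup contains some $b^m$ with $m\neq 0$, and its linear part is still dense in $SO(2)$.
\item The translation cocycle $c(a)=0$, $c(b)=1$ is not a coboundary, because a coboundary $v-L(g)v$ vanishes at $g=b$ where $L(b)=1$. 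So $H^1(\Gamma,V')\neq 0$ even though $V'$ has no invariant vectors.
\end{itemize}
Hence neither averaging over $T$ nor an $H^1$-vanishing argument can rescue step 4. In this example the positive Betti number comes from the \emph{linear} part, via $\Gamma\to L(\Gamma)\cong\Z$. Your step 5 only projects the cocycle to $V^T$, so it never detects this homomorphism. Step 3's appeal to ``Bieberbach-type or Auslander arguments'' is also unsubstantiated, but it becomes unnecessary once step 4 is replaced.

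The missing idea is that the $\Z$-quotient can come from either the rotational part or the translational part. Your step 2 lets you repair the argument by splitting into two cases:
\begin{itemize}
\item If $L(\rho(\Gamma))$ is infinite, then after step 2 it is an infinite finitely generated abelian subgroup of $T$, and so surjects onto $\Z$.
\item If $L(\rho(\Gamma))$ is finite, then $\rho(\Gamma)\cap\ker L$ has finite index and acts by translations. It is therefore a finitely generated subgroup $\Z^m$ of $\R^n$, with $m\geq 1$ because a finite group of isometries fixes the barycenter of an orbit.
\end{itemize}

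The paper's proof is more direct and uses no geometry beyond your step 1. First, $\rho(\Gamma)$ is infinite, since finite groups fix a point, and it is virtually solvable by Tits. Then an infinite finitely generated solvable group $G$ either has infinite abelianization, or $[G,G]$ has finite index and is finitely generated, infinite, and of smaller derived length. Induction on derived length gives a finite-index subgroup with positive first Betti number, which pulls back to a finite-index subgroup of $\Gamma$.
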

\begin{proof}
    Suppose first that the virtual first Betti number is nonzero. Then there is a finite index subgroup $H$ of $\Gamma$ with a nontrivial homomorphism $\chi :H\rightarrow \mathbb{R}$. This gives a translation action $\rho$ by $H$ without a fixed point on $\mathbb{R}$. This action can be induced to $\Gamma$ on the affine space $\{ f:\Gamma \rightarrow \mathbb{R} : f(hg)=\rho(h)f(g)=f(g)+\chi(h)\}$, which is isomorphic to $\mathbb{R}^{[\Gamma:H]}$.
    If $f$ were fixed by $H$, then $f(h)=f(e)$ for all $h$, but this contradicts $f(h)=f(e)+\chi(h)$ since $\chi$ is nontrivial.
    Thus this provides a fixed-point free action.

For the other direction, assume that $\Gamma$ has a fixed-point-free action, and consider the image $G$ of $\Gamma$ in the isometry group of $\mathbb{R}^n$. It follows that $G$ is finitely generated, amenable, and hence virtually solvable by the Tits alternative theorem since the isometry group is linear. The group $G$ must also be infinite, otherwise there would be a global fixed point. Passing to a finite index subgroup, we may assume that $G$ is solvable. 

In case $G$ is abelian, then by virtue of being infinite and finitely generated, it must have a $\mathbb{Z}$ factor. Now by induction on the derived length, either $G$ has infinite abelianization $G/[G,G]$ in which case we are done, or otherwise we pass to the finite index subgroup $[G,G]$, which hence is again finitely generated, infinite and solvable, but with lower derived length. This completes the induction. The finite index subgroup obtained with nontrivial first Betti number can now be lifted to a finite index subgroup of $\Gamma$ with nontrivial first Betti number.  
\end{proof}

Schur proved, in response to a famous problem raised by Burnside, that finitely generated linear torsion groups must be finite, which implies that any isometric action of such a group on $\R^n$, any finite $n$, has a fixed point, since the isometry groups of Eculidean spaces are linear.

\begin{cor} \label{cor:C}
Finitely generated, amenable, torsion groups cannot act on 
complete finite-dimensional
$\cat{0}$ space, without a global fixed point.
\end{cor}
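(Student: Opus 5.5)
The plan is to deduce Corollary \ref{cor:C} directly from Theorem \ref{thm:dimensionalternative}, combined with Schur's theorem as recalled just before the statement.

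Let $\Gamma$ be a finitely generated amenable torsion group. Suppose it acts isometrically on a complete $\cat{0}$ space $Y$ of finite dimension $n$, and suppose for contradiction that there is no global fixed point.

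\textbf{Step 1.} Alternative (1) of Theorem \ref{thm:dimensionalternative} fails in dimension $n$. If $\dim Y<n$ one may first replace $n$ by $\dim Y$; this costs nothing, since the theorem is stated for each positive integer. If $n=0$, then $Y$ is a point, which is trivially fixed, so we may assume $n\geq 1$. By alternative (2), $\Gamma$ then admits an isometric action on $\R^n$ with no global fixed point.

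\textbf{Step 2.} Let $G$ be the image of $\Gamma$ in $\isom{\R^n}=\R^n\rtimes O(n)$. This group embeds in $\mathrm{GL}_{n+1}(\R)$ via the usual affine embedding, so $G$ is a finitely generated linear group. It is also torsion, being a quotient of a torsion group. By Schur's theorem $G$ is finite. The barycenter of any $G$-orbit in $\R^n$ is then a $G$-fixed point, which contradicts the choice of the action in Step 1. Hence the original action on $Y$ has a global fixed point.

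The only point needing care is the application of Theorem \ref{thm:dimensionalternative}. That theorem is stated for $n$-dimensional spaces, which is fine once we set $n=\dim Y$. It internally uses Proposition \ref{prop:no_fixed_point_in_bdry}, and this requires that $\Gamma$ has no nontrivial homomorphism to $\R$. For torsion groups this holds automatically, and the theorem's proof handles that case split in any event. Apart from this bookkeeping I expect no real obstacle: the substantive input, namely the Adams--Ballmann/Caprace--Lytchak flat together with the ultralimit reduction, is already packaged in Theorem \ref{thm:dimensionalternative}.

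Note that no bounded exponent is needed here. Amenability replaces the block-swap drift argument.
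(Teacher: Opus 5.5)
Your proof is correct and is essentially the paper's own argument: the paper obtains the corollary by combining the amenable alternative (Theorem~\ref{thm:dimensionalternative}) with Schur's theorem, which says that finitely generated linear torsion groups are finite, so no fixed-point-free action on $\R^n$ can exist. Your extra bookkeeping only makes explicit what the paper leaves implicit. This covers setting $n=\dim Y$, disposing of the case $n=0$, and the barycenter argument for the finite image in $\mathrm{Isom}(\R^n)$.
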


It is remarkable that infinite such groups exist. Indeed, infinite, finitely generated torsion groups were first discovered by Golod in the 1960s. The first amenable such examples are the Grigorchuk groups, see \cite{nekrashevych} for further examples and references. 

This corollary answers, in the case of amenable groups, an old and well-known question in geometric group theory, see \cite{norin-osajda-przytycki, haettel-osajda} for the history of this problem, see also subsection \ref{ssec:mainresults} and section \ref{sec:torsion}. 

Since linear groups are residually finite, in view of the Tits alternative theorem, we also have the following corollary (see \cite{izeki-karlsson} for details):

\begin{cor} \label{cor:D}
Finitely generated, amenable, virtually simple groups and finitely generated, amenable, just infinite, nonlinear groups cannot act on 
 a complete 
 $\cat{0}$ space of finite dimension, without a global fixed point.
\end{cor}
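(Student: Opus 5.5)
Corollary \ref{cor:D} will follow from Theorem \ref{thm:dimensionalternative} (equivalently Theorem \ref{thm:amenable-intro}). Suppose, for contradiction, that a finitely generated amenable group $\Gamma$ of one of the two types acts on a complete finite-dimensional $\cat{0}$ space, say of dimension $n$, without a global fixed point. Then alternative (1) of Theorem \ref{thm:dimensionalternative} fails for this $n$. So alternative (2) holds: there is an isometric action $\rho:\Gamma\to\isom{\R^n}$ without a global fixed point.

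Let $G=\rho(\Gamma)$. Since $\isom{\R^n}=\R^n\rtimes O(n)$ embeds in $\mathrm{GL}_{n+1}(\R)$, $G$ is a finitely generated linear group. It is infinite, because a finite group of isometries of $\R^n$ fixes the barycenter of an orbit. By Malcev's theorem $G$ is residually finite, and since $G$ is amenable, the Tits alternative makes it virtually solvable. The remaining task is to show that neither type of $\Gamma$ can have such an infinite linear quotient.

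If $\Gamma$ is just infinite, every nontrivial normal subgroup has finite index. The quotient $G$ is infinite, so $\ker\rho$ has infinite index and hence is trivial. Then $\Gamma\cong G$ is linear, contradicting nonlinearity.

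If $\Gamma$ is virtually simple, let $H\le\Gamma$ be a simple subgroup of finite index; we may assume $H$ is infinite, since otherwise $\Gamma$ is finite and fixes a point. Then $\rho(H)$ has finite index in $G$, so it is infinite, and $\rho|_H$ is nontrivial. Since $H$ is simple, $\rho|_H$ is injective, so $H$ embeds in $G$. This makes $H$ residually finite. An infinite simple group has no proper finite-index normal subgroups, so it is not residually finite, a contradiction. (Alternatively, $H$ would be an infinite simple virtually solvable group, which is impossible.)

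The main point needing care is this case analysis. In the virtually simple case one must check that $H$ can be taken infinite and simple and that its image is nontrivial. In the just infinite case one must use correctly that an infinite quotient forces the kernel to be trivial. Everything else is a direct appeal to Theorem \ref{thm:dimensionalternative}, the Tits alternative and Malcev's theorem.
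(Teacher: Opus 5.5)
Your proposal is correct and follows essentially the paper's route. You apply Theorem \ref{thm:dimensionalternative} to get a fixed-point-free action on $\R^n$, whose image is an infinite finitely generated linear group, and then rule out both classes using residual finiteness of linear groups and, optionally, the Tits alternative. Your case analysis supplies the details that the paper delegates to \cite{izeki-karlsson}.
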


Examples of such groups can be found within the important class of branch groups \cite{bartholdigrigorchuksunik}, in particular in the work of Grigorchuk. 
Juschenko-Monod \cite{JM13} proved the existence of finitely generated simple amenable groups, confirming a conjecture of Grigorchuk-Medynets.
Further examples appear in the work of Matte Bon and Nekrashevych,
see \cite{nekrashevych}. Corollary \ref{cor:D} significantly extends a result in \cite{izeki-karlsson}.

\section{Scaling ultralimits}
\label{sec:bound-geom-scal}

Let $Y_n$ be a sequence of complete $\cat{0}$ spaces and $p_n \in Y_n$. Recall that for any sequence $\{r_n\}\subset \mathbb{R}$ with $r_n \to \infty$, and any non-principal ultrafilter $\omega$ on $\mathbb{N}$, the ultralimit $$(Y_\infty,p_\infty)=\omega\text{-}\lim (r_nY_n,p_n)$$ is  a complete $\cat{0}$ space. In what follows, we will deal with sequences of $\cat{0}$ spaces with uniformly bounded geometry; we say a family of complete $\cat{0}$ spaces has {\it uniformly bounded geometry} if, for all $r>\varepsilon>0$ and $Y \in \mathcal{Y}$, the $(r,\varepsilon)$-capacity $n_{r,\varepsilon}(Y)$ is uniformly bounded independent of $Y \in \mathcal{Y}$.  We denote the supremum of $n_{r,\varepsilon}(Y)$ by $n_{r,\varepsilon}(\mathcal{Y})$: $$n_{r,\varepsilon}(\mathcal{Y})= \sup_{Y\in \mathcal{Y}} n_{r,\varepsilon}(Y).$$  The following proposition is fundamental and essential in the proofs of  Theorems  \ref{thm:visibility_fixed_pt}, \ref{thm:F}, and \ref{thm:Bboundedgeom}.
\begin{prop}
\label{prop:tits_boundary}
Let $\mathcal{Y}$ be a family of geodesically complete $\cat{0}$ spaces with uniformly bounded geometry. Given $\{Y_n\} \subset \mathcal{Y}$, $p_n \in Y_n$, $\{r_n\}\subset \mathbb{R}$ with $r_n \to \infty$, consider the ultralimit $(Y_\infty,p_\infty)=\omega\text{-}\lim (r_nY_n,p_n)$. Then the following holds:
\begin{itemize}
 \setlength{\itemsep}{2pt} 
 \item[{\rm (1)}] For any $r>\varepsilon>0$,
 $$n_{r,\varepsilon}(\mathcal{Y})\geq n_{r,\varepsilon}(Y_\infty).$$
 In particular, $Y_\infty$ also has bounded geometry.
 \item[{\rm (2)}] The Tits boundary $\partial_T Y_{\infty}$ of $Y_\infty$ is compact.
\end{itemize}
\end{prop}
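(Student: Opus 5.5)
We would prove both parts from a single monotonicity property of capacities in geodesically complete $\cat{0}$ spaces. \emph{Claim:} if $Y$ is a geodesically complete $\cat{0}$ space and $\lambda\geq 1$, then $n_{r/\lambda,\varepsilon/\lambda}(Y)\leq n_{r,\varepsilon}(Y)$. In words, small scales are controlled by large scales.

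To prove the claim, fix $p$ and, for $x\in B(p,r/\lambda)$, let $\varphi(x)$ be the point at distance $\lambda d(p,x)$ from $p$ on an extension of the geodesic $[p,x]$. Geodesic completeness is used exactly here. Convexity of the distance function along the two geodesics issuing from $p$ gives $d(x,x')\leq \lambda^{-1}d(\varphi(x),\varphi(x'))$. Hence $\varphi$ sends an $(\varepsilon/\lambda)$-sparse subset of $B(p,r/\lambda)$ injectively onto an $\varepsilon$-sparse subset of $B(p,r)$.

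\emph{Part (1).} Let $\{\bar x_1,\dots,\bar x_N\}$ be an $\varepsilon$-sparse subset of $B(q,r)\subset Y_\infty$. Since the set is finite and the balls are open, there are $\lambda>1$ and $\eta>0$ with
\[
\lambda d(q,\bar x_i)<r \quad\text{and}\quad \lambda(\varepsilon-\eta)\geq\varepsilon .
\]
Choose representatives $x_i^{(n)}\in r_nY_n$ and $q^{(n)}$. For $\omega$-almost every $n$, the points $x_i^{(n)}$ are $(\varepsilon-\eta)$-sparse and lie in $B(q^{(n)},r/\lambda)$. The expansion $\varphi$ about $q^{(n)}$, applied in the rescaled space $r_nY_n$ (which is again geodesically complete $\cat{0}$), produces an $\varepsilon$-sparse set of $N$ points in $B(q^{(n)},r)$. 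Rescaling back to $Y_n$ changes $(r,\varepsilon)$ to $(r/r_n,\varepsilon/r_n)$, and the claim with $\lambda=r_n$ (eventually $\geq 1$) bounds this by $n_{r,\varepsilon}(Y_n)$. Therefore
\[
N\leq n_{r/r_n,\varepsilon/r_n}(Y_n)\leq n_{r,\varepsilon}(Y_n)\leq n_{r,\varepsilon}(\mathcal Y).
\]

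\emph{Part (2).} By (1), $Y_\infty$ has bounded geometry and hence is proper. The Tits metric is complete, so it suffices to show total boundedness. Since $\angle(\xi,\eta)=\min\{d_T(\xi,\eta),\pi\}$, it is enough to bound the cardinality of finite sets $\xi_1,\dots,\xi_N\in\partial Y_\infty$ with pairwise angles $\geq\theta$ for small $\theta$, uniformly in terms of $\theta$ alone.
\begin{itemize}
\item Let $c_i$ be the rays from $p_\infty$ to $\xi_i$. By convexity, $t\mapsto d(c_i(t),c_j(t))/t$ is nondecreasing, with limit $2\sin(\angle(\xi_i,\xi_j)/2)$.
\item Put $a=2\sin(\theta/2)$. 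For a large $t$, the points $c_i(t)$ are $(a-\eta)t$-sparse and lie in $B(p_\infty,2t)$.
\item Lift these points to $r_nY_n$. For $\omega$-almost every $n$ they give an $(a-2\eta)t$-sparse subset of a ball of radius $2t$ in $r_nY_n$. In $Y_n$ this is an $(a-2\eta)t/r_n$-sparse subset of a ball of radius $2t/r_n$.
\item Since $t/r_n\to0$, the claim with $\lambda=r_n/t$ gives
\[
N\leq n_{2,\,a-2\eta}(\mathcal Y),
\]
a bound independent of the chosen boundary points.
\end{itemize}
This yields total boundedness, and hence compactness, of $\partial_TY_\infty$.

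The main point to get right is the direction of monotonicity. Because of the claim, the large-scale Tits geometry of $Y_\infty$ can be bounded only by going back to the spaces $Y_n$ at the small scale $t/r_n$, not inside $Y_\infty$ itself. This is exactly where geodesic completeness of the $Y_n$ is essential. A secondary check is the identification of the angle with the Tits metric below $\pi$, which lets total boundedness in the angle metric at small scales give compactness for $d_T$.
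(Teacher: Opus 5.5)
Your proposal is correct. Part (1) is essentially the paper's argument: your Claim is the paper's capacity lemma ($n_{ar,a\varepsilon}\ge n_{r,\varepsilon}$ for $a>1$), and, as in the paper, you expand about $q^{(n)}$ and then rescale back to $Y_n$.

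For Part (2) you take a genuinely different route.

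\emph{The paper's route.} It fixes $r$ and uses its Gromov--Hausdorff lemma to identify $\omega\text{-}\lim(\bar B(p_n,r),p_n)$ with a compact space $D$. Using geodesic completeness, it realizes every ray of $Y_\infty$ as a rescaled ultralimit of segments from $p_n$ to the spheres $\partial B(p_n,r)$. This gives a surjection $\Psi$ from the compact sphere $\partial D$ onto $\partial Y_\infty$ with $2\sin(\angle(\Psi q,\Psi q')/2)\le d_D(q,q')/r$. Hence $(\partial Y_\infty,\angle)$ is a continuous image of a compact set, and so is compact.

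\emph{Your route.} You prove a direct packing bound: any $\theta$-separated subset of $(\partial Y_\infty,\angle)$ has at most $n_{2,\,2\sin(\theta/2)-2\eta}(\mathcal Y)$ elements. To get it, you lift finitely many points $c_i(t)$, taken at a single time $t$, to $Y_n$, where they sit at scale $t/r_n\to 0$, and you apply your Claim with $\lambda=r_n/t$. Compactness then follows from three facts: total boundedness, $\angle=\min\{d_T,\pi\}$, and completeness of the Tits metric of a complete $\cat{0}$ space (e.g.\ Bridson--Haefliger, Thm.~II.9.20).

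\emph{What each buys.} Your argument avoids the Gromov--Hausdorff lemma. It also avoids the somewhat delicate step of representing an entire ray of $Y_\infty$ as an ultralimit of segments, which needs a diagonal choice. In your proof, geodesic completeness enters only through the scaling Claim. You also get an explicit covering bound for $\partial_T Y_\infty$ that depends only on the capacities of $\mathcal Y$. The price is that you import completeness of the Tits metric as a black box; the paper's continuous-image argument does not need it.

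Your remark that $Y_\infty$ is proper is harmless but not used.
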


Before proving Proposition \ref{prop:tits_boundary}, we establish two useful lemmas.
\begin{lem}
\label{lem:capacity}
Suppose $X$ is a geodesically complete $\cat{0}$ space with bounded geometry, and for every $r>\varepsilon>0$, denote by $n_{r,\varepsilon}$ its $(r,\varepsilon)$-capacity. Then for any $a>1$, we have $$n_{ar,a\varepsilon} \geq n_{r,\varepsilon}.$$
\end{lem}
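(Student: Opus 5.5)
Fix $a>1$, $r>\varepsilon>0$, a point $x\in X$ and an $\varepsilon$-sparse set $\{x_1,\dots,x_N\}\subset B(x,r)$ with $N=n_{\varepsilon}(B(x,r))$. The plan is to ``blow up'' this configuration radially from the center $x$ by the factor $a$, using geodesic completeness to extend geodesics. The result will be an $a\varepsilon$-sparse set of the same cardinality inside $B(x,ar)$. Taking the supremum over $x$ then gives $n_{ar,a\varepsilon}\geq n_{r,\varepsilon}$.

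\textbf{Step 1: the radial map.} For each $i$, let $c_i:[0,d(x,x_i)]\to X$ be the geodesic from $x$ to $x_i$. Since $X$ is geodesically complete, $c_i$ extends to a geodesic ray or line, still denoted $c_i:[0,\infty)\to X$. Set $x_i'=c_i(a\,d(x,x_i))$, and $x_i'=x$ if $x_i=x$. Then
\[
d(x,x_i')=a\,d(x,x_i)<ar,
\]
so every $x_i'$ lies in $B(x,ar)$.

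\textbf{Step 2: sparseness via CAT(0) comparison.} We need $d(x_i',x_j')\geq a\,d(x_i,x_j)$ for $i\neq j$. The function $t\mapsto d(c_i(t),c_j(t))$ is the wrong tool, because the two curves are parametrized at different speeds. Instead, use the convexity of the distance function in CAT(0) spaces applied to the geodesics $\sigma_i(s)=c_i(s\,d(x,x_i'))$ and $\sigma_j(s)=c_j(s\,d(x,x_j'))$ for $s\in[0,1]$. Both start at $x$, and by construction $\sigma_i(1/a)=x_i$ and $\sigma_j(1/a)=x_j$. Convexity of $s\mapsto d(\sigma_i(s),\sigma_j(s))$, together with the value $0$ at $s=0$, gives
\[
d(x_i,x_j)=d(\sigma_i(1/a),\sigma_j(1/a))\leq \tfrac1a\, d(\sigma_i(1),\sigma_j(1))=\tfrac1a\, d(x_i',x_j').
\]
This is exactly the bound needed. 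Note that this step uses only that the extended curves are genuine geodesics on $[0,a\,d(x,x_i)]$, which is what geodesic completeness (extension of geodesic segments) provides. The degenerate case $x_i=x$ is covered by the same inequality, since then $x_i'=x=x_i$ and the inequality for the pair $(i,j)$ follows from $d(x,x_j')=a\,d(x,x_j)$.

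\textbf{Step 3: conclusion.} The points $x_i'$ are $a\varepsilon$-sparse and lie in $B(x,ar)$, so they are in particular pairwise distinct. Hence $n_{a\varepsilon}(B(x,ar))\geq N=n_{\varepsilon}(B(x,r))$, and taking the supremum over $x$ yields the lemma.

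The only potential obstacle is Step 2. If one naively parametrizes both geodesics by arc length, the comparison does not scale correctly. Reparametrizing proportionally to the endpoint distances, so that both geodesics reach $x_i$ and $x_j$ at the same time $1/a$, fixes this. It also requires the extensions to be geodesic, not merely local geodesics, up to length $a\,d(x,x_i)$; in a CAT(0) space local geodesics are geodesics, so this holds.
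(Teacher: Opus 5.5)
Your proposal is correct and follows the paper's proof: extend the geodesics from the center through each $x_i$ using geodesic completeness, take the points at distance $a\,d(x,x_i)$, and compare. Your convexity argument with the proportionally reparametrized geodesics $\sigma_i,\sigma_j$ simply spells out the ``standard CAT(0) estimate'' $d(x_i',x_j')\geq a\,d(x_i,x_j)$ that the paper invokes without detail.
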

\begin{proof}
For any $p \in X$, let $\{x_1,\cdots,x_N\} \subset B(p,r)$ be an $\varepsilon$-sparse subset. Since $X$ is geodesically complete, every geodesic segment starting from $p$ can be extended (not necessarily uniquely) to a geodesic ray. For every $1 \leq i \leq N$, let  $c_i:[0,\infty) \to X$ be a geodesic ray with $c_i(0)=p$ that passes through $x_i$. Choose $y_i \in c_i$ so that $d(p,y_i)=ad(p,x_i)<ar$. By standard $\cat{0}$ estimates we obtain $$d(y_i,y_j) \geq ad(x_i,x_j) \geq a \varepsilon \; (i\neq j).$$ Thus $\{y_1,\cdots,y_N\}$ is an $a\varepsilon$-sparse subset of $B(p,ar)$. The claim then follows from the arbitrariness of $p$.
\end{proof}

Recall that a family $\mathcal{X}$ of metric spaces is said to be 
 {\it uniformly compact} 
 (\cite[p.~74, 5.39 Definition]{bridson-haefliger}) if 
 \vspace{-0.1cm}
 \begin{itemize} 
  \setlength{\itemsep}{0.1cm}
  \item[(i)] There exists $C>0$ such that $\mathrm{diam}(X)\leq C$ for
  any $X \in \mathcal{X}$, and 
  \item[(ii)] For any $\varepsilon>0$, there exists 
  $N(\varepsilon)\in \N$ such that $N(\varepsilon)$ balls of radius
  $\varepsilon$ cover $X$ for any $X \in \mathcal{X}$; in other words, 
  $n_{2\varepsilon}(X)\leq N(\varepsilon)$ for any $X \in \mathcal{X}$. 
 \end{itemize}
 Note that, if $\mathcal{Y}$ is a family of complete $\cat{0}$ spaces with uniformly bounded geometry, then, for any fixed $r>0$, $\{\bar B(p,r) \mid Y \in \mathcal{Y},\ p \in Y\}$ is uniformly compact, where $\bar B(p,r)$ denotes the closed ball in $Y$ centered at $p$ with radius $r$.

The second lemma is as follows.
\begin{lem}\emph{(compare \cite[p.~79, 5.52 Exercise]{bridson-haefliger})}
\label{lem:ultralimit-GH}
Let $\{(X_n,p_n)\}$ be a uniformly compact sequence of pointed metric spaces. For any non-principal ultrafilter $\omega$ on $\mathbb{N}$, there exists a subsequence that converges in the Gromov-Hausdorff topology to a pointed metric space $(D,x)$, such that the ultralimit $(X_\infty,p_\infty)=\omega\text{-}\lim (X_n,p_n)$ is isometric to $(D,x)$.
\end{lem}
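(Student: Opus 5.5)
The proof runs through the Gromov--Hausdorff metric and the Gromov compactness theorem: uniform compactness yields a convergent subsequence, and the ultralimit is shown to coincide with its limit.

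\emph{Step 1: a subsequence converges.} Since $\{X_n\}$ is uniformly compact, Gromov's compactness theorem \cite[I.5.40]{bridson-haefliger} gives a subsequence that converges in the Gromov--Hausdorff topology to a compact metric space $D$. Pointed convergence is handled by also passing to a subsequence along which $p_n$ converges to some $x\in D$.

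\emph{Step 2: the subsequence should lie in $\omega$.} If it does not, a nonprincipal ultrafilter only sees sets of indices in $\omega$. So I would select the subsequence adapted to $\omega$ rather than arbitrarily. Concretely, I would use that the Gromov--Hausdorff space of (pointed) compact metric spaces satisfying (i),(ii) is compact metrizable. Then $\omega$-$\lim (X_n,p_n)=(D,x)$ exists in that topology. I would take
\[
A_k=\{n: d_{GH}((X_n,p_n),(D,x))<1/k\}\in\omega
\]
and pick increasing $n_k\in A_k$. This gives a subsequence converging to $(D,x)$ whose terms are $\omega$-close to the limit.

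\emph{Step 3: identify the ultralimit with $D$.} For each $n$ in $A_k$, fix $\epsilon_n$-isometries $f_n\colon D\to X_n$ with $\epsilon_n\to0$ along $\omega$ and $f_n(x)$ near $p_n$. Define
\[
\Phi\colon D\to X_\infty,\qquad \Phi(z)=\omega\text{-}\lim f_n(z).
\]
This is an isometric embedding, since the distortions vanish $\omega$-almost surely. Surjectivity follows from the $\epsilon_n$-density of $f_n(D)$. Given $\omega$-$\lim y_n$, choose $z_n\in D$ with $d(f_n(z_n),y_n)<\epsilon_n$. Compactness of $D$ gives an $\omega$-limit $z=\omega$-$\lim z_n$ in $D$, and then $\Phi(z)=\omega$-$\lim y_n$. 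The basepoint is respected.

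The main obstacle is the compatibility between the chosen subsequence and $\omega$. I would resolve it by taking the $\omega$-limit in the compact Gromov--Hausdorff space first, and only then extracting the subsequence.
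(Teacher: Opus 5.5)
Your argument is correct, but it runs in the opposite direction from the paper's.

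The paper never builds an abstract limit; it works inside $X_\infty$ throughout.
\begin{enumerate}
\item It bounds the cardinality of $\varepsilon$-sparse subsets of $X_\infty$ by $N(\varepsilon/4)$, so the complete space $X_\infty$ is totally bounded.
\item It takes a maximal $\varepsilon$-sparse set $\{x_1,\dots,x_K\}\subset X_\infty$ with representatives $x_{n,i}\in X_n$.
\item Using the auxiliary points $q_n$, it shows that $\{x_{n,1},\dots,x_{n,K}\}$ is $2\varepsilon$-dense in $X_n$ for $\omega$-almost every $n$.
\end{enumerate}
Hence $X_n$ is Gromov--Hausdorff close to $X_\infty$ itself for $\omega$-almost every $n$. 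Letting $\varepsilon\to 0$ along a diagonal choice of indices then gives the subsequence, with $D=X_\infty$.

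You instead obtain $(D,x)$ first, as the $\omega$-limit in the compact pointed Gromov--Hausdorff space. You then extract the subsequence from the sets $A_k\in\omega$, and only afterwards identify $D$ with $X_\infty$ through the $\epsilon_n$-isometries and the map $\Phi$. This ordering handles the compatibility between the subsequence and $\omega$ very cleanly, and the identification step is the standard one.

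The price is a heavier black box: compactness of the pointed Gromov--Hausdorff closure of the family. That needs Gromov's precompactness theorem together with completeness of the Gromov--Hausdorff space of compact metric spaces. Strictly speaking, the class with the same function $N(\varepsilon)$ need not be closed. A limit is only guaranteed to be covered by $N(\varepsilon/2)$ balls of radius $\varepsilon$, so you should take the $\omega$-limit in the closure of the class; this change is harmless.

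The paper's net argument is more elementary and uses only finite configurations. Its closing appeal to Gromov's criterion could in fact be replaced by a diagonal choice. It also yields the capacity bound for $X_\infty$ as a by-product.
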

\begin{proof}
We first show that $X_\infty$ has bounded geometry. For any $\varepsilon>0$, let $\{x_1,\cdots,x_K \}\subset X_\infty$ be an $\varepsilon$-sparse set. By the definition of ultralimits, for some sufficiently large $k \in \mathbb{N}$ we can find points $x_{k,1},\cdots,x_{k,K} \in X_k$ so that
\begin{equation*}
|d_{X_k}(x_{k,i},x_{k,j})-d_{X_\infty}(x_i,x_j)|<\varepsilon/2 \;\; (1\leq i,j \leq K).
\end{equation*}
It follows that $\{x_{k,1},\cdots,x_{k,K}\}$ is an $\varepsilon/2$-sparse set in $X_k$, whence $K \leq N(\varepsilon/4)$. 

Now suppose that the $\varepsilon$-sparse set $\{x_1,\cdots,x_K \}\subset X_\infty$ is chosen to be maximal, then it is $\varepsilon$-dense in $X_\infty$. In this case for every $1 \leq i \leq K$, let $\{x_{n,i}\}$ be a sequence with $x_{n,i} \in X_n$ whose ultralimit represents $x_i$. We denote $x_i=(x_{n,i})_{X_\infty}$. 

We select a sequence $\{q_n\}$ with $q_n \in X_n$ as follows. If $X_n$ is covered by $\bigcup\limits_{i=1}^K B(x_{n,i},2\varepsilon)$, then set $q_n=x_{n,1}$; otherwise choose any $q_n \in X_n \setminus  \bigcup\limits_{i=1}^K B(x_{n,i},2\varepsilon)$. We claim that $\omega\{n:q_n=x_{n,1}\}=1$, otherwise we would have $d_{X_\infty}(x_i,(q_n)_{X_\infty})=\omega\text{-}\lim d_{X_n}(x_{n,i},q_n)\geq 2\varepsilon$ for all $1 \leq i \leq K$, which contradicts the maximality of $\{x_1,\cdots,x_K \}$ in $X_\infty$. Therefore, for $\omega$-almost every $n$, the set $\{x_{n,1},\cdots,x_{n,K}\}$ is $2\varepsilon$-dense.

Therefore, the Gromov-Hausdorff distance $D_H(X_k,X_\infty) \leq 2\varepsilon$ for infinitely many $k$. Letting $\varepsilon \to 0$ and applying Gromov's precompactness criterion(\cite[p.~77, 5.45 Theorem]{bridson-haefliger}), we obtain a subsequence of $\{(X_n,p_n)\}$ that converges in the Gromov-Hausdorff topology to a pointed metric space $(D,x)$, which is isometric to $(X_\infty,p_\infty)$.
\end{proof}

Now we are ready to prove Proposition~\ref{prop:tits_boundary}.
\begin{proof}[Proof of Proposition~\ref{prop:tits_boundary}]
$(1)$. For any $q \in Y_\infty$, let $\{x_1,\cdots,x_N\}$ be a finite $\varepsilon$-sparse subset of $B(q,r)$. Then $r':=\max_{1\leq i \leq N} d_{Y_\infty}(q,x_i)<r$. For every $n \in \mathbb{N}^*$ and $1 \leq i \leq N$, choose $q_n, x_{n,i} \in r_nY_n$ so that $q= \omega\text{-}\lim q_n$ and $x_i=\omega\text{-}\lim x_{n,i}$. We have $\omega\text{-}\lim d_{r_nY_n}(q_n,x_{n,i})\leq r'$ and $\omega\text{-}\lim d_{r_nY_n}(x_{n,i},x_{n,j})\geq \varepsilon$ for all $i \neq j$. By the definition of ultralimit, for some sufficiently large $n$, we have
\begin{itemize}
 \setlength{\itemsep}{2pt}
\item[{\rm (i)}] $r_n>1$;
 \item[{\rm (ii)}] $d_{r_nY_n}(q_n,x_{n,i}) < \frac{r+r'}{2}$;
 \item[{\rm (iii)}] $d_{r_nY_n}(x_{n,i},x_{n,j})\geq \frac{r'+r}{2r} \varepsilon$.
\end{itemize}
Since each scaling space $r_nY_n$ is also geodesically complete, we can extend the unit speed geodesic segment emanating from $q_n$ and terminating at $x_{n,i}$ so that the new endpoint $x'_{n,i}$ satisfies $d_{r_nY_n}(q_n,x'_{n,i})=\frac{2r}{r+r'} d_{r_nY_n}(q_n,x_{n,i})<r$. We have 
\vspace{-0.1cm}
\begin{equation*}
d_{r_nY_n}(x'_{n,i},x'_{n,j}) \geq \dfrac{2r}{r+r'} d_{r_nY_n}(x_{n,i},x_{n,j})\geq \varepsilon \ \textrm{ for all } i \neq j.
\end{equation*}
Therefore $\{x'_{n,1},\cdots,x'_{n,N}\}$ is an $\varepsilon$-sparse subset of $B(q_n,r) \subset r_n Y_n$. Via the natural identification $i_n :r_nY_n \to Y_n$, we get immediately that $\{i_n(x'_{n,1}),\cdots,i_n(x'_{n,N})\}$ is a $\frac{\varepsilon}{r_n}$-sparse subset of $B(q_n,\frac{r}{r_n})\subset Y_n$, thus $$N \leq n_{\frac{r}{r_n},\frac{\varepsilon}{r_n}}(Y_n) \leq n_{r,\varepsilon}(Y_n) \leq n_{r,\varepsilon}(\mathcal{Y}),$$ where the second inequality follows from Lemma \ref{lem:capacity}. This finishes the proof of Part $(1)$.
\vspace{0.1cm}

$(2)$. First note that, since $r_n\to \infty$, 
the space $(Y_{\infty},p)=\omega\text{-}\lim (r_nY_n,p_n)$ is isometric to 
 $\omega\text{-}\lim (r_n \bar B(p_n,r),p_n)$ for any $r>0$. 
Fix $r>0$, and consider the ultralimit $\omega\text{-}\lim (\bar B(p_n,r),p_n)$.
Since $\mathcal{Y}$ has uniformly bounded geometry, by Lemma \ref{lem:ultralimit-GH}, $\omega\text{-}\lim (\bar B(p_n,r),p_n)$ is isometric to the Gromov-Hausdorff limit  of a subsequence of $\{(\bar B(p_n,r),p_n)\}$, which we denote by $(D,x)$.  Note that $D$, and hence $\partial D$, is compact. 

 We will show that $\partial D$ bounds $\partial_T Y_{\infty}$. 
 
 Since $Y_n$ is geodesically complete, any geodesic can be extended to a geodesic line. Therefore for every $\xi \in \partial Y_\infty$, the geodesic ray  $c_p^{\xi}$ in $Y_\infty$ with $c_p^\xi(0)=p$, $c_p^\xi(\infty)=\xi$ is a scaling ultralimit of geodesic segments $c_{p_n}^{q_n}$ in $\bar B(p_n,r)$ with endpoints $p_n$ and $q_n \in \partial B(p_n,r)$. We denote $c_p^{\xi}=(c_{p_n}^{q_n})_{Y_\infty}$. On the other hand, the ultralimit (without scaling) of $c_{p_n}^{q_n}$   
 represents a geodesic segment in $D$ starting from $x$ and terminating at $\partial D$, which we denote by $(c_{p_n}^{q_n})_D$. Every geodesic in $Y_{\infty}$ (resp.~$D$) starting from $p$
 (resp.~$x$) and terminating at $\xi \in \partial Y_{\infty}$
 (resp.~terminating at $q\in \partial D$) is obtained in this way. 

 Now take any $\xi, \xi' \in \partial Y_{\infty}$.  
 Then we can find sequences $\{c_{p_n}^{q_n}\}$ and $\{c_{p_n}^{q_n'}\}$ of geodesic segments with $q_n,q'_n \in \partial B(p_n,r)$ so that
 $c_p^{\xi}=(c_{p_n}^{q_n})_{Y_{\infty}}$ and 
 $c_p^{\xi'}=(c_{p_n}^{q_n'})_{Y_{\infty}}$. 
 Note that, because of our scaling by $\{r_n\}$, we have
 \begin{equation*}
  c_p^{\xi}(t) =(c_{p_n}^{q_n})_{Y_{\infty}}(t) =
   (c_{p_n}^{q_n}(t/r_n))_{Y_{\infty}}, \quad
  c_p^{\xi'}(t) = (c_{p_n}^{q_n'})_{Y_{\infty}}(t)=
  (c_{p_n}^{q_n'}(t/r_n))_{Y_{\infty}}. 
 \end{equation*}
 Suppose that $(c_{p_n}^{q_n})_D=(c_{p_n}^{q_n'})_D$ holds. 
 Then we see that 
 $(c_{p_n}^{q_n})_{Y_{\infty}}=(c_{p_n}^{q_n'})_{Y_{\infty}}$ holds.  
 Indeed, for any $t> 0$, we have
\begin{equation*}
\begin{aligned}
  d_{Y_{\infty}}((c_{p_n}^{q_n})_{Y_{\infty}}(t), 
  (c_{p_n}^{q_n'})_{Y_{\infty}}(t))
  &= \omega\text{-}\lim  r_n d_{Y_n}(c_{p_n}^{q_n}(t/r_n),
  c_{p_n}^{q_n'}(t/r_n)) \\
  & \leq \omega\text{-}\lim \dfrac{t}{r} d_{Y_n}(c_{p_n}^{q_n}(r),c_{p_n}^{q_n'}(r)) \\
  & =\dfrac{t}{r} d_{D}((c_{p_n}^{q_n})_D(r), (c_{p_n}^{q_n'})_D(r))=0, 
\end{aligned} 
\end{equation*}
 where the inequality in the second line follows from the convexity of
 the distance function (or simply the $\cat{0}$ property) of $Y$ and the fact that $t/r_n <r$ when $n$ is sufficiently large.   
 Thus we have a well-defined surjection
\begin{equation*}
 \Psi \colon \partial D \rightarrow \partial Y_{\infty}; \ 
 q=(q_n)_D \mapsto \xi=(c_{p_n}^{q_n})_{Y_{\infty}}(\infty). 
\end{equation*}

 Now we take a look at the angular metric $\angle(\cdot, \cdot)$ on
 $\partial Y_{\infty}$, which can be expressed as 
\begin{equation*}
 2\sin \frac{\angle(\xi,\xi')}{2}
 = \lim_{t\to \infty} \frac{1}{t} d_{Y_{\infty}}(c_p^{\xi}(t),c_p^{\xi'}(t))
\end{equation*}
 by \cite[p.~281, 9.8 Proposition (4)]{bridson-haefliger}. 
 For fixed $t>0$, we have 
\begin{equation*}
   \frac{1}{t} d_{Y_{\infty}}(c_p^{\xi}(t),c_p^{\xi'}(t))
  = \frac{1}{t} \omega\text{-}\lim
    r_nd_{Y_n}(c_{p_n}^{q_n}(t/r_n), c_{p_n}^{q_n'}(t/r_n))
  = \omega\text{-}{\lim} \frac{r_n}{t} d_{Y_n}(c_{p_n}^{q_n}(t/r_n),
  c_{p_n}^{q_n'}(t/r_n))
\end{equation*}
by definition. 
Take $n$ large so that $rr_n/t \geq 1$ holds. 
Then again by the convexity of the distance function,  we have
\begin{equation*}
 \frac{rr_n}{t} d_{Y_n}(c_{p_n}^{q_n}(t/r_n),  c_{p_n}^{q_n'}(t/r_n))
 \leq d_{Y_n}(c_{p_n}^{q_n}(r), c_{p_n}^{q_n'}(r)). 
\end{equation*}
 Thus,  for any $t$, by
 taking the ultralimit of both sides, we obtain 
  \begin{equation*}
   \frac{1}{t} d_{Y_{\infty}}(c_p^{\xi}(t),c_p^{\xi'}(t)) \leq
    \frac{1}{r} d_D((q_n)_D, (q_n')_D), 
  \end{equation*}
 and $q = (q_n)_D$ and  $q'=(q_n')_D$ are points in $\partial D$. 
 Note that, by the definition of $\Psi$, we have $\Psi(q)=\xi$ and
 $\Psi(q')=\xi'$. 
 Thus we obtain 
\begin{equation*}
 2\sin \frac{\angle(\Psi(q),\Psi(q'))}{2} \leq \frac{1}{r} d_{D}(q,q'). 
\end{equation*}
 Therefore the surjection 
 $\Psi \colon \partial D \rightarrow  (\partial Y_{\infty},\angle)$ is 
 a continuous map, and we see that
 $(\partial Y_{\infty},\angle)$ is compact. 
 Since $(\partial Y_{\infty},\angle)$ is homeomorphic to
 the Tits boundary $\partial_TY_{\infty}$,  $\partial_T Y_{\infty}$ is
 compact. 
\end{proof}

\section{A Kazhdan-type rigidity principle}
\label{sec:exists-an-almost}

In \cite[Proposition 3.5]{breuillard-fujiwara}, Breuillard and Fujiwara consider the \emph{asymptotic joint displacement} of a finite set $S$ of whose elements act on a metric $X$ by isometries
\begin{equation*}
L_{\infty}(S)= \lim_{n\rightarrow \infty} \max_{\gamma \in S^n} d(x,\gamma x)/n.
\end{equation*}
When $X$ is a finite-dimensional Euclidean space, they deduce that the group generated by $S$ has almost fixed points if and only if $L_{\infty}(S)=0$. An adaptation of the argument in Proposition \ref{lem:strongly_fixed_dim} allows us to generalize this result to all complete $\cat{0}$ of finite dimension.

\begin{prop}
Let $Y$ be a complete finite-dimensional $\cat{0}$ space and $\Gamma$ a finitely generated group acting on $Y$ via a homomorphism $\rho: \Gamma \rightarrow \isom{Y}$. Then $\rho(\Gamma)$ has almost fixed points in $Y$ if and only if $L_{\infty}(S)=0$ for some (and therefore any) finite generating set $S \subset \Gamma$, i.e., $L_{\infty}(S)=0$ if and only if $\delta(\rho)=0$.
\end{prop}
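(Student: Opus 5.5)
The forward implication is elementary. The reverse implication goes by induction on $\dim Y$, repeating the structure of Proposition \ref{lem:strongly_fixed_dim} with the hypothesis $\mathrm F\mathbb R^n$ replaced by the Euclidean case of Breuillard--Fujiwara.

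\emph{The direction $\delta(\rho)=0 \Rightarrow L_\infty(S)=0$.} For any $x\in Y$ and $\gamma=s_1\cdots s_n\in S^n$, the triangle inequality and invariance of the metric give
\[
d(x,\gamma x)\le \sum_i d(x,s_ix)\le n\max_{s\in S} d(x,sx).
\]
Since $L_\infty(S)$ does not depend on the base point, this yields $L_\infty(S)\le \max_{s\in S} d(x,sx)$ for every $x$, hence $L_\infty(S)\le\delta(\rho)$.

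\emph{Independence of $S$.} If $S'$ is another finite generating set, then $S'\subset (S\cup S^{-1})^k$ for some $k$, so $L_\infty(S')\le k\,L_\infty(S\cup S^{-1})$. Moreover $L_\infty(S\cup S^{-1})=0$ whenever $L_\infty(S)=0$, since $d(x,s^{-1}x)=d(x,sx)$. So vanishing of $L_\infty$ does not depend on the generating set. Similarly, $\delta(\rho)=0$ does not depend on it. We may therefore take $S$ symmetric.

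\emph{The direction $L_\infty(S)=0\Rightarrow\delta(\rho)=0$.} Let $\mu$ be the uniform measure on the symmetric set $S$. It is finitely supported, so it has finite second moment, and since $\mu^{*n}$ is supported in $S^n$,
\[
l_\mu(\rho)\le L_\infty(S)=0.
\]
Theorem \ref{thm:IK23} now gives two cases.

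\emph{Case 1: $\rho(\Gamma)$ preserves a flat $F\cong\R^k$.} The joint displacement on $F$ is still governed by $L_\infty(S)=0$. The Euclidean result of Breuillard--Fujiwara \cite[Proposition 3.5]{breuillard-fujiwara} then gives almost fixed points in $F\subset Y$, so $\delta(\rho)=0$.

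\emph{Case 2: $\rho(\Gamma)$ fixes a point $\xi\in\partial Y$.} Suppose, for contradiction, that $\delta(\rho)>0$.
\begin{itemize}
\item Apply Lemma \ref{lem:ultralimit}. This gives $Z'=Z_1\times\R\subset Z$ with a splitting-preserving action $\rho'$. Since $Z'\subset Z$, its joint minimal displacement is at least that on $Z$, so $\delta(\rho'|_{Z'})\ge\delta(\rho)>0$.
\item Using $d(o',\rho'(\gamma)o')\le d(o,\rho(\gamma)o)$ (as in Lemma \ref{lem:bounded geometry}), we get $L_\infty^{Z'}(S)\le L_\infty^Y(S)=0$.
\item The translation part $\chi(\gamma)=\pi(\rho'(\gamma)o')-\pi(o')$ is a homomorphism $\Gamma\to\R$ with $|\chi(\gamma)|\le d(o',\rho'(\gamma)o')$. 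Hence, for each $s\in S$,
\[
|\chi(s)|=|\chi(s^n)|/n\le \max_{\gamma\in S^n} d(o',\rho'(\gamma)o')/n\to0,
\]
so $\chi\equiv 0$. This is the step where the group-theoretic hypothesis of Lemma \ref{lem:splitting} is replaced by the vanishing of $L_\infty$.
\item Since $\chi\equiv0$, the action is trivial on the $\R$-factor. Thus the displacement of $(z,t)$ equals the displacement of $z$ under the induced action on $Z_1$. It follows that $\delta$ of the action on $Z_1$ is at least $\delta(\rho)>0$, while $L_\infty^{Z_1}(S)=0$.
\end{itemize}
Now $Z_1$ is a complete CAT(0) space with $\dim Z_1<\dim Y$, using the dimension bound for ultralimits. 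The induction hypothesis applied to the action on $Z_1$ gives $\delta=0$ there, which is a contradiction. The induction starts at dimension $0$, where the space is a point and the statement is trivial. Hence $\delta(\rho)=0$.

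The main delicate points are the transfer of the two quantities to $Z_1$. The displacement bound $\delta\ge\delta(\rho)$ needs the triviality of the $\R$-action. The vanishing $L_\infty=0$ needs the distance monotonicity of the ultralimit construction.
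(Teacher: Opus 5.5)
Your proof of the substantive direction follows the paper's proof. The random walk on $S$ has $l_\mu(\rho)\le L_\infty(S)=0$. Theorem~\ref{thm:IK23} then splits into the flat case, handled by Breuillard--Fujiwara, and the boundary case. In the boundary case, Lemma~\ref{lem:ultralimit} gives $Z_1\times\R$, and its translation character is killed by $L_\infty(S)=0$ rather than by a group-theoretic hypothesis. You organize the descent as an induction on $\dim Y$ that carries both $\delta\ge\delta(\rho)$ and $L_\infty=0$ down to $Z_1$. This is a cleaner version of the paper's ``iterate finitely many times'' step, and it is correct.

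The one step that does not hold is your reduction to symmetric $S$. The identity $d(x,s^{-1}x)=d(x,sx)$ controls only single letters. The set $(S\cup S^{-1})^n$ also contains mixed words such as $(ab^{-1})^m$, and the hypothesis $\max_{\gamma\in S^n}d(x,\gamma x)=o(n)$ says nothing about their displacement. So the implication ``$L_\infty(S)=0\Rightarrow L_\infty(S\cup S^{-1})=0$'' is unproved. It would follow from the proposition for non-symmetric $S$, but that is exactly what is at stake.

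Because Theorem~\ref{thm:IK23} requires a symmetric measure, your argument establishes the following: $\delta(\rho)=0$ iff $L_\infty(S)=0$ for some \emph{symmetric} finite generating $S$, iff $L_\infty(S')=0$ for every finite generating $S'$. Your forward direction works for arbitrary $S'$. The paper's proof makes the same restriction tacitly when it takes a symmetric $\mu$ supported on $S$. You should either state that restriction explicitly or give a genuine argument for non-symmetric $S$, instead of the one-line justification.
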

\begin{proof}
It is straightforward to verify $L_\infty(S)\leq \delta(\rho)$, see, for example, \cite[Lemma 1.1]{breuillard-fujiwara}. Thus, $\delta(\rho)=0$ implies $L_{\infty}(S)=0$.

Conversely, suppose that $L_{\infty}(S)=0$. Let $\mu$ be a symmetric probability measure supported on $S$. We have
\begin{eqnarray*}
l_{\mu}(\rho)&=&\lim_{n\to \infty}
 \frac{\int_{\Gamma}d(x,\rho(\gamma)x) d\mu^{*n}(\gamma)}{n} \\
 &\leq& \lim_{n \to \infty} \frac{\int_{\Gamma} \max_{\gamma_0 \in S^n} d(x,\rho(\gamma_0) x) d\mu^{*n}(\gamma)}{n}\\
 &=& L_\infty(S)=0.
\end{eqnarray*}
It then follows from Theorem \ref{thm:IK23} that either $\rho(\Gamma)$ fixes a point in $\partial Y$, or it preserves a flat subspace of $Y$. The latter case reduces to the Euclidean case, which was settled in \cite[Proposition 1.10]{breuillard-fujiwara}.

It remains to consider when $\rho(\Gamma)$ fixes a point in $\partial Y$. By Lemma \ref{lem:ultralimit}, there exists a complete $\cat{0}$ space $Z'=Z_0 \times \mathbb{R}$, on which $\Gamma$ acts via a homomorphism $\rho':\Gamma \rightarrow \isom{Z}$ preserving the splitting. Let $\pi:Z'=Z_1 \times \mathbb{R} \to \mathbb{R}$ be the projection onto the second factor, we claim that $\rho'(\Gamma)$ acts trivially on the $\mathbb{R}$-factor. Otherwise, there exist $p' \in Z'$ and $\gamma \in S^k$ such that $d(\pi(p'),\pi(\rho'(\gamma)p'))>0$. This would imply
\begin{eqnarray*}
L_\infty(S) &\geq& \dfrac{\lim_{n \to \infty} d(x,\rho(\gamma^n)x)}{kn} \\
& \geq & \dfrac{\lim_{n \to \infty} d(p',\rho'(\gamma^n)p')}{kn} \\
& \geq & \lim_{n \to \infty} \dfrac{d(\pi(p'), \pi(\rho'(\gamma^n)p'))}{kn} \\
&=& \dfrac{\lim_{n \to \infty} n d(\pi(p'), \pi(\rho'(\gamma)p'))}{kn} \\
&=& \dfrac{d(\pi(p'), \pi(\rho'(\gamma)p'))}{k}>0,
\end{eqnarray*}
contradicting $L_\infty(S)=0$. Hence $\rho'(\Gamma)$ acts trivially on the $\mathbb{R}$-factor and therefore induces an isometric action on $Z_1$, with $\dim Z_1<\dim Y$. 

Since $\dim Y$ is finite, we can only iterate this procedure finitely many times, and eventually obtain a fixed point. An argument similar to that in the last paragraph of Lemma \ref{lem:splitting} then yields an almost fixed point in $Y$.
\end{proof}

\begin{rem}
In an infinite dimensional Hilbert space, it is possible to have $L_\infty(S)=0$ while $\delta(\rho)>0$, see \cite[section 3.9]{cornulier-tessera-valette} and the discussion in \cite[section 3.2]{breuillard-fujiwara}.
\end{rem}

For $\cat{0}$ spaces with compact Tits boundary, which are considered to be close to finite-dimensional Euclidean spaces in a sense, Propositions \ref{lem:strongly_fixed} and \ref{lem:strongly_fixed_dim} imply the existence of almost fixed points as follows. 

\begin{prop}
\label{prop:bdd_geom_almost_fix}    
 Let $Z$ be a complete $\cat{0}$ space with bounded geometry and compact Tits boundary $\partial_T Z$.  Suppose either
 \begin{itemize}
 \setlength{\itemsep}{2pt}
     \item[{\rm (i)}] $\Gamma$ has the fixed-point property for all finite-dimensional Euclidean spaces, or
     \item[{\rm (ii)}] there exists a positive integer $n$ such that $\Gamma$ has the fixed-point property for $\R^n$ and $\dim Z \leq n$.  
 \end{itemize}
 Then, for any homomorphism $\rho \colon \Gamma \rightarrow \isom{Z}$, $\delta(\rho)=0$ holds. 
\end{prop}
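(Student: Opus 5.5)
The strategy is to reduce to Propositions \ref{lem:strongly_fixed} and \ref{lem:strongly_fixed_dim}. Both say that zero drift for a symmetric generating measure $\mu$ with finite second moment forces $\delta(\rho)=0$. So it suffices to show that $l_\mu(\rho)=0$ for one such $\mu$; I would take $\mu$ uniform on $S\cup S^{-1}$. Then case (i) follows from Proposition \ref{lem:strongly_fixed}, using bounded geometry. Case (ii) follows from Proposition \ref{lem:strongly_fixed_dim}, using $\dim Z\le n$ together with the monotonicity of property $\mathrm F\mathbb R^m$ in $m$.

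To prove zero drift, argue by contradiction and assume $l_\mu(\rho)=\ell>0$. By Karlsson--Margulis \cite{karlsson-margulis}, for almost every sample path $Z_n=X_1\cdots X_n$ there is a geodesic ray $c$ from $o$ with
\[
d(Z_n o, c(\ell n))=o(n).
\]
Hence $Z_n o$ converges to a random boundary point $\xi\in\partial Z$. Let $\nu$ be the law of $\xi$. This is a $\mu$-stationary probability measure on $\partial Z$, and I would work with the Tits-topology version of it.

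The key input is compactness of $\partial_T Z$. Since $\partial_T Z$ is compact, it has finite Tits diameter and, more usefully, admits a Tits circumcenter theory. In a complete CAT(1) space of small radius, a bounded set has a unique circumcenter (Balser--Lytchak). I would push this through as follows. Symmetry of $\mu$ together with reversal of the walk gives a second limit point $\xi^-$, the limit of the backward walk. The points $\xi$ and $\xi^-$ are at Tits distance $\ge\pi$ almost surely, because the bi-infinite path tracks a geodesic line by the same sublinear-tracking argument applied in both directions. Thus the limit points fill out antipodal pairs.

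I would then use the following compact-Tits-boundary argument, which I expect to be the main obstacle. Consider the $\Gamma$-invariant closed set $\Lambda\subset\partial_T Z$ given by the support of the limit distribution. There are two cases.
\begin{itemize}
\item If $\Lambda$ has Tits radius $<\pi/2$, its circumcenter is a $\Gamma$-fixed point in $\partial Z$.
\item Otherwise, by compactness of $\partial_T Z$ and the structure theory of Caprace--Lytchak \cite{caprace-lytchak} for finite-dimensional boundaries, the action has a canonical invariant Euclidean factor or invariant flat.
\end{itemize}

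Both cases must produce zero drift or a contradiction, and here the Euclidean fixed-point hypothesis enters. If there is an invariant flat, the action on it has a fixed point, so the drift vanishes. If there is a fixed point $\xi\in\partial Z$, apply Lemma \ref{lem:bounded geometry}. Since $\Gamma$ has no nontrivial homomorphism to $\R$, it produces a space $Z_0$ with no fixed point at infinity and with drift no larger than $\ell$. However, the lemma does not obviously preserve compactness of the Tits boundary, so I would rerun the dichotomy on $Z_0$, where the boundary-fixed-point case is now excluded.

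The cleanest route I would attempt is different: show directly that positive drift on a space with compact $\partial_T Z$ forces a Busemann character. If $\ell>0$, the Karlsson--Margulis limit gives a horofunction $h$ with $h(Z_n o)/n\to-\ell$. Averaging $h\circ\rho(\gamma)$ over the compact space of Tits-nearby boundary points would produce a $\Gamma$-quasi-invariant function. Its homogenization yields a nontrivial homomorphism $\Gamma\to\R$, contradicting $\mathrm F\mathbb R^1$. Making this averaging rigorous is where I anticipate the difficulty.
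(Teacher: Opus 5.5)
\textbf{There is a genuine gap: the zero-drift step is never established.} Your reduction to Propositions \ref{lem:strongly_fixed} and \ref{lem:strongly_fixed_dim} is the same as the paper's. The problem is in proving $l_\mu(\rho)=0$.

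\emph{The circumcenter dichotomy.} Its second branch is unjustified. Nothing in Caprace--Lytchak or Adams--Ballmann produces an invariant flat for an arbitrary group merely from a limit set of Tits radius $\ge\pi/2$; those results need amenability-type input. Worse, your own antipodality observation shows this branch is the only one that occurs. Since $\mu$ is symmetric, $\xi^+$ and $\xi^-$ have the same law and are at Tits distance $\pi$ almost surely. So $\Lambda$ always has radius $\ge\pi/2$, the circumcenter branch is vacuous, and everything rests on the unjustified branch.

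\emph{The detour through Lemma \ref{lem:bounded geometry}.} This goes the wrong way. It only gives $l_\mu(\rho_0)\le l_\mu(\rho)$ and loses compactness of the Tits boundary, so it cannot show $l_\mu(\rho)=0$.

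\emph{The averaging route.} As you acknowledge, it is not yet an argument.

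\textbf{The missing idea} is to exploit compactness of $\partial_T Z$ through its isometry group.
\begin{enumerate}
\item Since $\partial_T Z$ is compact, $\mathrm{Isom}(\partial_T Z)$ is compact (Arzel\`a--Ascoli). Also, the identity $\partial_T Z\to\partial Z$ is a homeomorphism.
\item Suppose $\ell>0$. Let $m$ be the hitting measure on $\partial Z$, i.e.\ the pushforward of the Poisson boundary measure under the Karlsson--Margulis equivariant map. Then $m$ is $\mu$-stationary, and for a.e.\ sample path $\rho(g_n)_*m\to\delta_\eta$.
\item Pass to a subsequence with $\rho(g_n)\to h$ in $\mathrm{Isom}(\partial_T Z)$. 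Then $h_*m=\delta_\eta$, so $m=\delta_\xi$ is a Dirac mass.
\item Stationarity of $\delta_\xi$ forces every element of $\mathrm{supp}\,\mu$, hence all of $\Gamma$, to fix $\xi$.
\item Now $\gamma\mapsto b_\xi(\rho(\gamma)p,p)$ is a homomorphism $\Gamma\to\R$. It is trivial because $\Gamma$ has $\mathrm F\R^1$.
\item But the walk tracks the ray from $p$ to $\xi$ with linear speed $\ell$. Busemann functions are $1$-Lipschitz, so $b_\xi(\rho(g_n)p,p)\le -\ell n+o(n)$, a contradiction.
\end{enumerate}
Once $m$ is known to be Dirac, your antipodality claim would also finish the argument. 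Indeed $\xi^+=\xi^-$ almost surely, which is incompatible with their Tits distance being $\pi$.
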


\begin{proof}
 Under either assumption (i) or (ii), $\Gamma$ does not admit a nontrivial homomorphism into $\R$.
 Using this fact, we first show that the drift $l_{\mu}(\rho)$ of $\rho(\Gamma)$ with respect to any symmetric generating probability measure $\mu$ with finite second moment is equal to zero.
 
 Note that there is a natural homomorphism 
$\Phi\colon \isom{Z} \rightarrow \isom{\partial_TZ}$,
 and $\isom{\partial_TZ}$ is compact because of the compactness
 of $\partial_T Z$. (We denote $\Phi(\rho_{\infty}(g))$ by
 $\rho_{\infty}(g)$ in what follows.) 
Note also that $\partial_T Z$ is
 homeomorphic to the usual geometric boundary $\partial Z$,
 since the identity map 
$\partial_T Z\rightarrow \partial Z$ is continuous, 
$\partial_{T} Z$ is compact and $\partial Z$ is 
 Hausdorff. 

Take any symmetric generating probability measure $\mu$ with finite
 second moment on $\Gamma$ and consider a random walk generated by $\mu$. 
First suppose that the drift $l_{\mu}(\rho_{\infty})$ of 
$\rho_{\infty}(\Gamma)$ is positive.  
Then, by \cite{karlsson-margulis} of Karlsson-Margulis, almost every sample walk
 converges to a point in $\partial Z$, and there is an
 equivariant map $\varphi$ from the Poisson boundary 
$(\partial_P\Gamma, \nu)$ to $\partial_T Z$. 
Let $m=\varphi_* \nu$. For almost all random walk paths $g_n$, one has
the sequence of measures 
$\{(\rho_{\infty}(g_n))_*(m) \rightarrow \delta_{\eta}
 \}$ converging to a Dirac measure supported on a point $\eta \in \partial Z$. 
 
Since $\isom{\partial_TZ}$ is compact, we may assume
 that $\rho_{\infty}(g_n)$ converges (up to a subsequence) to some 
$h\in \isom{\partial_TZ}$ (depending on the choice of $\{g_n\}$).
Therefore $h_*m=\delta_{\eta}$ 
and since $h$ is invertible $m$ is also a Dirac measure, say $m=\delta_{\xi}$. Now, 
$m$ is $\mu$-stationary, so 
$$
\delta_{\xi}=\sum_{\gamma} \mu(\gamma) \delta_{\rho_{\infty}(\gamma)\xi}. 
$$
Thus every element in $\mathrm{supp}\mu$ fixes $\xi$ and since the support generates the whole group, $\Gamma$ fixes $\xi$.

Now denote by $c_p^q$ the geodesic starting from $p$ and terminating at $q$. 
The proof of \cite[Theorem 2.1]{karlsson-margulis} asserts that, for almost
 all sample walk $g_n$ of $(\Gamma,\mu)$, 
the sequence of geodesics $\{c_p^{\rho_{\infty}(g_n)p}\}$ converges
 to the geodesic ray $c_p^{\xi}$ starting from $p$ and terminating at
 $\xi\in \partial Z$.  
 Furthermore, since we already assume that the drift $l_{\mu}(\rho_{\infty})>0$,
 we have
\begin{equation*}
 \lim_{n\to\infty} b_{\xi}(\rho_\infty(g_n)p,p)=-\infty
\end{equation*}
 as explained in \cite[Proposition 4.2]{karlsson-linear} or 
 \cite[\S 3]{karlsson-ledrappier}, where $b_{\xi}(\cdot,p)$ is the
 Busemann function associated to the geodesic ray $c_p^{\xi}$ defined as
\begin{equation*}
 b_{\xi}(q,p)= \lim_{t\to \infty}
   d(q,c_p^{\xi}(t))-t.
\end{equation*}
 However, since $\rho_{\infty}(\Gamma)$ fixes $\xi$ and $\Gamma$ does not
 admit any nontrivial homomorphism into $\R$, $\rho_{\infty}(\Gamma)$ leaves
 each horosphere centered at $\xi$
 (which is given as the level set of $b_{\xi}(\cdot,p)$) invariant, 
 and hence $b_{\xi}(\rho(g_n)p,p)$ must be constant.
This leads to a contradiction. Thus, we conclude that the drift $l_{\mu}(\rho_{\infty})$ is equal to $0$. 

 Suppose that $\Gamma$ satisfies the assumption (i). Since $Z$ has bounded geometry, in view of Proposition~\ref{lem:strongly_fixed}, this implies that $\rho(\Gamma)$ has almost fixed points in $Z$, that is $\delta(\rho)=0$. 
 Suppose that $\Gamma$ and $Z$ satisfy the assumption (ii).  
 Then, by Proposition~\ref{lem:strongly_fixed_dim}, we see that $\delta(\rho)=0$. 
 The proof is complete. 
%
\end{proof}

Using Proposition~\ref{prop:tits_boundary} and
Proposition~\ref{prop:bdd_geom_almost_fix} above, we prove Theorem \ref{thmD}. 

\begin{thm}
\label{thm:almost_fix} \emph{(Theorem \ref{thmD})}
 Let $\Gamma$ be a group generated by a finite set $S$ and $\mathcal{Y}$ a family of geodesically complete $\cat{0}$ spaces with uniformly bounded geometry. 
 Suppose either
 \begin{itemize}
 \setlength{\itemsep}{2pt}
     \item[{\rm (i)}] $\Gamma$ has the fixed-point property for all finite-dimensional Euclidean spaces, or
     \item[{\rm (ii)}] there exists a positive integer $n$ such that $\Gamma$ has the fixed-point property for $\R^n$ and $\dim Y \leq n$ for all $Y \in \mathcal{Y}$. 
 \end{itemize}
 Then there is a constant $\kappa (S, \mathcal{Y})>0$ such that, for any $Y\in \mathcal{Y}$ and homomorphism $\rho :\Gamma \rightarrow \mathrm{Isom}(Y)$ providing a fixed-point-free action, the inequality
 $\delta(\rho ) \geq \kappa (S ,\mathcal{Y})$ holds.

\end{thm}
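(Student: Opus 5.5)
We argue by contradiction. Suppose no such $\kappa$ exists. Then there are $Y_n\in\mathcal{Y}$ and homomorphisms $\rho_n:\Gamma\to\isom{Y_n}$, each fixed-point-free, with $\delta_n:=\delta(\rho_n)\to 0$. We may assume $\delta_n>0$. If $\delta_n=0$ for some $n$, we would have almost fixed points without a fixed point. The scaling argument below handles this case as well: choose points with small positive displacement and rescale by the displacement at a nearly optimal point.

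The idea is to rescale so that the joint minimal displacement becomes $1$. Choose $p_n\in Y_n$ with $\max_{s\in S}d(p_n,\rho_n(s)p_n)\le 2\delta_n$, and set $r_n=1/\delta_n\to\infty$. Form the ultralimit $(Y_\infty,p_\infty)=\omega\text{-}\lim(r_nY_n,p_n)$. Since the generators move $p_n$ by at most $2$ in the rescaled metric, $\Gamma$ acts on $Y_\infty$ via a limit homomorphism $\rho_\infty$. Moreover, for every $q=\omega\text{-}\lim q_n$ we have
$$\max_{s}d(q,\rho_\infty(s)q)=\omega\text{-}\lim r_n\max_s d(q_n,\rho_n(s)q_n)\ge 1.$$
Hence $\delta(\rho_\infty)\ge 1>0$.

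By Proposition~\ref{prop:tits_boundary}, $Y_\infty$ is a complete $\cat{0}$ space of bounded geometry with compact Tits boundary. In case (ii) we also need $\dim Y_\infty\le n$. This follows because ultralimits of spaces of dimension at most $n$ have dimension at most $n$ (\cite[Proposition 6]{izeki-karlsson}), and rescaling does not change dimension. Now Proposition~\ref{prop:bdd_geom_almost_fix} applies under hypothesis (i) or (ii) and gives $\delta(\rho_\infty)=0$. This contradicts $\delta(\rho_\infty)\ge 1$.

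The main obstacle is the degenerate case where $\delta(\rho_n)=0$ even though $\rho_n$ has no fixed point. Here the plain rescaling by $1/\delta_n$ is unavailable, and I would treat it first, separately. Since $Y_n$ has bounded geometry, it is proper. A fixed-point-free action with almost fixed points must then have its almost fixed points escaping to infinity, for otherwise a compactness argument produces a fixed point. I would apply the rescaling trick using the displacement function $f_n(x)=\max_s d(x,\rho_n(s)x)$. Pick a point $x$ with $f_n(x)=\epsilon$ small. Let $R$ be the distance from $x$ to the fixed set, or to infinity if that set is empty. The main substep is to locate a point $y$ and a scale $\epsilon'$ such that $f_n\ge\epsilon'$ on a ball of radius $\gg\epsilon'$ around $y$ while $f_n(y)\le 2\epsilon'$. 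This is a Gromov-type "minimal displacement up to factor 2" lemma, proved by iterating: if some nearby point has half the displacement, move there. The moves form a geometric series, so the iteration stays within bounded distance unless it escapes, which properness prevents.

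Rescaling at such points yields an ultralimit action on all of $Y_\infty$ with $\delta\ge 1/2$, after the balls grow to exhaust the limit. This again contradicts Proposition~\ref{prop:bdd_geom_almost_fix}. The same localization also covers the sequence case with $\delta_n>0$.
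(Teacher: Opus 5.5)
Your proposal is correct and follows essentially the paper's proof. The case $\delta_n>0$, $\delta_n\to 0$ is handled by rescaling by $1/\delta_n$. The degenerate case $\delta(\rho)=0$ without a fixed point is handled by the Gromov-type localization, which the paper does not reprove but simply cites as \cite[Lemma 3.2]{bourdon-fix}. Each case then concludes with Proposition~\ref{prop:tits_boundary}, the dimension bound, and Proposition~\ref{prop:bdd_geom_almost_fix}.

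Two small clarifications on your sketch of the halving iteration. First, it terminates by completeness rather than properness: the iterates form a Cauchy sequence, and their limit would be a fixed point. Second, starting from points with $f(x)=\epsilon\to 0$ is what guarantees the scales $1/\epsilon'\to\infty$ that Proposition~\ref{prop:tits_boundary} requires.
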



\begin{rem}\label{remark-boundedgeometry} Let $\Gamma$ be a finitely generated group with the fixed-point property for all finite-dimensional Euclidean spaces, and $Y$ a geodesically complete $\cat{0}$ space with bounded geometry.  Taking $\mathcal{Y}$ to be a family consisting of a single $\cat{0}$ space $Y$ in Theorem \ref{thm:almost_fix}, we see that, in particular, the action given by $\rho\colon \Gamma \rightarrow \isom{Y}$ with $\delta(\rho)=0$ admits a fixed point in $Y$.
 Here the assumptions ``$\Gamma$ has fixed-point property for
 $\R^n$'' and ``$Y$ has bounded geometry'' are both essential. 
 Let $Z$ be a $\cat{0}$ space, and consider a warped product 
$Y=\R\times_f Z$.  If we take $f(t)=e^t$, then $Y$ is known to be a
 $\cat{-1}$ space (\cite[Theorem 1.1]{alexander-bishop}), and 
$\R \times \{z\}$ becomes a geodesic line in $Y$ for each $z\in Z$.  
Suppose a group $\Gamma$ acts on $Z$ without fixed point. 
Then the action of $\Gamma$ on $Z$ extends naturally to a fixed-point
free action on $Y$ by letting $\Gamma$ act trivially on $\R$,  
while it strongly fixes a point in $\partial Y$ represented by geodesic
 rays given by the form of $t \mapsto (-t,z)$, $z \in Z$; 
in particular, there are almost fixed points along these geodesic rays.
For example, we can take an infinite, finitely generated amenable torsion group with a fixed point free action on a Hilbert space $Z$. 
If we take $Z$ to be $\R^n$ with a standard $\Z^n$-action by
 translations, the warped product above gives us a hyperbolic space
 $\mathbb{H}^{n+1}$, which has bounded geometry, and a $\Z^n$-action
 with almost fixed points. 
On the other hand, it is easy to see that unless the boundedness of the
geometry of $Z$ is (almost) invariant under scaling (i.e., almost
$n_{\alpha r,\alpha \varepsilon}(Z)=n_{r,\varepsilon}(Z)$ holds for
$\alpha>0$), like $\R^n$, $Y$ cannot have bounded geometry. In the opposite case, Breuillard-Fujiwara observed \cite[Prop 1.10]{breuillard-fujiwara} that for any set of isometries of a Euclidean space that possesses almost fixed-points, there is a common fixed-point.
\end{rem}

\begin{proof}[Proof of Theorem~\ref{thm:almost_fix}]
Suppose first that there exist $Y\in \mathcal{Y}$ and an action $\rho(\Gamma)$ on $Y$ which does not have a fixed point in $Y$, while there are almost fixed points, so $\delta (\rho)=0$. Then
 take $q_n$ so that $$1/n \geq \delta (q_n):=\max_{s\in S}d(q_n,\rho(s)q_n)$$
 for some finite generating set $S$ of $\Gamma$. Then, as in \cite[Lemma 3.2]{bourdon-fix}, 
 we can take $p_n \in Y$ and $\{r_n\}\subset \R$ with $r_n\to \infty$
 so that $(Y_{\infty},p)=\omega\text{-}\lim (r_nY,p_n)$ and 
 $\rho_{\infty}\colon \Gamma \rightarrow \isom{Y_{\infty}}$
  satisfies $$\inf_{q \in Y_{\infty}} \delta_{\infty}(q)>0,$$ where
 $\delta_{\infty}(q)=\max_{s\in S}d(q,\rho_{\infty}(s)q)$. By applying Proposition~\ref{prop:tits_boundary} to the family consisting of a single $\cat{0}$ space $Y$ with bounded geometry, $Y_{\infty}$ has bounded geometry and compact Tits boundary.
 Noting that the ultralimit does not increase the dimension, under either assumption (i) or (ii), we see that this contradicts Proposition~\ref{prop:bdd_geom_almost_fix}.  

In the complementary case, all fixed-point-free actions on $Y\in \mathcal{Y}$ have strictly positive minimal displacement. Then seeking a contradiction, assume that there exist $\{Y_n\} \subset \mathcal{Y}$  and homomorphisms $\rho_n:\Gamma \rightarrow \mathrm{Isom}(Y_n)$ such that $\delta_n:=\delta(\rho_n)>0$ and $\delta_n \searrow 0$. Take $y_n \in Y_n$ such that
\begin{equation}
 \label{eq:almost}
 \max_{s\in S} d_{Y_n}(y_n, \rho_n(s)y_n) \leq 2\delta_n,
 \tag{$\dagger$ }
 \end{equation}
and take an ultralimit $(Y_{\infty},y_{\infty})=\omega\text{-}\lim ((1/\delta_n)Y_n,y_n)$.
By (\ref{eq:almost}), we have a well-defined homomorphism $\rho_{\infty}:\Gamma \rightarrow \textrm{Isom}(Y_{\infty})$ and, by our construction, we see that $\delta (\rho_{\infty})\geq 1$.
By Proposition~\ref{prop:tits_boundary}, $Y_{\infty}$ has bounded geometry and compact Tits boundary $\partial_{T}Y_{\infty}$. Again, under either assumption (i) or (ii), this contradicts Proposition~\ref{prop:bdd_geom_almost_fix}.  

The proof is complete.
\end{proof}

Although the constant $\kappa(S,\mathcal{Y})$ should depend on the group $G$ generated by $S$, it behaves well on the space of marked groups.  In what follows, we denote by $\kappa_S(G,\mathcal{Y})$ the constant $\kappa(S,\mathcal{Y})$ for a group $G$ generated by $S$.  
We will show that if the dimension of $Y \in \mathcal{Y}$ is uniformly bounded, then  $\kappa_S(\Gamma,\mathcal{Y})$ defines a lower semicontinuous function on the space of marked groups as stated in Proposition~\ref{prop:lower_semiconti_dim} below.  

Recall that the space $\mathcal{G}(k)$ of $k$-{\it marked groups} over $S=\{{s_1}^{\pm 1}, \dots, {s_k}^{\pm 1}\}$ consists of groups obtained as quotients of the free group $F_k$ of rank $k$; $\mathcal{G}(k)$ is identified with the set of normal subgroups of $F_k$.  There is a natural topology on $\mathcal{G}(k)$ relevant to Chabauty's work \cite{chabauty} and effectively used by Grigorchuk in \cite{grigorchuk}.  Following \cite{champetier} (see also \cite{stalder}) it can be described as follows: 
For $G=F_K/N$ and $G'=F_k/N'$ in $\mathcal{G}(k)$, where $N$ and $N'$ are normal subgroups of $F_k$, we set
$$
d(G, G')=\exp (-\max \{r \in \N \mid N\cap B(e,r) = N'\cap B(e,r)\}),
$$
where $B(e,r)$ denotes the ball centered at the identity element $e\in F_k$ with radius $r$ measured by the word metric on $F_k$ with respect to $S$.  Equipped with $d$ above, $\mathcal{G}(k)$ becomes a compact and separable ultrametric space; $(G_m=F_k/N_m) \to (G=F_k/N)$ means that, for any $w \in N$ (resp.~$w \not\in N$), there exists $m_0 \in \N$ such that $w \in N_m$ (resp.~$w \not\in N_m$) for $m\geq m_0$.  

\begin{lem}
\label{lem:limit_in_marked_gr}
 Let $\{G_m\}_{m \in \N}\subset \mathcal{G}(k)$, $\{Y_m\}_{m\in \N}$ and $\{\rho_m\colon G_m \rightarrow \isom{Y_m}\}_{m \in \N}$ be sequences of marked groups, metric spaces and homomorphisms respectively.  
 Suppose that $G_m\to G$ in $\mathcal{G}(k)$ and that there exist a positive real number $\delta$ and a point $p_m \in Y_m$ such that $\max_{s\in S}d_{Y_m}(p_m, \rho_m(s)p_m)$ is uniformly bounded above by $\delta$. 
 Then there is a well-defined homomorphism $\rho\colon G\rightarrow \isom{Y}$ with $\delta(\rho)\leq \delta$, where $Y=(Y,p)=\omega\text{-}\lim (Y_m,p_m)$. 
\end{lem}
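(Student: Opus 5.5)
The plan is to define $\rho$ on the free group first and then check that it descends to $G$. Write $F_k$ for the free group on $s_1,\dots,s_k$, and let $\pi_m\colon F_k\to G_m=F_k/N_m$ and $\pi\colon F_k\to G=F_k/N$ be the quotient maps. For $w\in F_k$, let $\rho_m(w)$ denote $\rho_m(\pi_m(w))$. For a word $w$ of length $|w|$ in $S$, the triangle inequality together with the isometric invariance of the metrics gives
\[
d_{Y_m}(p_m,\rho_m(w)p_m)\le |w|\,\delta .
\]
Hence for every sequence $x_m\in Y_m$ with $d(x_m,p_m)$ bounded, the sequence $\rho_m(w)x_m$ stays at bounded distance from $p_m$. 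We can therefore set
\[
\tilde\rho(w)\bigl(\omega\text{-}\lim x_m\bigr):=\omega\text{-}\lim \rho_m(w)x_m .
\]
This is well defined on equivalence classes, because each $\rho_m(w)$ is an isometry and so preserves distances between representative sequences. It is also distance preserving on $Y$. Since $\rho_m$ is a homomorphism for each $m$, we get $\tilde\rho(w w')=\tilde\rho(w)\tilde\rho(w')$ and $\tilde\rho(e)=\mathrm{id}$. In particular each $\tilde\rho(w)$ has inverse $\tilde\rho(w^{-1})$, so $\tilde\rho\colon F_k\to \isom{Y}$ is a homomorphism.

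The key step, and the only place where convergence in $\mathcal{G}(k)$ is used, is to show that $N\subseteq\ker\tilde\rho$. Take $w\in N$. By the description of convergence $G_m\to G$, there is $m_0$ with $w\in N_m$ for all $m\ge m_0$. For those $m$ the element $\rho_m(w)$ is the identity of $Y_m$. The set $\{m\ge m_0\}$ is cofinite, so it has $\omega$-measure one because $\omega$ is non-principal. Therefore $\tilde\rho(w)$ acts trivially on every point of $Y$, and $\tilde\rho$ factors through a homomorphism $\rho\colon G\to\isom{Y}$. Notice that only the inclusion ``$w\in N\Rightarrow w\in N_m$ eventually'' is needed; the converse half of the convergence is irrelevant here.

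It remains to bound the displacement. For the point $p=\omega\text{-}\lim p_m$ and each $s\in S$,
\[
d(p,\rho(s)p)=\omega\text{-}\lim d_{Y_m}(p_m,\rho_m(s)p_m)\le\delta .
\]
Taking the maximum over the finite set $S$ gives $\max_{s\in S}d(p,\rho(s)p)\le\delta$, and so $\delta(\rho)\le\delta$. The descent through $N$ is the main point of the argument, but it is short once one uses that cofinite sets have full $\omega$-measure. The rest consists of the routine well-definedness checks already carried out in the proof of Lemma \ref{lem:ultralimit}.
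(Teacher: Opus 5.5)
Your proposal is correct and follows the same route as the paper: lift to $F_k$, use the uniform displacement bound to define $\tilde\rho$ on the ultralimit, and descend to $G$ because each $w\in N$ lies in $N_m$ for a cofinite (hence $\omega$-full) set of indices. You additionally write out the routine details the paper leaves implicit, namely the $|w|\,\delta$ displacement bound, well-definedness on equivalence classes, and the estimate $d(p,\rho(s)p)\le\delta$.
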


\begin{proof}
 Set $G_m=F_k/N_m$ and $G=F_k/N$, where $N_m$ and $N$ are normal subgroups of $F_k$. 
 Let $\tilde \rho_m \colon F_k \rightarrow \isom{Y_m}$ be the lift of $\rho_m$ to $F_k$. 
 Since $\max_{s\in S}d(p_m,\rho_m(s)p_m)$ is uniformly bounded by $\delta$, we obtain a well-defined homomorphism $\tilde \rho\colon F_k \rightarrow \isom{Y}$ with $\delta(\tilde \rho)\leq \delta$.

 Take any $w \in N$.  Then there exists $m_0 \in \N$ such that, for any $m\geq m_0$, $w \in N_m$, which means that $\tilde \rho_m(w)$ is the identity element in $\isom{Y_m}$.  
 Thus $\tilde \rho(w)$ must be the identity element in $\isom{Y}$.  Since this is true for any $w \in \N$, we see that $\tilde \rho$ descends to $G=F_k/N$, and gives rise to a homomorphism $\rho\colon G \rightarrow \isom{Y}$ with $\delta(\rho)\leq \delta$. 
\end{proof}

It is known that marked groups with Kazhdan's property (T) form an open subset of $\mathcal{G}(k)$ (\cite[Theorem 6.7]{shalom}, see also \cite[Theorem 4.3]{stalder}).  It is easy to see that the same is true for groups with the fixed-point property for $\R^n$. 

\begin{prop}
\label{prop:fp(R^n)_open}
 Fix a positive integer $n$.  Then the set of groups with the fixed-point property for $\R^n$ is open in $\mathcal{G}(k)$.  
\end{prop}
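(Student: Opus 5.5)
We show the complement is closed. Suppose $G_m=F_k/N_m\to G=F_k/N$ in $\mathcal{G}(k)$, where no $G_m$ has the fixed-point property for $\R^n$. We then produce a fixed-point-free isometric action of $G$ on $\R^n$. This shows that any limit of groups without the property also lacks it, so the set of groups with the fixed-point property for $\R^n$ is open.

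First we normalize the actions. For each $m$, pick a homomorphism $\rho_m\colon G_m\to\isom{\R^n}$ without a global fixed point. By Breuillard--Fujiwara \cite[Prop.~1.10]{breuillard-fujiwara}, a set of Euclidean isometries with almost fixed points has a common fixed point. Hence $\delta_m:=\delta(\rho_m)>0$ with respect to $S$. A homothety of $\R^n$ conjugates $\rho_m$ to a homothetic copy $(1/\delta_m)\R^n$, which is again isometric to $\R^n$. We may therefore assume $\delta(\rho_m)=1$ for all $m$. Then we choose $p_m\in\R^n$ with $\max_{s\in S}d(p_m,\rho_m(s)p_m)\le 2$.

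Next we pass to the limit. Form $(Y,p)=\omega\text{-}\lim(\R^n,p_m)$. Each pointed space is isometric to $(\R^n,0)$, and an ultralimit of a constant proper pointed space is that space; concretely, bounded sequences in $\R^n$ have $\omega$-limits. Hence $Y\cong\R^n$. Lemma~\ref{lem:limit_in_marked_gr}, applied with the uniform bound $2$, yields a well-defined homomorphism $\rho\colon G\to\isom{Y}$. The key point is that the limiting action is defined through $F_k$ and descends to $G$, because each relator $w\in N$ lies in $N_m$ for all large $m$.

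It remains to check that $\rho(G)$ has no fixed point. This lower bound is not supplied by Lemma~\ref{lem:limit_in_marked_gr}, which gives only an upper bound, so we argue as in Lemma~\ref{lem:ultralimit}. Any $q\in Y$ is represented as $q=\omega\text{-}\lim q_m$ with $q_m\in\R^n$ and $d(q_m,p_m)$ bounded. For each $s\in S$, the isometry $\rho(s)$ is induced by $\rho_m(s)$, so
\[
\max_{s\in S}d(q,\rho(s)q)=\omega\text{-}\lim\max_{s\in S}d(q_m,\rho_m(s)q_m)\ge 1 .
\]
Here we used that $S$ is finite, so the maximum commutes with the $\omega$-limit. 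Thus $\delta(\rho)\ge1$, and in particular $\rho(G)$ has no global fixed point on $Y\cong\R^n$. This contradicts the assumption that $G$ has the fixed-point property for $\R^n$.

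The only genuinely nontrivial input is the positivity $\delta(\rho_m)>0$, which is what permits the rescaling normalization. This is exactly the Euclidean statement of Breuillard--Fujiwara, namely that almost fixed points imply a fixed point in $\R^n$, so we expect no real obstacle beyond invoking it.
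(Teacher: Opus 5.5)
Your proof is correct and follows the paper's argument essentially step for step. You rescale each fixed-point-free action on $\R^n$ to $\delta(\rho_m)=1$, pass to the ultralimit (which is again $\R^n$), descend to $G$ via Lemma~\ref{lem:limit_in_marked_gr}, and read off $\delta(\rho)\ge 1$ from the definition of the ultralimit. The only differences are cosmetic (a displacement bound of $2$ instead of $(m+1)/m$) together with your explicit appeal to Breuillard--Fujiwara for $\delta(\rho_m)>0$, which the paper uses tacitly when it rescales.
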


\begin{proof}
 Suppose not.  Then there is a sequence $\{G_m\}$ of marked groups such that each $G_m$ does not have the fixed-point property for $\R^n$ and that $G_m \to G$ in $\mathcal{G}(k)$ for $G$ with the fixed-point property for $\R^n$. 
 Then, for each $m\in \N$, we can take a homomorphism $\rho_m \colon G_m \rightarrow \isom{\R^n}$ providing a fixed-point-free action on $\R^n$.  Since $\R^n$ admits scaling, we can scale $\R^n$ so that $\delta(\rho_m)=1$ and find a point $p_m \in \R^n$ so that $\max_{s\in S} d_{\R^n}(p_m,\rho_m(s)p_m)\leq (m+1)/m$.  Note that, since $\R^n$ is proper and $\{(\R^n,p_m)\}$ clearly converges to $(\R^n,p)$ in the Gromov-Hausdorff topology, $\omega\text{-}\lim (\R^n,p_m)$ is isometric to $\R^n$ itself (see \cite[5.51 Remark]{bridson-haefliger}). Since we are assuming $G_m \to G$, we have a homomorphism $\rho \colon G \rightarrow \isom{\R^n}$ by Lemma~\ref{lem:limit_in_marked_gr}.  
 By the definition of the ultralimit, it is clear that $\delta(\rho)=1$. 
 This contradicts our assumption that $G$ has the fixed-point property for $\R^n$. 
\end{proof}

\begin{rem}
  Although marked groups with Property (T) and marked groups with the fixed-point property for $\R^n$ form open subsets in $\mathcal{G}(k)$ respectively, it is not clear whether the same is true for marked groups with the fixed-point property for all finite-dimensional Euclidean spaces or not.   
\end{rem}

The following proposition asserts the lower semicontinuity of $\kappa_S(G,\mathcal{Y})$ on the open subset of $\mathcal{G}(k)$ consisting of groups with fixed-point property for $\R^n$. 

\begin{prop}
\label{prop:lower_semiconti_dim}
 Take a set of real numbers 
 $\mathcal{N}=\{N_{r,\varepsilon} \in \R \mid r>\varepsilon >0\}$. 
 Let $\mathcal{Y}=\mathcal{Y}(\mathcal{N},n)$ be a family of
 geodesically complete $\cat{0}$ spaces whose $(r,\varepsilon)$-capacity
 does not exceed
 $N_{r,\varepsilon} \in \mathcal{N}$ for all $r>\varepsilon> 0$ and
 whose dimension does not exceed $n$. 
 Let $G \in \mathcal{G}(k)$ be a marked group with the fixed-point
 property for $\R^n$.  
 Suppose that $\{G_m\}\subset \mathcal{G}(k)$ is a sequence converging to $G$ in $\mathcal{G}(k)$.
 Then we have 
 $\liminf_{m\to \infty} \kappa_S (G_m,\mathcal{Y})\geq \kappa_S (G,\mathcal{Y})$.    
\end{prop}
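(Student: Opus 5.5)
Argue by contradiction using Lemma~\ref{lem:limit_in_marked_gr} and the rescaling from the proof of Theorem~\ref{thm:almost_fix}. Since fixed-point property for $\R^n$ is open (Proposition~\ref{prop:fp(R^n)_open}), we may assume every $G_m$ has it. Then $\kappa_S(G_m,\mathcal{Y})>0$ is defined by Theorem~\ref{thm:almost_fix}(ii); interpret it as the infimum of $\delta(\rho)$ over fixed-point-free actions of $G_m$ on spaces in $\mathcal{Y}$. Suppose that along a subsequence $\kappa_S(G_m,\mathcal{Y})\to \kappa'<\kappa_S(G,\mathcal{Y})$. Choose $Y_m\in\mathcal{Y}$ and fixed-point-free actions $\rho_m\colon G_m\to\isom{Y_m}$ with $\delta_m:=\delta(\rho_m)\to\kappa'$, and points $p_m$ with $\max_{s\in S}d(p_m,\rho_m(s)p_m)\le \delta_m+1/m$.

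\textbf{Case $\kappa'>0$.} Take the ultralimit $(Y,p)=\omega\text{-}\lim(Y_m,p_m)$ without rescaling. By Lemma~\ref{lem:limit_in_marked_gr} we obtain an action $\rho\colon G\to\isom{Y}$ with $\delta(\rho)\le\kappa'$. We need $Y\in\mathcal{Y}$ and $\rho$ fixed-point free.

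\emph{Membership in $\mathcal{Y}$.} The capacity bound $n_{r,\varepsilon}(Y)\le N_{r,\varepsilon}$ passes to the limit by the argument of Proposition~\ref{prop:tits_boundary}(1), with no rescaling needed. The dimension bound $\dim Y\le n$ passes to ultralimits by \cite[Proposition 6]{izeki-karlsson}. Geodesic completeness passes because uniform bounded geometry makes the pointed spaces Gromov–Hausdorff precompact (Lemma~\ref{lem:ultralimit-GH}), and limits of geodesically complete spaces remain geodesically complete. Concretely, each segment $[p,q]$ is a limit of segments, which extend by length $1$ in $Y_m$; the extensions converge and remain local geodesics, hence geodesics by the CAT(0) property.

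\emph{Fixed-point freeness.} This is the main obstacle, since a limit of fixed-point-free actions could acquire a fixed point. Use the uniform gap: for every $q\in Y_m$ and every $m$, $\max_s d(q,\rho_m(s)q)\ge\delta_m$, and $\delta_m\to\kappa'>0$. Any $q=\omega\text{-}\lim q_m\in Y$ satisfies $\max_s d(q,\rho(s)q)=\omega\text{-}\lim\max_s d(q_m,\rho_m(s)q_m)\ge\kappa'$. Hence $\delta(\rho)=\kappa'>0$ and $\rho$ has no fixed point. The definition of $\kappa_S(G,\mathcal{Y})$ then gives $\kappa'\ge\kappa_S(G,\mathcal{Y})$, a contradiction.

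\textbf{Case $\kappa'=0$.} Rescale as in the second half of the proof of Theorem~\ref{thm:almost_fix}: take $\omega\text{-}\lim((1/\delta_m)Y_m,p_m)$. Lemma~\ref{lem:limit_in_marked_gr} again gives an action of $G$, now with $\delta\ge1$. By Proposition~\ref{prop:tits_boundary}, the limit has bounded geometry and compact Tits boundary, and its dimension is at most $n$. Proposition~\ref{prop:bdd_geom_almost_fix}(ii) applied to $G$ then gives $\delta=0$, a contradiction.

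Finally, check that Lemma~\ref{lem:limit_in_marked_gr} still yields a well-defined action of $G$ when $\delta_m\to\kappa'>0$ rather than uniformly $\le\delta$. This holds because the displacements $\delta_m+1/m$ are uniformly bounded.
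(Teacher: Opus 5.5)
Your proposal is correct and is essentially the paper's proof. The paper first rules out $\inf_m \kappa_S(G_m,\mathcal{Y})=0$ using the rescaled ultralimit, Proposition~\ref{prop:tits_boundary} and Proposition~\ref{prop:bdd_geom_almost_fix}; this is your case $\kappa'=0$. It then takes an unrescaled ultralimit via Lemma~\ref{lem:limit_in_marked_gr} to obtain an action of $G$ with $\delta = \liminf_m \kappa_S(G_m,\mathcal{Y})$; this is your case $\kappa'>0$. There you usefully spell out why the limit lies in $\mathcal{Y}$ and is fixed-point free, which the paper leaves implicit.

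One small repair is needed in the case $\kappa'=0$: re-choose the base points so that $\max_{s\in S} d(p_m,\rho_m(s)p_m)\le 2\delta_m$, as in the second half of the proof of Theorem~\ref{thm:almost_fix}. With your original $p_m$, the rescaled displacement is only bounded by $1+1/(m\delta_m)$, which need not stay bounded, and then Lemma~\ref{lem:limit_in_marked_gr} would not apply.
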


\begin{proof}
 First note that we may assume each $G_m$ has the fixed-point
 property for $\R^n$ by Proposition~\ref{prop:fp(R^n)_open}. 
 In particular, there is a constant $\kappa(G_m,\mathcal{Y})>0$ for
 each $G_m$ by Theorem~\ref{thm:almost_fix}.
 We first show that $\inf_{m \in \N}\kappa(G_m,\mathcal{Y})>0$. 
 
 Suppose that $\inf_{m \in \N} \kappa(G_m,\mathcal{Y})=0$.  
 Then, by passing to a subsequence if necessary, we can find a sequence
 of homomorphisms $\rho_m\colon G_m \rightarrow \isom{Y_m}$ with 
 $Y_m \in \mathcal{Y}$ and $\delta(\rho_m)\searrow 0$. 
 Let $p_m \in Y_m$ be a point satisfying
 $\max_{s\in S}d_{Y_m}(\rho_m(s)p_m,p_m)\leq 2\delta(\rho_m)$, and take
 the ultralimit $Y=\omega\text{-}\lim (\frac{1}{\delta(\rho_m)}Y_m,p_m)$. 
 Since $\rho_m$ induces $\bar \rho_m \colon G_m \rightarrow \isom{\frac{1}{\delta(\rho_m)}Y_m}$
 and each $\bar \rho_m$ satisfies $1 \leq \delta(\bar \rho_m) \leq 2$
 uniformly, we obtain a well-defined homomorphism $\rho\colon G \rightarrow \isom{Y}$ by Lemma~\ref{lem:limit_in_marked_gr}, and $1 \leq \delta(\rho) \leq 2$ holds. 
 Noting that $\delta(\rho_m)\searrow 0$ and that the ultralimit does not
 increase the dimension, we see that Proposition~\ref{prop:tits_boundary} implies that 
 $Y \in \mathcal{Y}$ and the compactness of the Tits boundary of $Y$. 
 However, this contradicts Proposition~\ref{prop:bdd_geom_almost_fix}. 
 Thus we conclude that $\inf_{m\in \N} \kappa_S (G_m, \mathcal{Y})>0$. 

 Let $\kappa = \liminf_{m \to \infty} \kappa_S(G_m,\mathcal{Y})$. 
 Take $Y_m' \in \mathcal{Y}$,  $\rho_m' \colon G_m \rightarrow \isom{Y_m'}$, and 
 $p_m' \in Y_m'$ so that $\kappa(G_m,\mathcal{Y})\leq \delta(\rho_m') \leq
 ((m+1)/m)\kappa_S(G_m,\mathcal{Y})$. 
 Passing to a subsequence if necessary, we may assume $\lim_{m\to \infty}\kappa_S(G_m,\mathcal{Y})=\kappa$. 
 Let $Y'=\omega\text{-}\lim (Y_m',p_m')$. 
 Then, since $\omega\text{-}\lim \delta(\rho_m)=\kappa$, we have a well-defined homomorphism 
 $\rho'\colon G \rightarrow \isom{Y'}$ with $\delta(\rho')=\kappa$. 
 By the definition of $\kappa_S(G,\mathcal{Y})$, we see that $\kappa \geq \kappa_S(G,\mathcal{Y})$. 
 The proof is complete. 
\end{proof}

For marked groups with the fixed-point property for all finite-dimensional Euclidean spaces, the same proof as above (with the first paragraph omitted) yields the following. 

\begin{prop}
\label{prop:lower_semiconti}
 Take a set of real numbers 
 $\mathcal{N}=\{N_{r,\varepsilon} \in \R \mid r>\varepsilon >0\}$. 
 Let $\mathcal{Y}=\mathcal{Y}(\mathcal{N},n)$ be a family of
 geodesically complete $\cat{0}$ spaces whose $(r,\varepsilon)$-capacity
 does not exceed
 $N_{r,\varepsilon} \in \mathcal{N}$ for all $r>\varepsilon> 0$. 
 Let $G \in \mathcal{G}(k)$ be a marked group with the fixed-point
 property for all finite-dimensional Euclidean spaces.  
 Suppose that $\{G_m\}_{m \in \N} \subset \mathcal{G}(k)$ 
 is a sequence of marked groups with the fixed-point property for all
 finite-dimensional Euclidean spaces such that $G_m\to G$ in $\mathcal{G}(k)$.
 Then we have 
 $\liminf_{m\to \infty} \kappa_S (G_m,\mathcal{Y})\geq 
 \kappa_S (G,\mathcal{Y})$.    
\end{prop}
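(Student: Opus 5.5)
We will follow the proof of Proposition~\ref{prop:lower_semiconti_dim}, omitting its first paragraph. The openness statement (Proposition~\ref{prop:fp(R^n)_open}) is not available here. This is why we assume that every $G_m$ already has the fixed-point property for all finite-dimensional Euclidean spaces. Theorem~\ref{thm:almost_fix}(i) then applies to each $G_m$ and to $G$ with the family $\mathcal{Y}$, so each constant $\kappa_S(G_m,\mathcal{Y})$ and $\kappa_S(G,\mathcal{Y})$ is positive. We first settle the convention that if a group has no fixed-point-free action on any member of $\mathcal{Y}$, its constant is $+\infty$; this case is handled trivially below.

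Set $\kappa=\liminf_{m}\kappa_S(G_m,\mathcal{Y})$. If $\kappa=\infty$ there is nothing to prove, so assume $\kappa<\infty$ and pass to a subsequence realizing the $\liminf$ as a limit. For each $m$ choose $Y_m\in\mathcal{Y}$, a fixed-point-free action $\rho_m\colon G_m\to\isom{Y_m}$ with $\delta(\rho_m)\le \tfrac{m+1}{m}\kappa_S(G_m,\mathcal{Y})$, and a point $p_m$ with $\max_{s}d(p_m,\rho_m(s)p_m)\le \delta(\rho_m)+1/m$. These displacements are uniformly bounded, so Lemma~\ref{lem:limit_in_marked_gr} gives a homomorphism $\rho\colon G\to\isom{Y}$ on the unscaled ultralimit $Y=\omega\text{-}\lim(Y_m,p_m)$ with $\delta(\rho)\le\kappa$.

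It remains to check that $\rho$ is a legitimate competitor, i.e.\ that it is fixed-point-free, that $\delta(\rho)\ge\kappa$, and that $Y\in\mathcal{Y}$ (or at least that $\kappa_S(G,\cdot)$ can be evaluated on it). For the lower bound on $\delta$, any $q=\omega\text{-}\lim q_m\in Y$ satisfies $\max_s d(q,\rho(s)q)=\omega\text{-}\lim\max_s d(q_m,\rho_m(s)q_m)\ge\omega\text{-}\lim\delta(\rho_m)$. Hence $\delta(\rho)\ge\kappa>0$, provided $\kappa>0$. In particular $\rho$ has no fixed point.

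For membership in $\mathcal{Y}$, an unscaled ultralimit of spaces with capacities bounded by $N_{r,\varepsilon}$ has capacities bounded by $N_{r',\varepsilon'}$ for slightly larger parameters, by the argument in Proposition~\ref{prop:tits_boundary}(1). Geodesic completeness passes to ultralimits of geodesically complete proper spaces with uniform bounds. If necessary, we will interpret $\mathcal{Y}$ as closed under such limits, as the paper implicitly does in Proposition~\ref{prop:lower_semiconti_dim}. Granting this, the definition of $\kappa_S(G,\mathcal{Y})$ gives $\kappa=\delta(\rho)\ge\kappa_S(G,\mathcal{Y})$.

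The main obstacle is ruling out $\kappa=0$, which in the $\R^n$ case was the omitted first paragraph. If $\kappa=0$, then $\delta(\rho_m)\to0$. Rescale by $1/\delta(\rho_m)$ as in that paragraph, giving $1\le\delta(\bar\rho_m)\le2$. Lemma~\ref{lem:limit_in_marked_gr} then yields an action of $G$ on the rescaled ultralimit with $\delta\ge1$. By Proposition~\ref{prop:tits_boundary}, this ultralimit has bounded geometry and compact Tits boundary. Since $G$ has the fixed-point property for all finite-dimensional Euclidean spaces, Proposition~\ref{prop:bdd_geom_almost_fix}(i) forces $\delta=0$, a contradiction. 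Since that proposition needs no dimension bound under hypothesis (i), the argument goes through verbatim. This gives $\liminf_m\kappa_S(G_m,\mathcal{Y})\ge\kappa_S(G,\mathcal{Y})$.
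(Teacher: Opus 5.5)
Your proposal is correct and follows the paper's proof. The paper likewise drops only the openness reduction (Proposition~\ref{prop:fp(R^n)_open}), rules out a vanishing $\liminf$ by the rescaled ultralimit combined with Propositions~\ref{prop:tits_boundary} and~\ref{prop:bdd_geom_almost_fix}(i), and then compares with the unscaled ultralimit action of $G$ obtained from Lemma~\ref{lem:limit_in_marked_gr}. Your reordering, the $+\infty$ convention, and the explicit check of $\delta(\rho)\ge\kappa$ are harmless refinements. One small sharpening: running the geodesic-extension argument of Proposition~\ref{prop:tits_boundary}(1) with $r_n\equiv 1$ bounds the capacities of the unscaled ultralimit by exactly $N_{r,\varepsilon}$, not by values at shifted parameters. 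Together with the passage of geodesic completeness to ultralimits, which you state correctly, this gives $Y\in\mathcal{Y}$ outright, so you do not need to enlarge $\mathcal{Y}$.
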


\section{A few consequences of the rigidity principle }\label{sec:rigidityconsequences}

\subsection{An example in linear representation theory}
Consider the case 
$$
Y=\mathrm{GL}_n(\mathbb{C})/\mathrm{U}_n(\mathbb{C})
$$ 
and $(\Gamma, S)$ with property $\mathrm F \mathbb R ^{<\infty}$. Any $n$-dimensional complex linear representation of $\Gamma$ induces an isometric action on $Y$. Fixing a point in $Y$ means exactly that it is a unitarizable representation. Theorem \ref{thmD} asserts in this setting that for any linear representation $\rho$ of $\Gamma$ that is not relatively compact, it holds that $\delta(\rho) \geq \kappa_n >0$. In particular, if a unitary representation $\rho_0$ of $\Gamma$ is deformed among the space of all linear representations to $\rho$, small enough such that $\delta(\rho)<\kappa_n$, then $\rho$ must be unitarizable. This is a weaker version of the local rigidity of unitary representations coming from Weil's cohomological criterion \cite{weil} as in \cite{rapinchuk} for property (T) groups. 

\subsection{Visibility space}

\label{ssec:visibility}
The results in Section 5, in particular  Theorem \ref{thmD}, are particularly useful when applied to group actions that are not necessarily properly discontinuous. In this section, we examine isometric actions of torsion groups on visibility spaces. We will deduce a result for torsion groups that complements \cite[Theorem 1.5]{ji-wu_tits_alt} since we do not assume that the action is properly discontinuous, on the other hand, we insist on geodesic completeness, which is not assumed in \cite{ji-wu_tits_alt}. That paper instead uses the Margulis lemma of Breuillard-Green-Tao \cite[Corollary 11.17]{breuillard-green-tao}.

Recall that a CAT(0) space $Y$ is called a \emph{visibility space} if  for every two distinct points in the geometric boundary $\partial Y$ of $Y$, there is a geodesic line whose endpoints are exactly these two points. Typical examples include CAT($-1$) spaces, and more generally Gromov hyperbolic CAT(0) spaces. One may also see \cite{ji-wu-invent} for visibility manifolds with finite-volume quotients of bounded curvatures that are not Gromov hyperbolic. For further background on this subject, we refer to \cite[Chapter II.9]{bridson-haefliger}, \cite{eberlein-oneil} and \cite{ji-wu-invent}. 

Visibility spaces, which is a weakening of hyperbolicity, have discrete Tits boundary, thus this is the opposite case to the case of compact Tits boundary.

\begin{prop}[{\cite[Theorem 3.5]{ji-wu_tits_alt}}]
\label{prop:visibility_fixed_point}
 Let $X$ be a proper visibility $\cat{0}$ space and $\Gamma$ an 
 infinite torsion group. 
 Suppose that $\Gamma$ acts on $X$ with an unbounded orbit. 
 Then there is a point $\xi\in \partial X$ such that any convergent
 unbounded sequence $g_nx_0$ in $\bar X$ converges to $\xi$. 
\end{prop}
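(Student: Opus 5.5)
The plan is to use the torsion hypothesis twice: first to build a limiting boundary point from escaping orbit points, and then to show that every other escaping subsequence lands on that same point.

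\emph{Step 1: a candidate point.} Fix $x_0\in X$. Since the orbit $\Gamma x_0$ is unbounded and $\bar X=X\cup\partial X$ is compact ($X$ is proper), there is a sequence $g_n$ with $d(x_0,g_nx_0)\to\infty$ and $g_nx_0\to\xi$ for some $\xi\in\partial X$. It remains to show that any other convergent unbounded sequence $h_nx_0\to\eta\in\partial X$ has $\eta=\xi$.

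\emph{Step 2: torsion elements are elliptic.} For a single torsion element $g$ of order $k$, the orbit $\{x_0,gx_0,\dots,g^{k-1}x_0\}$ has a circumcenter fixed by $g$. Hence every element of $\Gamma$ is elliptic, with fixed point set $\mathrm{Fix}(g)$ a nonempty closed convex set. There is no uniform bound on $d(x_0,\mathrm{Fix}(g))$ across $\Gamma$.

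\emph{Step 3: argue by contradiction using visibility.} Suppose $\eta\neq\xi$. Visibility gives a geodesic line $\ell$ with endpoints $\xi$ and $\eta$; let $x_1$ be a point on $\ell$, which lies at a definite distance from $x_0$. The key step is to consider the elements $h_m^{-1}g_n$, or more symmetrically $g_nh_m^{-1}$. These are elliptic, so each such $\gamma=g_nh_m^{-1}$ fixes some point $z$. Then $d(z,h_mx_0)=d(z,g_nx_0)$, since $\gamma$ maps $h_mx_0$ to $g_nx_0$. So $z$ lies on the bisector of $h_mx_0$ and $g_nx_0$.

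In a visibility space, when $h_mx_0\to\eta$ and $g_nx_0\to\xi$ with $\xi\neq\eta$, the geodesic $[h_mx_0,g_nx_0]$ passes within a bounded distance $R$ of $x_1$; this is the standard compactness characterization of visibility. Its midpoint region is controlled, but the bisector itself could still escape to infinity. To handle this, I would instead use the \emph{orbit} of the elliptic element. Choose $m,n$ so that the distances $d(x_0,g_nx_0)$ and $d(x_0,h_mx_0)$ are comparable. The element $\gamma=g_nh_m^{-1}$ moves $h_mx_0$ to $g_nx_0$, which are far apart with the geodesic between them passing near $x_1$. Since $\gamma$ is elliptic with fixed point $z$, the displacement function $d(\cdot,\gamma\cdot)$ is convex and vanishes at $z$.

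The cleaner route is to apply the displacement-at-infinity argument directly. Pass to a limit in $\mathrm{Isom}(X)$, or use Busemann functions: compare $b_\xi$ along the finite orbit of $\gamma$, which must sum consistently around a closed cycle. Concretely, since $\gamma$ has finite order $k$, the polygon $h_mx_0,\gamma h_mx_0,\gamma^2 h_mx_0,\dots$ closes up. The first edge has endpoints near $\eta$ and $\xi$ and passes near $x_1$. By visibility and a diagonal choice of $(m,n)$, I aim to force all subsequent vertices $\gamma^jh_mx_0$ to lie near $\xi$ as well. If that holds, the last vertex $\gamma^{k}h_mx_0=h_mx_0$ would be near $\xi$, contradicting $h_mx_0\to\eta\neq\xi$.

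\emph{Main obstacle.} The hard step is controlling where $\gamma^j h_mx_0$ goes for $j\geq 2$. This requires choosing the sequences so that $\gamma$ acts on $\bar X$ with north–south-type behavior, mapping a neighborhood of $\eta$ toward $\xi$, and that is exactly what unbounded orbits in visibility spaces provide via a convergence-group style argument. I would first try to prove that $\Gamma$ acts on $\partial X$ as a convergence group, in the sense that sequences $g_n$ with $g_nx_0\to\xi$ and $g_n^{-1}x_0\to\zeta$ satisfy $g_n y\to\xi$ for all $y\in\bar X\setminus\{\zeta\}$. Visibility gives this through the standard "geodesics between separated boundary neighborhoods pass through a compact set" lemma. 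Combined with ellipticity of every element, this rules out two distinct attracting points, much as a torsion convergence group cannot contain loxodromic-type dynamics.
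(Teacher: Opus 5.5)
The paper does not prove this statement; it quotes it from Ji--Wu, so your argument has to stand on its own. The framework you end up with is the right one: a convergence property for the action on $\bar X$ coming from the visibility lemma, followed by a contradiction with finite order via an orbit that cannot close up. But the step where torsion is actually used is only asserted (``combined with ellipticity \dots\ this rules out two distinct attracting points''), and the concrete construction you commit to does not work as stated.

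For $\gamma=g_nh_m^{-1}$ to map a neighbourhood $U$ of $\xi$ into itself, $h_m^{-1}$ must send $\bar U$ into a region that $g_n$ then contracts into $U$. After passing to subsequences, the convergence property says the following. $h_m^{-1}$ pushes $\bar U$, which avoids $\eta=\lim h_mx_0$, towards $\eta':=\lim h_m^{-1}x_0$. On the other hand, $g_n$ contracts into $U$ only compact sets avoiding $\xi':=\lim g_n^{-1}x_0$. Nothing in your proposal excludes $\eta'=\xi'$, and in that case no diagonal choice of $(m,n)$ helps. For example, take $g_n=a^n$ and $h_m=ba^m$ with $a$ loxodromic: then $\xi'=\eta'=a^-$, and $\gamma=a^{n-m}b^{-1}$, which for $n=m$ is just $b^{-1}$. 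In addition, the pointwise statement $g_ny\to\xi$ is not enough. You need $g_n(K)\subset U$ for large $n$, uniformly over compact $K\subset\bar X\setminus\{\zeta\}$; the visibility lemma does give this.

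The missing idea is to apply torsion first to a single escaping sequence. Suppose $g_nx_0\to\xi$ and $g_n^{-1}x_0\to\xi'\neq\xi$. Choose an open neighbourhood $W$ of $\xi'$ and a neighbourhood $U$ of $\xi$ with $\bar U\cap W=\emptyset$ and $x_0\notin \bar U\cup W$. Uniform convergence on the compact set $\bar X\setminus W$ gives $g_n(\bar U\cup\{x_0\})\subset U$ for large $n$. Hence $g_n^jx_0\in U$ for every $j\geq 1$, which is impossible when $g_n$ has finite order (take $j=\mathrm{ord}(g_n)$). So $\xi'=\xi$, and likewise $\eta'=\eta$, whence $\xi'\neq\eta'$.

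Now your element works. Fix disjoint closed neighbourhoods $\bar U\ni\xi$ and $\bar V\ni\eta$ not containing $x_0$. Take $m$ large so that $h_m^{-1}(\bar U\cup\{x_0\})\subset V$, then $n$ large so that $g_n(\bar V)\subset U$. Then $\gamma=g_nh_m^{-1}$ satisfies $\gamma(\bar U)\subset U$ and $\gamma x_0\in U$. So again $\gamma^jx_0\in U$ for all $j\geq 1$, contradicting $\gamma^{\mathrm{ord}(\gamma)}=e$ since $x_0\notin U$. With these two steps supplied, and the bisector and circumcentre digressions dropped, your outline becomes a complete proof.
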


\begin{prop}
\label{prop:visibility_torsion_drift0}
 Let $X$ be a proper visibility $\cat{0}$ space and $\Gamma$ a
 countable torsion group equipped with 
 a probability measure $\mu$ with finite first moment.
 Suppose that $\Gamma$ acts on $X$ via a homomorphism
 $\rho\colon \Gamma \rightarrow \isom{X}$ with an unbounded orbit. 
 Then there is a point $\xi\in \partial X$ fixed by $\rho(\Gamma)$, and 
 the drift $l_{\mu}(\rho)$ with respect to $\mu$ is equal to zero.
\end{prop}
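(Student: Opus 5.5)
First I would produce the fixed point at infinity, and then derive the vanishing of the drift from it. For the first part, $\Gamma$ is countable and torsion, and it acts with an unbounded orbit, so $\Gamma$ must be infinite: a finite group has bounded orbits. Proposition~\ref{prop:visibility_fixed_point} then gives a point $\xi\in\partial X$ such that every convergent unbounded sequence $\rho(g_n)x_0$ in $\bar X$ converges to $\xi$.

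To see that $\xi$ is fixed, take any $h\in\Gamma$. Since $X$ is proper, $\bar X$ is compact, so we can choose an unbounded sequence $\rho(g_n)x_0\to\xi$. Then $\rho(hg_n)x_0=\rho(h)\rho(g_n)x_0$ is also unbounded, and it converges to $\rho(h)\xi$ because $\rho(h)$ extends to a homeomorphism of $\bar X$. By the uniqueness statement of Proposition~\ref{prop:visibility_fixed_point}, $\rho(h)\xi=\xi$. Hence $\rho(\Gamma)$ fixes $\xi$.

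For the drift, I would argue by contradiction and suppose $l_\mu(\rho)>0$. The Karlsson--Margulis theorem \cite{karlsson-margulis} applies because $\mu$ has finite first moment. It says that for almost every sample path $Z_n=X_1\cdots X_n$, the points $\rho(Z_n)x_0$ are unbounded and converge to some point $\eta\in\partial X$, along a geodesic ray, at linear speed $l_\mu(\rho)$. By Proposition~\ref{prop:visibility_fixed_point}, this limit must be $\eta=\xi$ almost surely. Using the argument already recalled in the proof of Proposition~\ref{prop:bdd_geom_almost_fix} (\cite[Proposition 4.2]{karlsson-linear}), the Busemann function then satisfies
\[
b_\xi(\rho(Z_n)x_0,x_0)\to-\infty .
\]

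To reach a contradiction, I would use that $\Gamma$ fixes $\xi$. This gives a Busemann character $\beta(g)=b_\xi(\rho(g)x_0,x_0)$, which is a homomorphism $\Gamma\to\R$ because the Busemann cocycle is additive for isometries fixing $\xi$. Every element of $\Gamma$ has finite order, so $\beta(g)=\beta(g^{\mathrm{ord}(g)})/\mathrm{ord}(g)=0$, and $\beta$ vanishes identically. Consequently $b_\xi(\rho(Z_n)x_0,x_0)=0$ for every $n$, which contradicts divergence to $-\infty$. Therefore $l_\mu(\rho)=0$.

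The step I expect to be the main obstacle is the Busemann step: showing that positive drift together with convergence to $\xi$ forces $b_\xi\to-\infty$. This is the standard Karlsson--Ledrappier/Karlsson--Margulis fact for sequences tracking a ray at positive linear speed. If finite first moment causes technical trouble there, a fallback is Lemma~\ref{lem:block-swap}, but that lemma needs bounded exponent and finite support. For that reason I would rely on the Busemann argument.
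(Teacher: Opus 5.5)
Your proposal is correct and follows the paper's proof essentially step for step. You fix $\xi$ via the uniqueness in Proposition~\ref{prop:visibility_fixed_point} applied to $\rho(hg_n)x_0$, then rule out positive drift because the Busemann character vanishes, while Karlsson--Margulis/Karlsson--Ledrappier would force $b_\xi(\rho(Z_n)x_0,x_0)\to-\infty$. Your extra details only make explicit what the paper leaves implicit: that $\Gamma$ is infinite, that the random-walk limit point equals $\xi$, and that the Busemann character is a homomorphism killed by torsion.
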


\begin{proof}
 Fix $x_0 \in X$. By Proposition~\ref{prop:visibility_fixed_point}, every unbounded
 convergent sequence $\{g_n\circ x_0\}$ in $\bar X$  with $g_n \in \rho(\Gamma)$ converges to a unique point 
 $\xi \in \partial X$.
Taking any $h \in \rho(\Gamma)$, the sequence $\{(h g_n)\circ x_0\}$ is also an unbounded convergent sequence in 
 $\bar X$, and thus converges to $\xi$.  Since $(h g_n) \circ x_0=h(g_n \circ x_0) \to h (\xi)$, we have $h (\xi)=\xi$; therefore, $\xi$ is fixed by $\rho(\Gamma)$.  

 Since $\Gamma$ does not admit nontrivial homomorphism into $\R$, 
 $\rho(\Gamma)$ preserves every horosphere centered at $\xi$. In particular, $g_n x_0$ approaches $\xi$ along the horosphere centered at
 $\xi$ passing through $x_0$ and the Busemann function $b_{\xi}(g_n x_0, x_0)$ is identically zero. 
 However, as mentioned in the proof of
 Theorem~\ref{thm:almost_fix}
 (see \cite[Proposition 4.2]{karlsson-linear} and  
 \cite[\S 3]{karlsson-ledrappier}), if $l_{\mu}(\rho)>0$, it would follow that for a random walk path $g_n$, $b_{\xi}(g_nx_0,x_0)\to -\infty$ as $n\to \infty$, a contradiction. Therefore, $l_{\mu}(\rho)$ must vanish. 
\end{proof}

\begin{thm}
 \label{thm:visibility_fixed_pt}
 Let $Y$ be a geodesically complete visibility space
 with bounded geometry, and let $\Gamma$ be a finitely generated torsion
 group. Whenever $\Gamma$ acts isometrically on $Y$, there is a global fixed point in $Y$.
 
\end{thm}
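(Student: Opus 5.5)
We argue by contradiction, combining Proposition~\ref{prop:visibility_torsion_drift0} with the Kazhdan-type rigidity principle, Theorem~\ref{thm:almost_fix}, applied to the single space $\mathcal{Y}=\{Y\}$. A finitely generated torsion group has property $\mathrm F\mathbb R^{<\infty}$. Indeed, the image of $\Gamma$ in $\isom{\R^n}$ is a finitely generated linear torsion group, hence finite by Schur's theorem, and then Cartan's theorem (barycenter of an orbit) gives a fixed point. Thus hypothesis (i) of Theorem~\ref{thm:almost_fix} holds. Since $Y$ is geodesically complete with bounded geometry, every fixed-point-free action $\rho$ of $\Gamma$ on $Y$ satisfies $\delta(\rho)\ge\kappa(S,\{Y\})>0$. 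It therefore suffices to show that an action of $\Gamma$ on $Y$ always has $\delta(\rho)=0$.

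Suppose $\rho$ has no fixed point in $Y$. Since $Y$ has bounded geometry it is proper, so by Cartan's theorem the orbits of $\rho(\Gamma)$ are unbounded. Hence $\Gamma$ is infinite, and Proposition~\ref{prop:visibility_torsion_drift0} applies. Take $\mu$ to be the uniform symmetric measure on a finite symmetric generating set $S$; it has finite second moment. The proposition then gives a point $\xi\in\partial Y$ fixed by $\rho(\Gamma)$ and $l_\mu(\rho)=0$.

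Now apply Proposition~\ref{lem:strongly_fixed}. Its hypotheses are the fixed-point property for all finite-dimensional Euclidean spaces (established above), bounded geometry of $Y$, and zero drift for a symmetric generating measure with finite second moment. It gives $\delta(\rho)=0$. This contradicts $\delta(\rho)\ge\kappa>0$ from the first paragraph, so $\rho(\Gamma)$ has a global fixed point.

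The main subtlety is checking that the inputs genuinely match. Proposition~\ref{prop:visibility_torsion_drift0} needs properness of $Y$ and an unbounded orbit, both supplied above. The rigidity theorem needs geodesic completeness, which is exactly why it appears in the statement. Note that no exponent bound is required: the block-swap lemma is replaced here by the visibility argument for zero drift, and the passage from almost fixed points to an actual fixed point goes through Theorem~\ref{thm:almost_fix} rather than a Margulis lemma, so properness of the action is not needed.
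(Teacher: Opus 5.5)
Your proposal is correct and follows essentially the same route as the paper. Both arguments go: unbounded orbit, then Proposition~\ref{prop:visibility_torsion_drift0} (zero drift for a finitely supported symmetric measure), then Schur's theorem for property $\mathrm F\mathbb R^{<\infty}$, then Proposition~\ref{lem:strongly_fixed} (almost fixed points), then Theorem~\ref{thm:almost_fix} to get a genuine fixed point; the only difference is that you phrase the last step as a contradiction with $\kappa(S,\{Y\})>0$ rather than arguing directly.
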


\begin{proof}
If the orbit is bounded, Cartan’s fixed-point theorem gives a global fixed point. Assume therefore that the orbit is unbounded.
 Since $\Gamma$ is finitely generated, it admits a symmetric probability
 measure $\mu$ supported on a finite generating set. 
 In particular, $\mu$ has finite first and second moments. 
 By Proposition~\ref{prop:visibility_torsion_drift0}, we see that
 $\rho(\Gamma)$ fixes a point in $\partial Y$ and that the drift
 $l_{\mu}(\rho)$ of $\rho(\Gamma)$ with respect to $\mu$ is 
 zero. By Schur's theorem, $\Gamma$ has the fixed-point property for all finite-dimensional Euclidean spaces. 
This, combined with Proposition~\ref{lem:strongly_fixed}, implies that $\rho(\Gamma)$ has
 almost fixed points. Finally, since $Y$ has bounded
 geometry, we can apply Theorem~\ref{thm:almost_fix}, and conclude that
 $\rho(\Gamma)$ has a fixed point in $Y$. 
\end{proof}

\subsection{Compact Tits boundary}
The following is in a sense the opposite case to Theorem \ref{thm:visibility_fixed_pt}:

\begin{cor} \label{cor:Titscompacttorsion}
    Let $Y$ be a geodesically complete $\cat{0}$ space of bounded geometry, and 
    with compact Tits boundary $\partial_T Y$. Any isometric action by a finitely generated torsion group must have a global fixed point in $Y$.
\end{cor}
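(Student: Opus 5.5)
The plan is to combine Proposition~\ref{prop:bdd_geom_almost_fix} with Theorem~\ref{thm:almost_fix}, applied to the single-space family $\mathcal{Y}=\{Y\}$. The key group-theoretic input is Schur's theorem. A finitely generated torsion group $\Gamma$ has every finite-dimensional real linear image finite, and $\isom{\R^n}$ is linear. Hence every isometric action of $\Gamma$ on $\R^n$ factors through a finite group, has bounded orbits, and so fixes a point (the circumcenter of an orbit). Thus $\Gamma$ has property $\mathrm F\mathbb R^{<\infty}$, i.e.\ hypothesis (i) in both results. In particular $\Gamma$ has no nontrivial homomorphism to $\R$, which is the hypothesis used inside the proof of Proposition~\ref{prop:bdd_geom_almost_fix}.

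Let $\rho:\Gamma\to\isom{Y}$ be an action and $S$ a finite generating set. Since $Y$ is complete, of bounded geometry and has compact Tits boundary $\partial_T Y$, Proposition~\ref{prop:bdd_geom_almost_fix} (case (i)) gives $\delta(\rho)=0$. Its proof runs as follows. Compactness of $\isom{\partial_T Y}$ together with Karlsson--Margulis shows that positive drift would force a fixed point $\xi$ at infinity. Because there are no characters to $\R$, horospheres centered at $\xi$ are invariant, while positive drift would push the Busemann function to $-\infty$. This contradiction gives zero drift, and Proposition~\ref{lem:strongly_fixed} then yields almost fixed points. Next, $Y$ is geodesically complete with bounded geometry, so $\{Y\}$ is a family of geodesically complete $\cat{0}$ spaces with uniformly bounded geometry. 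Theorem~\ref{thm:almost_fix} then says that any fixed-point-free action of $\Gamma$ on $Y$ has $\delta(\rho)\geq\kappa(S,\{Y\})>0$. This contradicts $\delta(\rho)=0$, so $\rho(\Gamma)$ has a global fixed point in $Y$.

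The main point to check is that hypothesis (i) really holds, i.e.\ that Schur's theorem applies to the image in $\isom{\R^n}\subset \mathrm{GL}_{n+1}(\R)$. This is needed both for $\mathrm F\mathbb R^{<\infty}$ and for the absence of homomorphisms to $\R$; the latter is immediate anyway, since $\R$ is torsion-free. Note that, unlike Theorem~\ref{thm: torsion-intro}, no bounded exponent is required here. The block-swap lemma is not used: zero drift comes instead from compactness of the Tits boundary through Proposition~\ref{prop:bdd_geom_almost_fix}. The remaining steps are direct citations.
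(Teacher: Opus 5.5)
Your proposal is correct and follows essentially the same route as the paper. The paper's proof is: Schur's theorem gives the fixed-point property for all finite-dimensional Euclidean spaces; Proposition~\ref{prop:bdd_geom_almost_fix}(i) then gives $\delta(\rho)=0$; and Theorem~\ref{thm:almost_fix}, applied to the single-space family $\{Y\}$, upgrades these almost fixed points to a global fixed point.
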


\begin{proof}
    Recall that every finitely generated torsion group has the fixed-point property for all finite-dimensional Euclidean spaces. By Proposition \ref{prop:bdd_geom_almost_fix}, any isometric action of such a group on $Y$ has almost fixed points. Theorem \ref{thm:almost_fix} then implies that the action has a global fixed point in $Y$.
\end{proof}

\subsection{Dichotomy for the drift of random walks}

An immediate application of section 5 and Theorem \ref{thmD} is the following:
 
\begin{thm}\emph{(Drift dichotomy)}\label{thm:F}
    Let $\Gamma$ be a finitely generated group with
 fixed-point property for $\mathbb R^n$. Let $\rho:\Gamma \rightarrow \isom{Y}$ be an isometric action on an
$n$-dimensional geodesically complete $\cat{0}$ space $Y$ of bounded geometry. Then exactly one of the following holds:
 \begin{itemize}
  \item[{\rm (1)}]  for every symmetric probability measure with finite second moment whose support generates $\Gamma$, the drift $l_{\mu}(\rho)>0$;
  \item[{\rm (2)}] $\rho(\Gamma)$ has a global fixed point in $Y$.
 \end{itemize}
 
\end{thm}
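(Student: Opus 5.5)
The plan is to combine Proposition~\ref{lem:strongly_fixed_dim} with Theorem~\ref{thm:almost_fix}. First, the two alternatives are mutually exclusive. If $\rho(\Gamma)$ fixes a point $y\in Y$, then the orbit $\rho(\Gamma)y=\{y\}$ is bounded. Hence $\int_\Gamma d(y,\rho(\gamma)y)\,d\mu^{*n}(\gamma)=0$ for every $n$, so $l_\mu(\rho)=0$ for every $\mu$. Alternative (1) therefore fails whenever (2) holds; note that $\Gamma$ is finitely generated, so at least one symmetric generating $\mu$ with finite second moment exists.

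It remains to show that if (1) fails then (2) holds. Suppose some symmetric probability measure $\mu$ with finite second moment, whose support generates $\Gamma$, satisfies $l_\mu(\rho)=0$. The space $Y$ is a complete $n$-dimensional CAT(0) space and $\Gamma$ has property $\mathrm F\mathbb R^n$. Proposition~\ref{lem:strongly_fixed_dim} then gives $\delta(\rho)=0$, i.e.\ $\rho(\Gamma)$ has almost fixed points. (Alternatively, since $Y$ has bounded geometry, one could try Proposition~\ref{lem:strongly_fixed}, but that requires $\mathrm F\mathbb R^{<\infty}$, so the finite-dimensional version is the right one to use.)

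Next, apply Theorem~\ref{thm:almost_fix} under hypothesis (ii) to the one-element family $\mathcal Y=\{Y\}$. This family consists of a geodesically complete CAT(0) space of (trivially uniformly) bounded geometry with $\dim Y\le n$, and $\Gamma$ has $\mathrm F\mathbb R^n$. The theorem yields $\kappa(S,\{Y\})>0$ such that every fixed-point-free action satisfies $\delta\ge\kappa$. Since $\delta(\rho)=0<\kappa$, the action $\rho$ cannot be fixed-point-free, so $\rho(\Gamma)$ has a global fixed point in $Y$. This is (2).

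The only delicate point is checking the hypotheses of Proposition~\ref{lem:strongly_fixed_dim}. That proposition is stated for $Y$ of dimension exactly $n$ with $\mathrm F\mathbb R^n$, which matches the present setting. Its proof also uses the existence of equivariant harmonic maps and \cite[Proposition 11]{izeki-karlsson}, which need finite dimension; this holds here. Beyond that, no further work should be required, since the substantial ingredients (the ultralimit rescaling and the compactness of the Tits boundary) are already packaged inside Theorem~\ref{thm:almost_fix}.
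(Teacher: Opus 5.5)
Your proposal is correct and matches the paper's proof: Proposition~\ref{lem:strongly_fixed_dim} turns zero drift into almost fixed points, and Theorem~\ref{thm:almost_fix} under hypothesis (ii), applied to the single space $Y$ as in Remark~\ref{remark-boundedgeometry}, upgrades them to a global fixed point. The paper leaves implicit the mutual exclusivity of the two alternatives, which you check explicitly, including that $\Gamma$ being finitely generated makes alternative (1) non-vacuous.
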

\begin{proof}
Suppose $(1)$ does not hold. That is, the drift $l_{\mu}(\rho)=0$. Then it follows from Proposition \ref{lem:strongly_fixed_dim} that $\Gamma$ has almost fixed points in $Y$. By Theorem \ref{thm:almost_fix} (see also the beginning of Remark \ref{remark-boundedgeometry}) we know that $\rho(\Gamma)$ fixes a point in $Y$, that is, $(2)$ follows. The proof is complete.
\end{proof}

Note the rigidity implication: if the orbit has sublinear growth, $d(y,\gamma (y))=o(||\gamma||_{S})$, then in fact it is bounded, $d(y,\gamma(y))<C$. When the action comes from a real finite-dimensional linear representation $(\pi,V)$, and $Y$ is the corresponding symmetric space $\mathrm{GL}(V)/\mathrm{O}(V)$, positive drift means positive top Lyapunov exponent (the fixed point property implies that the determinant has absolute value $1$) and fixing a point means that $\pi(\Gamma)$ lies in a compact subgroup.



We now prove 
\begin{cor}\emph{(Corollary \ref{cor:Lyapunov})}
Let $\mu$ be a symmetric, Borel probability measure on $SL_n(\mathbb{R})$ with finite second moment, i.e. $\int (\log (\max\{||g||,||g^{-1}||\}))^2 d\mu(g) < \infty $. Assume that the group generated by the support of $\mu$ is a finitely generated group with fixed point property for $\mathbb{R}^{n(n+1)/2-1}$, $n \geq 2$ and that it is unbounded in $SL_n(\mathbb{R})$. Then the top Lyapunov exponent $\lambda_{1}$ is strictly positive. 
\end{cor}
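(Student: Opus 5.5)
We apply the drift dichotomy, Theorem \ref{thm:F}, to the action of $SL_n(\mathbb{R})$ on the symmetric space $Y=SL_n(\mathbb{R})/SO(n)$, the space of positive definite matrices of determinant one with the invariant Riemannian metric. The first step is to record the properties of $Y$ needed by Theorem \ref{thm:F}:
\begin{itemize}
\item $Y$ is a complete Riemannian manifold of nonpositive curvature, hence $\cat{0}$.
\item It is geodesically complete.
\item Its dimension is $n(n+1)/2-1$.
\item It has bounded geometry, being homogeneous (the isometry group acts transitively, in particular cocompactly), so Remark \ref{rem:boundedgeometry} or a direct volume comparison applies.
\end{itemize}
Let $\Gamma$ be the finitely generated group generated by $\mathrm{supp}\,\mu$, and let $\rho$ be the restriction of the natural action to $\Gamma$. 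By hypothesis $\Gamma$ has property $\mathrm F\mathbb R^{n(n+1)/2-1}$, which is exactly the dimension hypothesis of Theorem \ref{thm:F}.

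Next we rule out alternative (2) of Theorem \ref{thm:F}. A global fixed point $gSO(n)$ means $\rho(\Gamma)\subset gSO(n)g^{-1}$, a compact group, contradicting unboundedness of $\Gamma$ in $SL_n(\mathbb{R})$. Hence alternative (1) holds: $l_\nu(\rho)>0$ for every symmetric probability measure $\nu$ on $\Gamma$ with finite second moment whose support generates $\Gamma$.

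There is one subtlety. Theorem \ref{thm:F} is phrased for measures on the abstract finitely generated group $\Gamma$, with moments taken in its word metric, while $\mu$ lives on $SL_n(\mathbb{R})$ and its moment condition is in terms of $\log\|g\|$. If $\mathrm{supp}\,\mu$ is finite, $\mu$ is trivially a finitely supported measure on $\Gamma$ and there is nothing to check. Otherwise I would revisit the proof chain of Theorem \ref{thm:F}, namely Theorem \ref{thm:IK23}, Proposition \ref{lem:strongly_fixed_dim} and Theorem \ref{thm:almost_fix}, and check that only the geometric second moment $\int d(o,go)^2\,d\mu(g)<\infty$ is used (for the Karlsson–Margulis and Izeki arguments). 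That moment is equivalent to the given one, since
\[
d(o,go)\asymp \log\max\{\|g\|,\|g^{-1}\|\}
\]
up to constants, where $o=SO(n)$. I expect this moment bookkeeping to be the main obstacle: Izeki's Theorem \ref{thm:IK23} is formulated with a finite second moment on $\Gamma$, and one must check it works for $\mu$ with possibly infinite support and a geometric moment.

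Finally we identify the drift with the Lyapunov exponent. With the metric normalized as $d(o,go)=\bigl(\sum_i(\log\sigma_i(g))^2\bigr)^{1/2}$ (up to a constant factor), where $\sigma_i(g)$ are the singular values of $g$, Kingman's theorem and the $L^1$ convergence of $\frac1m\log\sigma_i(Z_m)$ to $\lambda_i$ give
\[
l_\mu(\rho)=c\Bigl(\sum_i\lambda_i^2\Bigr)^{1/2}.
\]
Since $\det=1$ we have $\sum_i\lambda_i=0$, so positive drift forces some $\lambda_i\neq0$, and with $\lambda_1$ the largest this gives $\lambda_1>0$.
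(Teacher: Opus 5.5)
Your proposal is correct and follows the paper's proof. Both apply Theorem \ref{thm:F} to $SL_n(\mathbb{R})/SO(n)$, which has dimension $n(n+1)/2-1$, rule out the fixed-point alternative by unboundedness, and deduce $\lambda_1>0$ from the nonvanishing of the Lyapunov vector together with $\sum_i\lambda_i=0$. The word-metric versus geometric second-moment issue you flag is a genuine subtlety, but the paper does not address it; it applies Theorem \ref{thm:F} to $\mu$ directly, so your extra check goes beyond the paper's argument.
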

\begin{proof}
We apply Theorem \ref{thm:F} to the $\cat{0}$ space $SL_n(\mathbb{R})/SO_n(\mathbb{R})$. The group $SL_n(\mathbb{R})$ acts by isometries, and the group is unbounded if and only if there is no global fixed point. Denote by $\Gamma$ the group generated by the support of $\mu$. In view of the theorem, the drift is strictly positive. 
In view of $\det (g)=1$,
$$
\lambda_1+\lambda_2 + ...+ \lambda_n =0.
$$
Positive drift means that the Lyapunov vector is non-zero, and hence $\lambda_1 >0$ if the Lyapunov exponents are ordered from largest to smallest.
\end{proof}


\subsection{Geometric Berger-Wang identity}
We now use concepts and notation taken from Breuillard-Fujiwara  \cite{breuillard-fujiwara}. Let $S$ be a finite set of isometries of a metric space $X$. Let
$$
L(S):=\inf_{x\in X} \max_{s\in S} d(x,sx),
$$
which is a notion that has already played an important role above. Recall
$$
L_{\infty}(S)= \lim_{n\rightarrow \infty}L(S^n)/n
$$
and in particular $\lambda(g):=L_{\infty}(\{ g\})$. Finally, $\lambda(S):=\max_{s\in S}\lambda(s)$ and $\lambda_{\infty}(S)=\limsup\limits_{n\rightarrow \infty}\lambda(S^n)/n$.
Breuillard and Fujiwara say that the geometric Berger-Wang identity is satisfied if 
$$
\lambda_{\infty} (S) = L_{\infty}(S).
$$
They show that symmetric spaces of non-compact type, trees and $\delta$-hyperbolic spaces have the Berger-Wang identity for any finite set $S$.

{\begin{cor}\label{cor:BergerWang}
Let $Y$ be a geodesically complete $\cat{0}$ space with bounded geometry. Then any isometric action by a finitely generated torsion group $\Gamma$ on $Y$, which satisfies the geometric Berger-Wang identity for some symmetric finite generating set of $\Gamma$, must fix a point in $Y$.
\end{cor}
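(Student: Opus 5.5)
The approach is to show that, for a torsion group, the Berger--Wang identity forces $L_\infty(S)=0$, and then to upgrade this to a genuine fixed point with the machinery of this section. Let $S$ be the symmetric finite generating set of $\Gamma$ for which $\lambda_\infty(S)=L_\infty(S)$ holds.

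\textbf{Step 1: $\lambda_\infty(S)=0$.} Every element of $\Gamma$ has finite order. For $g\in\Gamma$ with $g^q=e$, the orbit $\{g^k y\}$ is finite, hence bounded. Therefore
\[
\lambda(g)=L_\infty(\{g\})=\lim_{n\to\infty} L(\{g^n\})/n\le \lim_{n\to\infty}\max_{0\le k<q} d(y,g^ky)/n=0 .
\]
Here we use that the set $\{g\}^n$ is just $\{g^n\}$ and that $L(\{h\})\le d(y,hy)$. It follows that $\lambda(S^n)=\max_{h\in S^n}\lambda(h)=0$ for every $n$, so $\lambda_\infty(S)=0$. By the geometric Berger--Wang identity, $L_\infty(S)=0$.

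\textbf{Step 2: drift zero, then almost fixed points.} Take $\mu$ to be the uniform symmetric measure on $S$. It has finite second moment and its support generates $\Gamma$. As in the proof of the proposition relating $L_\infty(S)$ to $\delta(\rho)$ above, we have $l_\mu(\rho)\le L_\infty(S)=0$. The space $Y$ has bounded geometry, hence is proper. By Schur's theorem, $\Gamma$ has the fixed-point property for all finite-dimensional Euclidean spaces. Proposition~\ref{lem:strongly_fixed} then yields $\delta(\rho)=0$, i.e. $\rho(\Gamma)$ has almost fixed points. Alternatively, since $Y$ is geodesically complete with bounded geometry, it is finite-dimensional by \cite{cavallucci-sambusetti}, and the finite-dimensional proposition above ($L_\infty(S)=0\iff\delta(\rho)=0$) gives the same conclusion directly.

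\textbf{Step 3: upgrade.} Apply Theorem~\ref{thm:almost_fix} to the family $\mathcal{Y}=\{Y\}$, under hypothesis (i). A fixed-point-free action would satisfy $\delta(\rho)\ge\kappa(S,\mathcal{Y})>0$, contradicting Step 2. Hence $\rho(\Gamma)$ fixes a point of $Y$.

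The only place needing care is Step 1: confirming that the Breuillard--Fujiwara notation $\lambda(g)=L_\infty(\{g\})$ really gives $\lambda(g)=0$ for torsion $g$, i.e. that $\{g\}^n=\{g^n\}$ and that bounded orbits give vanishing normalized displacement. This is straightforward. The rest is a direct chaining of results already established in the paper.
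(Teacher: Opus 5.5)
Your proof is correct and follows the paper's argument. The paper likewise shows $\lambda(g)=0$ for torsion $g$, hence $\lambda_\infty(S)=0$ and, by Berger--Wang, $L_\infty(S)=0$; it then bounds the drift by $\ell_\mu(\rho)\le L(S^n)/n\to 0$ and concludes via the drift dichotomy Theorem~\ref{thm:F}, whose proof is exactly your Steps~2--3 (zero drift gives almost fixed points, then Theorem~\ref{thm:almost_fix} upgrades them to a fixed point).
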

}
\begin{proof}
Since the group consists entirely of finite order elements, $\lambda(g)=0$ for every $g\in \Gamma$. Therefore for any finite set $S$, it holds that $\lambda(S^n)=0$ for every $n$, and hence also $\lambda_{\infty}(S)=0$. By the Berger-Wang assumption for a specific generating set $S$, we thus have that $L_{\infty}(S)=0$. This implies in particular that the drift of any probability measure $\mu$ supported on $S$ must be $0$, since
$$
\ell_{\mu}(\rho) \leq \frac{L(S^n)}{n} \rightarrow 0,
$$
as $n\rightarrow \infty$. The conclusion now follows from Theorem \ref{thm:F}.
\end{proof}

\subsection{Deducing the amenable alternative instead from the rigidity principle}
\label{ssec:an-altern-amen}

In this subsection, 
 we explain how the Kazhdan rigidity-principle Theorem \ref{thmD} implies a bounded geometry version of Theorem \ref{thm:amenable-intro} although we seem to need to assume that the space is geodesically complete.
Since a geodesically complete $\cat{0}$ space with bounded geometry is automatically finite-dimensional by  \cite[Theorem 4.9]{cavallucci-sambusetti}, what we prove here is a slightly weaker version of Theorem \ref{thm:amenable-intro}.
However, we think that this implication is interesting and worthwhile to record here, since the geodesic completeness could perhaps one day be removed with an additional idea.

First, we make the following preparation.
\begin{prop}\label{afp-amg}
Let $\Gamma$ be a finitely generated amenable group with
 fixed-point property for all finite-dimensional Euclidean spaces, and let $Y$ be a complete
 $\cat{0}$ space with bounded geometry. Then any isometric action of $\Gamma$ on $Y$ has almost fixed points.    
\end{prop}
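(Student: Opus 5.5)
The plan is to reduce the statement to Proposition~\ref{lem:strongly_fixed}. That proposition already gives almost fixed points for groups with the fixed-point property for all finite-dimensional Euclidean spaces acting on complete $\cat{0}$ spaces of bounded geometry, provided the drift vanishes. So the only thing to establish is that $l_\mu(\rho)=0$ for some symmetric generating probability measure $\mu$ on $\Gamma$ with finite second moment. Amenability is exactly what will supply this.

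Fix a finite symmetric generating set $S$ and let $\mu$ be the uniform measure on $S$; it has finite (even exponential) moments. Suppose for contradiction that $l_\mu(\rho)>0$. Since $Y$ has bounded geometry it is proper, so Karlsson--Margulis \cite{karlsson-margulis} applies. For almost every sample path $Z_n=X_1\cdots X_n$, the orbit $\rho(Z_n)y$ converges to a boundary point $\xi(\omega)\in\partial Y$, and does so along a geodesic ray at linear rate $l_\mu(\rho)$, with $b_{\xi}(\rho(Z_n)y,y)\to-\infty$ as used in the proof of Proposition~\ref{prop:bdd_geom_almost_fix}. This yields a $\Gamma$-equivariant measurable boundary map $\varphi$ from the Poisson boundary $(\partial_P\Gamma,\nu)$ to $\partial Y$.

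Next use amenability. By Kaimanovich--Vershik and Rosenblatt, an amenable group carries a symmetric, finitely supported or at least finite-moment, generating measure with trivial Poisson boundary. One has to choose $\mu$ this way from the start rather than the uniform measure on $S$. It has finite second moment, so Proposition~\ref{lem:strongly_fixed} remains applicable with this $\mu$. With trivial Poisson boundary the map $\varphi$ is essentially constant, so the limit point $\xi$ is deterministic. Stationarity of $\delta_\xi$ then forces $\rho(\Gamma)$ to fix $\xi$, by the same argument as in Proposition~\ref{prop:bdd_geom_almost_fix}. The fixed-point property for $\R^1$ rules out nontrivial homomorphisms $\Gamma\to\R$, so the Busemann character of $\xi$ vanishes. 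Hence $b_\xi(\rho(g)y,y)=0$ for all $g$, contradicting $b_\xi(\rho(Z_n)y,y)\to-\infty$. Therefore $l_\mu(\rho)=0$, and Proposition~\ref{lem:strongly_fixed} gives $\delta(\rho)=0$.

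The main obstacle is the existence of a Liouville measure with finite second moment on an arbitrary amenable group. Rosenblatt's measure is symmetric and generating, but its moments are not controlled, and finite-moment Liouville measures need not exist (some amenable groups have no finitely supported Liouville measure). If this fails, the fallback is a different route to zero drift. Amenability gives Følner sets, and one would average Busemann-type horofunctions, or use Adams--Ballmann \cite{AB98}: an amenable action on a proper $\cat{0}$ space fixes a point of $\partial Y$ or stabilizes a flat. In the flat case the Euclidean fixed-point property gives a fixed point directly. In the boundary case one would run the ultralimit reduction of Lemma~\ref{lem:bounded geometry}, which terminates by bounded geometry, and apply Adams--Ballmann again on the reduced space $Z_0$. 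That space has no fixed point at infinity, so $\Gamma$ preserves a flat of $Z_0$ and hence fixes a point there. But Lemma~\ref{lem:bounded geometry}(3) then forces the original $\delta(\rho)$ to be $0$. I expect this second route to be the cleaner one, and would try it first.
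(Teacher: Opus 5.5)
Your fallback route is the paper's proof, and it is complete. Adams--Ballmann \cite{AB98} on the proper space $Y$ gives two cases. If $\Gamma$ preserves a flat, the Euclidean fixed-point property yields a fixed point. If $\Gamma$ fixes a point of $\partial Y$, you apply Lemma~\ref{lem:bounded geometry}, which is available because $\mathrm{F}\R^1$ rules out nontrivial homomorphisms $\Gamma\to\R$. It produces $Z_0$ of bounded geometry, hence proper, with no fixed point at infinity. A second application of Adams--Ballmann then gives an invariant flat, hence a fixed point in $Z_0$, and item (3) of the lemma gives $\delta(\rho)\le\delta(\rho_0)=0$.

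You should make this the proof and discard the Liouville-measure route rather than lead with it. That route needs a symmetric generating Liouville measure with finite second moment, and such a measure does not exist for general amenable groups. For example, on $\Z/2\wr\Z^3$, for any such measure the lamp configuration stabilizes along the transient projected walk on $\Z^3$, and its limit is non-deterministic. So the Poisson boundary is nontrivial, and that route has a genuine gap for general amenable $\Gamma$. Its remaining steps (Karlsson--Margulis, deterministic limit point, vanishing Busemann character) would go through whenever such a measure happens to exist.
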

\begin{proof}
Since $\Gamma$ is amenable, according to \cite[Main Theorem]{AB98} of Adams-Ballmann, $\Gamma$ either stabilizes a flat subspace or fixes a boundary point $\xi \in \partial Y$. By our assumption, for the former case, $\rho(\Gamma)$ has a fixed point in $Y$, in particular, has almost fixed points; for the latter case, by Lemma \ref{lem:bounded geometry} there exists a CAT(0) space $Z_0$ of bounded geometry together with an isometric action of $\Gamma$ such that $\Gamma$ does not fix any infinity point in $\partial Z_0$ and
\begin{equation*}
 \inf_{x \in Y} \max_{s \in S} d(x, s(x))\leq  \inf_{x \in Z_0} \max_{s \in S} d(x, s(x)), 
\end{equation*}
where $S$ is some finite generating set of $\Gamma$. Applying again \cite[Main Theorem]{AB98} on $\Gamma$ to $Z_0$, we see that $\Gamma$ fixes a flat subspace in $Z_0$. By our assumption, the group $\Gamma$ must fix a point in $Z_0$.  Thus, we have
\begin{equation*}
 \inf_{x \in Y} \max_{s \in S} d(x, s(x))\leq  \inf_{x \in Z_0} \max_{s \in S} d(x, s(x))=0. 
\end{equation*}
This also implies that $\Gamma$ has almost fixed points in $Y$.
\end{proof}

Now we are ready to prove the \emph{a posteriori} slightly weaker version of Theorem \ref{thm:amenable-intro}.

 \begin{thm} \emph{(Amenable alternative, bounded geometry version)} \label{thm:Bboundedgeom}
  Let $\Gamma$ be a finitely generated amenable group. Then exactly one of the following holds:
 \begin{itemize}
  \item[{\rm (1)}]  every isometric action of $\Gamma$ on a geodesically complete $\cat{0}$ space with bounded geometry has a global fixed point;
  \item[{\rm (2)}] $\Gamma$ acts isometrically on  some $\R^n$ $(n\geq 1)$ without a global fixed point. 
 \end{itemize}
 \end{thm}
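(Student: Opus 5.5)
The plan is to reduce the statement to Proposition~\ref{afp-amg} together with the single-space case of Theorem~\ref{thm:almost_fix}, using the structure of the preceding subsection. First I check that (1) and (2) are mutually exclusive. The space $\R^n$ with $n\geq 1$ is itself a geodesically complete $\cat{0}$ space with bounded geometry: the $(r,\varepsilon)$-capacities of Euclidean balls are finite and independent of the center. So a fixed-point-free action on some $\R^n$ immediately violates (1). This means only the implication ``not (2) $\Rightarrow$ (1)'' needs real work.

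Now assume (2) fails, so $\Gamma$ has the fixed-point property for every finite-dimensional Euclidean space. Let $\rho:\Gamma\to\isom{Y}$ be an isometric action, where $Y$ is geodesically complete with bounded geometry. I would then argue in two steps.

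\begin{itemize}
\item[(a)] \emph{Almost fixed points.} By Proposition~\ref{afp-amg}, which uses amenability via Adams--Ballmann and the reduction of Lemma~\ref{lem:bounded geometry}, the action $\rho$ has almost fixed points, that is, $\delta(\rho)=0$.
\item[(b)] \emph{Upgrading to a genuine fixed point.} Apply Theorem~\ref{thm:almost_fix} in case (i), with $\mathcal{Y}=\{Y\}$. A single space of bounded geometry trivially has uniformly bounded geometry, and geodesic completeness is assumed. The theorem gives a constant $\kappa(S,\{Y\})>0$ with $\delta(\rho)\geq\kappa$ for every fixed-point-free action on $Y$. Since $\delta(\rho)=0$, the action $\rho$ is not fixed-point-free, so it has a global fixed point. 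This is the observation already made in Remark~\ref{remark-boundedgeometry}.
\end{itemize}

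Since $\rho$ and $Y$ were arbitrary, (1) holds.

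The only delicate point I expect is checking that the hypotheses of the cited results are literally met. Proposition~\ref{afp-amg} needs only completeness and bounded geometry; completeness is implicit in the setting, and bounded geometry also gives properness, as noted after Definition~\ref{defn:bdd_geometry}. Theorem~\ref{thm:almost_fix} in case (i) needs $\mathrm F\R^{<\infty}$, which is exactly the negation of (2). The substantive difficulty, producing a flat or a boundary point and then descending in capacity, is already absorbed into Lemma~\ref{lem:bounded geometry} and Proposition~\ref{prop:tits_boundary}, so no further obstacle should arise. As a final remark, I would note that the finite-dimensionality result of Cavallucci--Sambusetti is not needed here.
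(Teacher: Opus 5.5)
Your proposal is correct and matches the paper's proof essentially verbatim. Mutual exclusivity holds because a fixed-point-free Euclidean action is already an action on a geodesically complete $\cat{0}$ space of bounded geometry; the nontrivial direction combines Proposition~\ref{afp-amg} (almost fixed points) with Theorem~\ref{thm:almost_fix}(i) applied to the single-space family $\mathcal{Y}=\{Y\}$, exactly as indicated in Remark~\ref{remark-boundedgeometry}. Your hypothesis checks are also accurate: properness follows from bounded geometry, $\mathrm F \mathbb R^{<\infty}$ is precisely the negation of (2), and no finite-dimensionality input is needed.
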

 \begin{proof}
Suppose $(2)$ does not hold. That is, the group $\Gamma$ has fixed-point property for all finite-dimensional Euclidean spaces. Then $(1)$ follows from Proposition \ref{afp-amg} and Theorem \ref{thm:almost_fix} (see also the beginning of Remark \ref{remark-boundedgeometry}). On the other hand, a fixed-point-free Euclidean action is itself an action on a geodesically complete CAT(0) space of bounded geometry. The proof is complete.
\end{proof}


\section*{Acknowledgments}
We thank Emmanuel Breuillard for pointing out the algebraic characterization of finitely generated amenable groups acting without a global fixed point on Euclidean spaces.

\section*{Funding}
The second-named author was supported by JSPS KAKENHI Grant Number
JP25K00910. The fourth-named author was supported by the Swiss NSF Grants 200020-200400 and 200021-212864, and by the Swedish Research Council Grant 104651320.  The fifth-named author is partially supported by the National Key R \& D Program of China (2025YFA1017500) and NSFC grants No. 12361141813 and 12425107. 


\bibliography{torsion}{}
\bibliographystyle{plain}
\end{document}